\numberwithin{equation}{subsection}
\newcommand{\ra}{\rightarrow}
\newcommand{\lra}{\longrightarrow}
\newcommand{\p}{\prime}
\newcommand{\pt}{\partial}
\newcommand{\al}{\alpha}
\newcommand{\s}{\sigma}
\newcommand{\vp}{\varphi}
\newcommand{\lam}{\lambda}
\newcommand{\q}{\theta}
\newcommand{\dt}{\delta}
\newcommand{\Zbb}{\mathbb{Z}}
\newcommand{\Pbb}{\mathbb{P}}
\newcommand{\Cbb}{\mathbb{C}}
\theoremstyle{plain} 
\newtheorem{THM}{Theorem}[section]
\newtheorem{DEF}[THM]{Definition}
\newtheorem{EX}[THM]{Example}
\newtheorem{CON}[THM]{Construction}
\newtheorem{PROP}[THM]{Proposition}
\newtheorem{LEM}[THM]{Lemma}
\newtheorem{COR}[THM]{Corollary}
\newtheorem{REM}[THM]{Remark}
\newtheorem*{SNS}{Supermanifold Non-Splitting Theorem}
\newcommand{\bt}{\bullet}
\newcommand{\Aut}{\mathrm{Aut}}
\newcommand{\Oc}{\mathcal{O}}
\newcommand{\red}{\mathrm{red}}
\newcommand{\Hom}{\mathrm{Hom}}
\newcommand{\Jc}{\mathcal{J}}
\newcommand{\Gc}{\mathcal{G}}
\newcommand{\Fc}{\mathcal{F}}
\newcommand{\Abb}{\mathbb{A}}
\newcommand{\Xfr}{\mathfrak{X}}
\newcommand{\Ec}{\mathcal{E}}
\newcommand{\Yfr}{\mathfrak{Y}}
\newcommand{\Ic}{\mathcal{I}}
\newcommand{\Qcl}{\mathcal{Q}}
\definecolor{airforceblue}{rgb}{0.36, 0.54, 0.66}
\definecolor{burgundy}{rgb}{0.5, 0.0, 0.13}
\definecolor{majorelleblue}{rgb}{0.38, 0.31, 0.86}
\definecolor{darkblue}{rgb}{0.0, 0.0, 0.55}
\def\dashmapsto{\mathrel{\mapstochar\dashrightarrow}}
\title[Proj. Supersp. Var., Supersp. Quad. and Non-Splitting]{Projective Superspace Varieties, Superspace Quadrics and Non-Splitting
\\}
\author{\small Kowshik Bettadapura}
\date{}
\begin{document}
\maketitle

\begin{abstract} 
This article is a continuation of a previous article which concerned the splitting problem for subspaces of superspaces. We begin with a general account of projective superspaces. Subsequently, we specialise to subvarieties of `positive' projective superspaces. Our main result is: positive, projective superspaces are `normal', in a sense we define. Then, among others, our main application is: smooth, non-reduced, superspace quadric hypersurfaces are non-split.
\\\\
\emph{Mathematics Subject Classification}. 14M30, 32C11, 58A50
\\
\emph{Keywords}. Complex superspaces, superspace varieties, obstruction theory.
\end{abstract}

\setcounter{tocdepth}{1}
\tableofcontents

\onehalfspacing

\section{Introduction}

\subsection{Motivation I: Mirror Superspaces}
In the paper by Sethi \cite{SETHI} certain constructions of superspaces were proposed as mirrors to rigid, K\"ahler manifolds appearing in Landau-Ginzberg models. This idea was explored further by Aganagic and Vafa in \cite{AGAVAFA} where the superspace mirror of the projective superspace $\Pbb^{3|4}_\Cbb$ was derived as a quadric in $\Pbb^{3|3}_\Cbb\times \Pbb^{3|3}_\Cbb$. The objective of this article is \emph{not} to study this derivation of mirror superspaces, but rather to comment on the superspaces so derived. We refer to \cite{NOJAMIRROR} where the mirror map for superspaces, among other topics, are studied in more detail.
\\\\
Generally speaking, mirror symmetry relates structures on one space $M$ with structures on another space $\widehat M$, its mirror. For instance, the symplectic structure on $M$ might be suitably interchanged with the complex structure on $\widehat M$. In analogy we can ask, what structures might be interchanged in the mirror symmetry between superspaces? In \cite{BETTEMB} it was proposed, under the mirror symmetry detailed by Sethi; and by Aganagic and Vafa, that the K\"ahler parameter ought to be interchanged with the obstruction to splitting the mirror superspace. This article is hence, in part, an effort to further justify this proposition. We will argue that a large class of superspace varieties obtained in the above-mentioned articles are non-split, i.e., have non-vanishing obstruction class to splitting.

\subsection{Motivation II: Examples}
A well known example of a non-split superspace is the superspace quadric $Q\subset \Pbb^{2|2}_\Cbb$. In homogeneous coordinates $(x^1, \ldots, \q_2)$ it is given by the locus: $(x^1)^2 + (x^2)^2 + (x^3)^2 + \q_1\q_2 = 0$. Arguments showing $Q$ is non-split date to the works of Berezin in \cite{BER}, Manin in \cite{YMAN}, Green in \cite{GREEN} and Onishchik and Bunegina in \cite{ONIBUG}. Another argument, motivated by the observations of Donagi and Witten in \cite{DW1}, was given in \cite{BETTEMB}. More generally, it is natural to wonder when subvarieties given by loci $g + h\q^k = 0$ in projective superspace will be non-split.\footnote{by $h\q^k$ it is meant $\sum h^{i_1, \ldots, i_k} \q_{i_1}\cdots \q_{i_k}$, the sum being over (ordered) multi-indices $(i_1, \ldots, i_k)$.} Adequately resolving this problem could lead to a number of interesting examples of non-split superspaces. 
\\\\
In general, it is a difficult problem to determine whether an analytic superspace is split or not. In \cite{BETTEMB} the splitting problem was studied by reference to embeddings. That is, if we want to study the splitting problem for a superspace $\Yfr$, one method would be to embed it in a split superspace $\Xfr$. One can then study the splitting problem of $\Yfr$ relative to that of $\Xfr$. This perspective was applied in \cite{BETTEMB} to study the splitting problem for certain superspace extensions of rational normal curves. In this article we apply this perspective to subvarieties of projective superspaces more generally.  

We summarise the contents and main results of this article below.

\subsection{Outline and Main Results}
This paper can be broadly divided into three parts, excluding the introductory material in Section \ref{rhf784gf74hf98j0jf309}. The first part concerns the geometry of projective superspaces. The second concerns the splitting problem for subspaces therein, referred to as projective superspace varieties. The third part concerns applications. While the material in the first part of the paper might be well known, the author could not find definitive references for some of the statements made and so they are reproduced here. 

\subsubsection*{Part I: Projective Superspaces}
A well known result, dating at least to Manin in \cite{YMAN}, is that the projective superspace $\Pbb^{m|n}_\Cbb$ is split with structure sheaf $\Oc_{\Pbb_\Cbb^{m|n}}\cong \wedge^\bt \big(\oplus^n\Oc_{\Pbb^m_\Cbb}(-1)\big)$. In Section \ref{rhf87g847hg4899j94jg0} we consider a generalisation to a certain class of weighted projective superspaces $\Pbb^{m|n}_\Cbb(1|\vec b)$ described in Construction \ref{rf894hf894jf80jf09jf0}, where $\vec b = (b_1, \ldots, b_n)$ is an $n$-tuple of integers. In Theorem \ref{rhhf8h98fh93893j3jf0} we show that $\Pbb^{m|n}_\Cbb(1|\vec b)$ is a split superspace with structure sheaf $\Oc_{\Pbb^{m|n}_\Cbb(1|\vec b)}\cong \wedge_{\Oc_{\Pbb^m_\Cbb}}^\bt \big(\oplus_j\Oc_{\Pbb^m_\Cbb}(-b_j)\big)$.

\subsubsection*{Part II: Splitting Projective Superspace Varieties}
By Theorem \ref{rhhf8h98fh93893j3jf0}, projective superspace varieties are subspaces of \emph{split} superspaces. This allows us to apply some of the theory developed in \cite{BETTEMB}. In Theorem \ref{rhf74hf98hf98hf803jf0} we obtain a general characterisation applicable to subvarieties of all weighted projective superspaces $\Pbb^{m|n}_\Cbb(\vec a|\vec b)$ (c.f., Remark \ref{rhf894hf84jf09j4444}), being: when the weighting is `positive', i.e., when $\vec b$ is a tuple of positive integers, then any `homogeneously non-reduced', projective superspace variety (see Definition \ref{gf784g749h89jf0j9033f}) will be `homogeneously non-split' (see Definition \ref{rfh8hf984f8jf0j30}). Roughly put, this means one cannot eliminate the odd variables defining the subvariety (i.e., one cannot split the variety) by automorphisms of the homogeneous coordinate ring alone. As a complement to our study of projective superspaces, we present a short study of their automorphisms in Appendix \ref{g6df36d873dh983hd09j39}. We argue in Theorem \ref{rfh97gf97hf83h0fj390} that certain automorphisms of certain projective superspaces over the projective line can be identified with the general linear group. 
\\\\
In Section \ref{rfb48fg78f78hf983f80f} we establish our main theoretical result in this article. We introduce the notion of `normal embeddings' of superspaces in Definitions \ref{rfj9u4hf89hf89j09fj30} and \ref{cnvjbbiufnido}; Lemma \ref{uiefh98hf983fj309jf03} then clarifies the relation between normal embeddings and non-splitting. Our main result is Theorem \ref{rhf87f9h3f98h3fh03} where we show: any `positive', projective superspace variety is $k$-normal for all $k$, i.e., `normal'.

\subsubsection*{Part III: Applications}
If we are given a projective, superspace variety with defining polynomial equations, how can we confirm whether or not it splits? In order to address this question we give, in Section \ref{rgf87gf87f9h30fj309j},  a more detailed account of the principles underlying the notions and results of the previous section. Section \ref{r8f93hf983f8930} is then concerned with applications to superspace quadrics, which we define generally in Definition \ref{rohf98h89fj039jf093}. Our main result here is Theorem \ref{rfh84hf98hf803jf03} where it is shown: any smooth, non-reduced, superspace quadric hypersurface is non-split. With this result we deduce, in Example \ref{rfh84fg78hf893hf09j903}, non-splitness of a class of mirror superspaces obtained by Sethi in \cite{SETHI}. We next turn our attention to quadrics in products of projective superspaces. Following the work of Lebrun and Poon in \cite{LEBRUN} we present a superspace variant of the classical Segre embedding of products of projective spaces in Theorem \ref{rfh784gf7f9838fj309}. The proof is deferred to Appendix \ref{lfkfkjfhgrye}. Non-splitting of non-reduced, quadric hypersurfaces in products of positive, projective superspaces then follows naturally (see Corollary \ref{djofheofjoieoe}). We conclude the article with Example  \ref{rf784gf87hf83j03j} where non-splitness of the mirror superspace to $\Pbb^{3|4}_\Cbb$, derived by Aganagic and Vafa in \cite{AGAVAFA}, is deduced.

\section{Preliminaries}
\label{rhf784gf74hf98j0jf309}

\subsection{Analytic Superspaces}
We follow the conventions of \cite{GRAUREM}. Let $X$ be a Hausdorff, topological space and $\Oc_X$ a sheaf of commutative, local rings, locally isomorphic to holomorphic functions on $\Cbb^{m+1}$. Then the locally ringed space $(X, \Oc_X)$ defines an analytic space. Points $P\in X$ at which $(X, \Oc_X)$ is not smooth are referred to as singular. Otherwise, if $(X, \Oc_X)$ is smooth at all points $P\in X$ it is referred to as a complex manifold. If we now fix a locally free sheaf of $\Oc_X$-modules $\Ec$, we can form the locally ringed space $(X, \wedge^\bt_{\Oc_X}\Ec)$. This is the prototypical example of an \emph{analytic superspace} and is referred to as a \emph{split superspace}. It depends essentially on $(X, \Oc_X)$ and $\Ec$. Our convention in this article is to set $T_{X, -}^* := \Ec$ and to denote by $S(X, T^*_{X, -})$ the split superspace $(X, \wedge^\bt T^*_{X,-})$.
\\\\
The exterior algebra is an example of a supercommutative ring. In analogy then with analytic spaces, an analytic superspace is a locally ringed space $\Xfr = (X, \Oc_\Xfr)$ where $\Oc_\Xfr$ is a sheaf of supercommutative rings on $X$ that is locally isomorphic to $\wedge^\bt T^*_{X, -}$, for some locally free sheaf $T^*_{X,-}$. We say $S(X, T^*_{X,-})$ is the \emph{split model} associated to $\Xfr$ or that $\Xfr$ is \emph{modelled} on $S(X, T^*_{X, -})$. The superspace $\Xfr$ is itself said to be \emph{split} if it is isomorphic to its split model $S(X, T^*_{X,-})$. 

Otherwise, $\Xfr$ is non-split.

\begin{DEF}\label{rfh78hf98hf8j309}
\emph{Let $\Xfr$ be an analytic superspace modelled on $S(X, T^*_{X, -})$. The underlying analytic space $(X, \Oc_X)$ is called the \emph{reduced space} of $\Xfr$; and the locally free sheaf $T^*_{X, -}$ is referred to as the \emph{odd cotangent sheaf}.}
\end{DEF}

\begin{DEF}
\emph{A pair $(X, T^*_{X, -})$ comprising an analytic space and a locally free sheaf is referred to as a \emph{model}. If $\Xfr$ is modelled on the split superspace $S(X, T^*_{X, -})$, then the pair $(X, T^*_{X, -})$ is said to \emph{model} $\Xfr$.}
\end{DEF}

\noindent
If we wish to refer directly to a given superspace $\Xfr$ we will say $\Xfr$ is a superspace \emph{over} $X$ or \emph{over} $(X, T^*_{X, -})$. We remark here that any analytic space $(X, \Oc_X)$ can be thought of trivially as a superspace upon writing $\Oc_X = \wedge^\bt_{\Oc_X}T^*_{X, -}$, where $T_{X, -}^*$ has rank zero. 

\begin{DEF}\label{r78gf874hf89jf0j3}
\emph{If $\Xfr$ is isomorphic to its reduced space, it is called \emph{reduced}.}
\end{DEF}

\noindent
The terminology in Definition \ref{rfh78hf98hf8j309} is justified by the following natural constructions. For $\Xfr = (X, \Oc_\Xfr)$ an analytic superspace modelled on $(X, T^*_{X, -})$ we have a natural surjection of sheaves $\Oc_\Xfr \twoheadrightarrow \Oc_X$. The kernel $\Jc$ is referred to as the \emph{fermionic ideal}. The structure sheaf $\Oc_X$ of $X$ is therefore recovered from $\Oc_\Xfr$ upon quotienting out the fermionic ideal $\Jc$. This gives an embedding of superspaces $X\subset \Xfr$. Similarly, we recover $T_{X, -}^*$ from $\Jc$ as the quotient $\Jc/\Jc^2$, i.e., $T_{X, -}^* = \Jc/\Jc^2$. Accordingly, Berezin in \cite{BER} refers to $T_{X, -}^*$ as the \emph{conormal sheaf}. We prefer `odd cotangent sheaf' since, in local coordinates $(x|\q)$ for $\Xfr$, local sections of $T_{X, -}^*$ can be represented by K\"ahler differentials $d\q$.

\begin{DEF}\label{rhf874g87h398f30j03}
\emph{A \emph{complex supermanifold} is an analytic superspace whose reduced space is a smooth, i.e., a complex manifold.}
\end{DEF}

\begin{DEF}
\emph{Let $\Xfr$ be complex supermanifold modelled on $S(X, T^*_{X, -})$. The \emph{dimension} of $\Xfr$ is defined by the dimension of $X$ and the rank of $T^*_{X, -}$. Accordingly, we write: $\dim\Xfr = (\dim X\vert\mathrm{rank}~T^*_{X, -})$.}
\end{DEF}

\noindent 
A preliminary classification of analytic superspaces $\Xfr$ begins by identifying its split model. This means characterising its reduced space $(X, \Oc_X)$ and its odd cotangent sheaf $T^*_{X,-}$. Subsequently, one looks to confirm whether $\Xfr$ is split or not. This is the starting point for `obstruction theory' for superspaces.

\subsection{Obstruction Sheaves}
The splitting problem for analytic superspaces involves studying the cohomology of what are termed \emph{obstruction sheaves}. To any model $(X, T^*_{X, -})$ we can assign the obstruction sheaf $\Qcl_{T^*_{X, -}}$. This is a $\Zbb$-graded sheaf of $\Oc_X$-modules, non-zero in degrees $0, \ldots, n$ where $n = \mathrm{rank}~T^*_{X, -}$. In even degree $2k$ we have an isomorphism,
\begin{align}
\Qcl^{(2k)}_{T^*_{X, -}} \cong  T_X\otimes \wedge^{2k}T^*_{X, -}.
\label{4fh84hf983hf89f30}
\end{align}
for $T_X$ the tangent sheaf of $X$. The obstruction sheaves $\Qcl_{T^*_{X, -}}$ appear in the paper by Green in \cite{GREEN} and the works of Berezin, collected in \cite{BER}. The obstruction classes to splitting superspaces $\Xfr$, modelled on $S(X, T^*_{X, -})$, are housed in $H^1(X, \Qcl_{T^*_{X, -}})$. However, not every class therein need be an obstruction to splitting some superspace. This point is explored further in \cite{EASTBRU, OBSTHICK, BETTPHD}. It is shown that $H^1(X, \Qcl_{T^*_{X, -}})$ can be interpreted as a space of thickenings which themselves can, in a suitable sense, be obstructed. 

\begin{REM}
\emph{The obstruction sheaves in odd degree also play an important role in studying the splitting problem. In this article however only the even degree components of $\Qcl_{T^*_{X, -}}$ will be relevant.}
\end{REM}

\noindent
For the ultimate applications in this paper we will make use of the following classical result, which we will refer to as the `Supermanifold Non-Splitting Theorem'. One can find it in the works of Berezin in \cite{BER}. Another proof was given in \cite[Appendix A]{BETTHIGHOBS}. This theorem played an essential role in the deduction of the non-splitness of supermoduli spaces by Donagi and Witten in \cite{DW1}. 

It is as follows:

\begin{SNS}
Let $\Xfr$ be a complex supermanifold modelled on $(X, T^*_{X, -})$. Suppose it admits an atlas which defines a non-vanishing obstruction in degree $2$, i.e., a non-zero element in $H^1\big(X, \Qcl^{(2)}_{T^*_{X, -}}\big)$. Then $\Xfr$ is non-split.\qed
\end{SNS}

\section{Projective Superspaces}
\label{rhf87g847hg4899j94jg0}

\subsection{Split Models over Projective Space}
The following is a classical construction of projective superspace which appears in \cite[p. 195]{YMAN} as part of a more general construction of superspace Grassmannians. It is directly analogous to the construction of projective space as a quotient of Euclidean space by the action of the multiplicative group.

\begin{CON}\label{rhf894hf983h8309j03}
Consider $(m+1|n)$-superspace $\Cbb^{m+1|n}$ endowed with global coordinates $(x^\mu|\q_a)$. The group of units $\Cbb^\times$ acts on $\Cbb^{m+1|n}$ by,
\[
(x^\mu|\q_a) \stackrel{\lam}{\longmapsto} (\lam x^\mu|\lam\q_a)
\]
for $\lam\in \Cbb^\times$. The quotient $\big(\Cbb^{m+1|n}-\{(0|0)\}\big)/\Cbb^\times$ is referred to as complex projective superspace and is denoted $\Pbb_\Cbb^{m|n}$.
\end{CON}

\noindent
In \cite{BETTEMB} it was stated without proof that $\Pbb_\Cbb^{m|n}$ is the split model $S(\Pbb^m_\Cbb, \oplus^n \Oc_{\Pbb^m_\Cbb}(-1)\big)$. This itself is not a new result. We consider a generalisation below.

\begin{CON}\label{rf894hf894jf80jf09jf0}
On $\Cbb^{m+1|n}$ with coordinates $(x^\mu|\q_a)$ consider a weighted action of $\Cbb^\times$ as follows,
\begin{align}
(x^\mu|\q_a) \stackrel{\lam}{\longmapsto} (\lam x^\mu|\lam^{b_a}\q_a)
\label{rfh984hf894f90j0}
\end{align}
for integers $b_1, \ldots, b_n$ and $\lam\in \Cbb^\times$. The quotient of $\Cbb^{m+1|n}-\{(0|0)\}$ by the above action will be denoted $\Pbb_\Cbb^{m|n}(1|b_1, \ldots, b_n)$. For notational convenience, let $\vec b = (b_1, \ldots, b_n)$ and set $\Pbb_\Cbb^{m|n}(1|b_1, \ldots, b_n) = \Pbb_\Cbb^{m|n}(1|\vec b)$. As with $\Pbb^{m|n}_\Cbb$ in Construction $\ref{rhf894hf983h8309j03}$, we refer to $ \Pbb_\Cbb^{m|n}(1|\vec b)$ as `complex projective superspace'. 
\end{CON}

\begin{THM}\label{rhhf8h98fh93893j3jf0}
Let $\vec b = (b_1, \ldots, b_n)$ be an $n$-tuple of integers. There exists an isomorphism of supermanifolds:
\[
\Pbb_\Cbb^{m|n}(1|\vec b)
\cong
S\big(\Pbb_\Cbb^m,
\oplus_j \Oc_{\Pbb_\Cbb^m}(-b_j)
\big)
\]
\end{THM}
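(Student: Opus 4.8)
\emph{Proof strategy.} The plan is to realise $\Pbb^{m|n}_\Cbb(1|\vec b)$ as a quotient whose structure sheaf is a sheaf of $\Cbb^\times$-invariant functions, and then to extract the split structure by expanding invariants in the odd coordinates. First I would pin down the reduced space: restricting the weighted action \eqref{rfh984hf894f90j0} along $\Cbb^{m+1}\subset\Cbb^{m+1|n}$ (i.e.\ setting the odd coordinates to zero) recovers the ordinary scaling action of $\Cbb^\times$ on $\Cbb^{m+1}$, which is free and proper on $\Cbb^{m+1}-\{0\}$ with quotient $\Pbb^m_\Cbb$. Since $\Cbb^\times$ is an ordinary (bosonic) complex Lie group acting freely on the body of $\Cbb^{m+1|n}-\{0\}$, the quotient superspace exists, has reduced space $\Pbb^m_\Cbb$, and its structure sheaf is the subsheaf of $\Cbb^\times$-invariants in the pushforward of $\Oc_{\Cbb^{m+1|n}}$ along the projection $\pi\colon\Cbb^{m+1}-\{0\}\to\Pbb^m_\Cbb$. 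This is made explicit on the standard charts $U_\mu=\{x^\mu\neq0\}$: over the cone $V_\mu=\pi^{-1}(U_\mu)$ the coordinate $x^\mu$ is a unit, so $\{x^\mu=1\}$ is a global slice and the quotient chart carries even coordinates $y^\rho_{(\mu)}=x^\rho/x^\mu$ and odd coordinates $\eta^{(\mu)}_a=\q_a/(x^\mu)^{b_a}$.

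The computation is then a single expansion. Over $U_\mu$, a section of $\pi_*\Oc_{\Cbb^{m+1|n}}$ is a polynomial $f=\sum_{I\subseteq\{1,\dots,n\}}f_I(x)\,\q^I$ in the odd coordinates, where $\q^I:=\prod_{a\in I}\q_a$ and each $f_I$ is holomorphic on $V_\mu$; since the action multiplies $\q^I$ by $\lam^{\sum_{a\in I}b_a}$, invariance under $\Cbb^\times$ is equivalent to each $f_I$ being homogeneous of degree $-\sum_{a\in I}b_a$ on $V_\mu$, i.e.\ a section over $U_\mu$ of $\bigotimes_{a\in I}\Oc_{\Pbb^m_\Cbb}(-b_a)$. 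Summing over $I$ therefore gives, over each $U_\mu$ and compatibly with restriction to overlaps, an isomorphism
\begin{align*}
\Oc_{\Pbb^{m|n}_\Cbb(1|\vec b)}
&\cong \bigoplus_{I\subseteq\{1,\dots,n\}}\ \bigotimes_{a\in I}\Oc_{\Pbb^m_\Cbb}(-b_a) \\
&= \wedge^\bt_{\Oc_{\Pbb^m_\Cbb}}\big(\oplus_j\Oc_{\Pbb^m_\Cbb}(-b_j)\big),
\end{align*}
which thus glues to a global isomorphism of sheaves of supercommutative $\Oc_{\Pbb^m_\Cbb}$-algebras. Because this isomorphism respects the grading by degree in the $\q$'s, carrying it to the exterior grading, it exhibits $\Pbb^{m|n}_\Cbb(1|\vec b)$ as split, and hence $\Pbb^{m|n}_\Cbb(1|\vec b)\cong S\big(\Pbb^m_\Cbb,\oplus_j\Oc_{\Pbb^m_\Cbb}(-b_j)\big)$, as claimed.

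The substance of the argument is light, and I expect the attention to be needed at two bookkeeping points. The first is the assertion above that the quotient of Construction \ref{rf894hf894jf80jf09jf0} is genuinely computed by the slice charts and has invariant functions for its structure sheaf; this is the standard fact that a free, proper action of a bosonic Lie group on the body of a superspace admits a quotient that is locally modelled on a slice. The second is keeping the sign correct: the odd coordinates, which transform on overlaps by $\eta^{(\mu)}_a=(x^\nu/x^\mu)^{b_a}\eta^{(\nu)}_a$, must be recognised as a local frame of $\oplus_j\Oc_{\Pbb^m_\Cbb}(-b_j)$ rather than of its dual, and the invariant-function description above is exactly what renders this unambiguous; it also specialises correctly to $\vec b=(1,\dots,1)$, returning the known answer $\oplus^n\Oc_{\Pbb^m_\Cbb}(-1)$. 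Finally, the case where some $b_a\le 0$ requires no separate treatment, since on $V_\mu$ the coordinate $x^\mu$ is a unit and the odd coordinates are nilpotent, so all the expressions above are legitimate.
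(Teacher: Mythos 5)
Your proposal is correct and follows essentially the same route as the paper: the same affine slice charts $U_\mu=\{x^\mu\neq 0\}$ with even coordinates $x^\rho/x^\mu$ and odd coordinates $\q_a/(x^\mu)^{b_a}$, the identification of the structure sheaf with $\Cbb^\times$-invariants, and the recognition that the resulting transition data preserves the $\q$-degree, which yields both $\Jc/\Jc^2\cong\oplus_j\Oc_{\Pbb^m_\Cbb}(-b_j)$ and splitness. The only (cosmetic) difference is that you package the splitness as a global degree-by-degree decomposition of the invariants, where the paper phrases it as the atlas having trivial obstruction cocycle.
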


\begin{proof}
The assertion of the present theorem is precisely the following:
\begin{enumerate}[(i)]
	\item $\Pbb_\Cbb^{m|n}(1|\vec  b)$ is a superspace;
	\item $\big(\Pbb_\Cbb^{m|n}(1|\vec  b)\big)_\red = \Pbb_\Cbb^m$;
	\item $\Jc/\Jc^2 = \oplus_j \Oc_{\Pbb_\Cbb^m}(-b_j)$;
	\item $\Pbb_\Cbb^{m|n}(1|\vec  b)$ is split.
\end{enumerate}
Hence to prove this theorem we need to confirm (i)---(iv) above. 

(i) A superspace is a locally ringed space $\Xfr$ with sheaf of rings $\Oc_\Xfr$, supercommutative and locally isomorphic to an exterior algebra. Therefore, to show $\Pbb_\Cbb^{m|n}(1|\vec  b)$ is a superspace, we need to show its structure sheaf is a sheaf of supercommutative algebras, locally isomorphic to an exterior algebra. Recall that $\Pbb_\Cbb^{m|n}(1|\vec  b)$ is the quotient of $\Cbb^{m+1|n}$ by the action of $\Cbb^\times$. Let $(x^\mu|\q_j)$ be coordinates on $\Cbb^{m+1|n}$, $\mu = 1, \ldots,m+1$ and $j = 1, \ldots, n$. On the open set $U_\mu = (x^\mu\neq0)$ in $\Abb_\Cbb^{m+1|n}$ we write,
\begin{align*}
z^\nu_{\{\mu\}} = \frac{x^\nu}{x^\mu}
&&
\mbox{and}
&&
\xi^{\{\mu\}}_j = \frac{\q_a}{(x^\mu)^{b_a}}.
\end{align*}
By construction $z^\nu_{\{\mu\}}$ and $\xi^{\{\mu\}}_j$ are even resp., odd, $\Cbb^\times$-invariant, regular functions on $\Abb_\Cbb^{m+1|n}$. The algebra $\Cbb\big[z_{\{\mu\}}|\xi^{\{\mu\}}\big] = \Cbb\big[z^1_{\{\mu\}}, \ldots, z^{m+1}_{\{\mu\}}|\xi^{\{\mu\}}_1, \ldots,\xi^{\{\mu\}}_n\big]$ inherits the structure of an exterior algebra from $\Cbb[x|\q]$. If $\Oc(U_\mu)$ denotes the sheaf of holomorphic functions on $U_\mu$, then $\Cbb\big[z_{\{\mu\}}|\xi^{\{\mu\}}\big] = \Oc(U_\mu)^{\Cbb^\times} = \Oc(\widetilde U_\mu)$, where $\widetilde U_\mu = U/\Cbb^\times \subset \Pbb^{m|n}_\Cbb(1|\vec b)$. In identifying $\Oc_{\Pbb^{m|n}_\Cbb(1|\vec b)}(\widetilde U_\mu) = \Oc(\widetilde U_\mu)$ and observing that $\big(\widetilde U_\mu\big)_{\mu = 1, \ldots,m+1}$ covers $\Pbb^{m|n}_\Cbb(1|\vec b)$ we see that $\Oc_{\Pbb^{m|n}_\Cbb(1|\vec b)}$ is locally an exterior algebra. Hence $\Pbb^{m|n}_\Cbb$ is a superspace.

(ii) $\big(\Pbb_\Cbb^{m|n}(1|\vec  b)\big)_\red$ is characterised by its structure sheaf $\Oc_{\Pbb_\Cbb^{m|n}(1|\vec  b)}/\Jc$, where $\Jc\subset \Oc_{\Pbb_\Cbb^{m|n}(1|\vec  b)}$ is the fermionic ideal. With respect to the covering $(\widetilde U_\mu)$ described above, $\Jc(U_\mu)$ is generated by $(\xi_1^{\{\mu\}}, \ldots, \xi_n^{\{\mu\}})$. Hence $\big(\Oc_{\Pbb_\Cbb^{m|n}(1|\vec  b)}/\Jc\big)(\widetilde U_\mu) \cong \Oc_{\Pbb^m_\Cbb}((\widetilde U_\mu)_\red)$. Since $(\widetilde U_\mu)$ covers $\Pbb_\Cbb^{m|n}(1|\vec  b)$ we deduce an isomorphism of sheaves $\Oc_{\Pbb_\Cbb^{m|n}(1|\vec  b)}/\Jc \cong \Oc_{\Pbb^m_\Cbb}$ and so $\big(\Pbb_\Cbb^{m|n}(1|\vec  b)\big)_\red = \Pbb^m_\Cbb$. 

(iii) Let $\Jc\subset \Oc_{\Pbb_\Cbb^{m|n}(1|\vec  b)}$ be the fermionic ideal. Over $\widetilde U_\mu$ we have $\big(\Jc/\Jc^2\big)(\widetilde U_\mu) \cong \Oc_{\Pbb_\Cbb^m}\big((\widetilde U_\mu)_\red\big)[\xi_1^{\{\mu\}}, \ldots, \xi_n^{\{\mu\}}]/\big(\xi_i^{\{\mu\}}\xi_j^{\{\mu\}})$. On the intersection $ U_\mu\cap U_\nu$ where both $x^\mu\neq0$ and $x^\nu\neq 0$ we have,
\[
\xi_j^{\{\mu\}}
=
\frac{\q_j}{(x^\mu)^{b_j}}
=
\left(\frac{x^\nu}{x^\mu}\right)^{b_j}
\frac{\q_k}{(x^\nu)^{b_j}}
=
\left(\frac{x^\mu}{x^\nu}\right)^{-b_j}
\xi_j^{\{\nu\}}
=
\big(z_{\{\nu\}}^\mu\big)^{-b_j} \xi_j^{\{\nu\}}.
\]
Hence each generator $\xi_j^{\{\mu\}}$ transforms as sections of $\Oc_{\Pbb_\Cbb^m}(-b_j)(U_\mu)$. Therefore, $\Jc/\Jc^2 \cong \oplus_j \Oc_{\Pbb^m}(-b_j)$.

(iv) To see that $\Pbb_\Cbb^{m|n}(1|\vec  b)$ is split, observe that in (i)---(iii) we have constructed an atlas for $\Pbb_\Cbb^{m|n}(1|\vec  b)$ with transition data on $\widetilde U_\mu\cap \widetilde U_\nu$,
\begin{align}
z_{\{\mu\}}^\s = \frac{z^\s_{\{\nu\}}}{z_{\{\nu\}}^\mu}
&&
\mbox{and}
&&
\xi_j^{\{\mu\}}
=
\big(z_{\{\nu\}}^\mu\big)^{-b_j} \xi_j^{\{\nu\}}.
\label{rhf984hf89f0jf093j09j3f}
\end{align}
This is an atlas with trivial obstruction cocycle and so is a split atlas for $\Pbb_\Cbb^{m|n}(1|\vec  b)$. Hence $\Pbb_\Cbb^{m|n}(1|\vec  b)$ is split. 

This theorem now follows.
\end{proof}

\begin{REM}
\emph{Since $\big(\Pbb^{m|n}_\Cbb(1|\vec b)\big)_\red = \Pbb^m_\Cbb$ is a complex manifold (i.e., non-singular, analytic space) we see that $\Pbb^{m|n}_\Cbb(1|\vec b)$ is a complex supermanifold according to Definition \ref{rhf874g87h398f30j03}. When $\vec b = (1, \ldots, 1)$ we recover $\Pbb^{m|n}_\Cbb\cong S(\Pbb^m_\Cbb, \oplus^n \Oc_{\Pbb^m_\Cbb}(-1)\big)$.}
\end{REM}

\noindent
Since any holomorphic vector bundle on $\Pbb_\Cbb^1$ splits into a sum of line bundles, we have the following immediate corollary.

\begin{COR}
Any $(1|n)$-dimensional, split supermanifold with reduced space $\Pbb^1_\Cbb$ 
is of the form $\Pbb_\Cbb^{1|n}(1|b_1, \ldots, b_n)$ for some $n$-tuple of integers $(b_1, \ldots, b_n)$.\qed
\end{COR}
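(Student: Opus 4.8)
The plan is to reduce the statement to the classification of holomorphic vector bundles on $\Pbb^1_\Cbb$ together with Theorem \ref{rhhf8h98fh93893j3jf0}. First I would recall that a split supermanifold $\Xfr = S(X, T^*_{X,-})$ is, by construction, determined up to isomorphism by its model, i.e., by the pair consisting of its reduced space $X$ and its odd cotangent sheaf $T^*_{X,-}$; indeed an isomorphism of models $(X, T^*_{X,-})\cong (X', T^*_{X',-})$ induces an isomorphism of the exterior algebras $\wedge^\bt_{\Oc_X}T^*_{X,-}\cong \wedge^\bt_{\Oc_{X'}}T^*_{X',-}$, hence of the associated split superspaces. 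So if $\Xfr$ is a $(1|n)$-dimensional split supermanifold with reduced space $\Pbb^1_\Cbb$, it suffices to identify its odd cotangent sheaf $T^*_{\Pbb^1_\Cbb, -}$, which by definition is a locally free sheaf of $\Oc_{\Pbb^1_\Cbb}$-modules of rank $n$.

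Next I would invoke the Birkhoff--Grothendieck splitting theorem: every holomorphic vector bundle on $\Pbb^1_\Cbb$ is isomorphic to a direct sum of line bundles. Applying this to $T^*_{\Pbb^1_\Cbb, -}$ yields integers $c_1, \ldots, c_n$ with $T^*_{\Pbb^1_\Cbb, -}\cong \oplus_{j=1}^n \Oc_{\Pbb^1_\Cbb}(c_j)$; setting $b_j = -c_j$ and $\vec b = (b_1, \ldots, b_n)$ we get $T^*_{\Pbb^1_\Cbb, -}\cong \oplus_j \Oc_{\Pbb^1_\Cbb}(-b_j)$. By the first paragraph, $\Xfr\cong S\big(\Pbb^1_\Cbb, \oplus_j \Oc_{\Pbb^1_\Cbb}(-b_j)\big)$, and by Theorem \ref{rhhf8h98fh93893j3jf0} (with $m = 1$) the right-hand side is isomorphic to $\Pbb^{1|n}_\Cbb(1|\vec b)$. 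Chaining the two isomorphisms gives $\Xfr\cong \Pbb^{1|n}_\Cbb(1|\vec b)$, as required.

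There is essentially no serious obstacle here: the only nontrivial input is the Birkhoff--Grothendieck theorem, which is a standard fact and is precisely the sentence flagged immediately before the corollary in the text. The one point that merits a word of care is that the tuple $\vec b$ is not unique as an ordered tuple — only as a multiset — since reordering the $b_j$ permutes the summands of $\oplus_j \Oc_{\Pbb^1_\Cbb}(-b_j)$ and hence gives isomorphic split models; but the statement of the corollary only asserts existence of \emph{some} $n$-tuple, so this causes no difficulty. (If desired, one could note additionally that $\Pbb^{1|n}_\Cbb(1|\vec b)\cong \Pbb^{1|n}_\Cbb(1|\vec b')$ as split supermanifolds iff $\vec b$ and $\vec b'$ agree up to permutation, by uniqueness of the decomposition into line bundles, but this is not needed for the proof.)
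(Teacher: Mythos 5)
Your proposal is correct and follows exactly the route the paper intends: the corollary is stated as an immediate consequence of the Birkhoff--Grothendieck splitting of vector bundles on $\Pbb^1_\Cbb$ combined with Theorem \ref{rhhf8h98fh93893j3jf0}, which is precisely the argument you spell out. The paper leaves this implicit (the corollary carries only the sentence preceding it as justification), so your write-up is simply a fleshed-out version of the same proof.
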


\begin{REM}\label{rhf894hf84jf09j4444}
\emph{The action in \eqref{rfh984hf894f90j0} can be generalised to $(x^\mu|\q_a) \stackrel{\lam}{\longmapsto} (\lam^{a^\mu} x^\mu|\lam^{b_a}\q_a)$ for fixed, positive integers $a^1, \ldots, a^{m+1}$. Setting $\vec a = (a^1, \ldots, a^{m+1})$, the quotient of $\Cbb^{m+1|n}$ by this action is $(\vec a|\vec b)$-weighted projective superspace $\Pbb^{m|n}_\Cbb(\vec a|\vec b)$. By construction $\Pbb^{m|n}_\Cbb(\vec a|\vec b)_{\red} = \Pbb^{m|n}_\Cbb(\vec a)$. Since $\Pbb^{m|n}_\Cbb(\vec a)$ is generally a singular variety, $\Pbb^{m|n}_\Cbb(\vec a|\vec b)$ is an example of a singular superspace by Definition \ref{rhf874g87h398f30j03}. Subvarieties of weighted projective superspaces appear in \cite{SETHI} as mirror superspaces. While it would be interesting to study weighted projective superspaces more generally, we refrain from doing so in this article. Our focus is on projective superspaces from Construction \ref{rf894hf894jf80jf09jf0} and varieties therein.}
\end{REM}

\section{Subvarieties and Splitting}
\label{rjhf894h89j988h3}

\subsection{Homogeneous Coordinates}
The ring $\Cbb[x^1, \ldots, x^{m+1}] = \Cbb[x]$ is graded, with graded pieces $\Cbb[x](n)$ comprising homogeneous polynomials of degree $n$. These graded pieces $\Cbb[x](n)$ correspond to global sections of Serre's twisting sheaves $\Oc_{\Pbb_\Cbb^m}(n)$. So for instance, $H^0(\Pbb_\Cbb^m,\Oc_{\Pbb^m_\Cbb}(1))$ is the module of homogeneous coordinates for $\Pbb_\Cbb^m$. In the supercommutative case we consider $\Cbb[x^1, \ldots, x^{m+1}, \q_1, \ldots, \q_n]$, abbreviated to $\Cbb[x|\q]$. If $\q_j$ is positively weighted, with weight $b_j>0$, then $\q_j$ will define local sections $(\xi^{\{\mu\}}_j)$ of $\Oc_{\Pbb_\Cbb^m}(-b_j)$ as we see from Theorem \ref{rhhf8h98fh93893j3jf0}. The sheaf $\Oc_{\Pbb_\Cbb^m}(-b_j)$ has no global sections however so, in stark contrast with the commutative case, $\q_j$ cannot be interpreted as a global section. 
\\\\
Despite this shortcoming, it will nevertheless be instructive to view $\Cbb[x|\q]$ as the homogeneous coordinate ring for projective superspace. Projective superspace varieties then corresponding to homogeneous, prime ideals. Accordingly, with $\Cbb[x]$ the homogeneous coordinate ring of $\Pbb^m_\Cbb$, we view the odd variables $\q$ as `formal parameters' over $\Pbb^m_\Cbb$.

\begin{REM}\label{4fg784gf7hf98h3f803jf30}
\emph{If $\q_j$ is negatively weighted, i.e., $b_j< 0$, then we might view it as a global section over $\Pbb^m_\Cbb$. However, as we will see, there are other drawbacks associated with negative weightings which render them difficult to study.}
\end{REM}

\subsection{Projective Superspace Varieties}
Let $F = (f^\al)$ be a finite collection of \emph{even}, homogeneous polynomials in $\Cbb[x|\q]$, i.e., polynomials of even degree, homogeneous with respect to the action of $\Cbb^\times$. We generically write,
\begin{align}
f^\al(x|\q) = f^\al(x|0) + h^{\al|2}(x)\q^2 + h^{\al|4}(x)\q^4 +\ldots
\label{rhf7hf79h9f83fj03}
\end{align}
where, e.g., by the notation $h^{\al|2} (x)\q^2$ it is meant $\sum_{i,j}h^{\al|ij}(x)\q_i\q_j$ for appropriate polynomial functions $h^{\al|ij}(x)$ preserving homogeneity of $f^\al$. 

\begin{DEF}
\emph{A polynomial $f(x|\q)$ in $\Cbb[x|\q]$ is said to be \emph{irreducible} if $f(x|0)\in \Cbb[x]$ is irreducible.}
\end{DEF}

\noindent
We assume $F = (f^\al)$ is a (finite) collection of even, irreducible, homogeneous polynomials. In supposing the coordinates $x^\mu$ and $\q_j$ are weighted, with weights $1$ and $b_j$ respectively, the locus $\{(x|\q)\mid f^\al(x|\q) = 0,\forall \al\}$ defines a subvariety $V(F)\subset \Pbb^{m|n}_\Cbb(1|\vec b)$. Since each $f^\al(x|\q)$ is homogeneous, then so is $f^\al(x|0)$. Moreover, since $f^\al(x|\q)$ is irreducible then so is $f^\al(x|0)$ by definition. Hence the ideal generated by $\big(f^\al(x|0)\big)$ in $\Cbb[x]$ will define a subvarety $V_0\subset \Pbb_\Cbb^m$. In viewing $V(F)$ as a superspace\footnote{from the material so far presented, it is not yet clear that $V(F)$ will be a superspace as it is unclear whether it will be `locally split'. We address this issue in the subsequent section.}, $V_0$ is its reduced space and its odd cotangent sheaf is the restriction of that of $\Pbb^m_\Cbb(1|\vec b)$ to $V_0$. By Theorem \ref{rhhf8h98fh93893j3jf0}, $V(F)$ is modelled on $\big(V_0, \oplus_j \Oc_{\Pbb_\Cbb^m}(-b_j)|_{V_0}\big)$. In accordance with Definition \ref{rhf874g87h398f30j03}, $V(F)$ is non-singular iff $V_0$ is non-singular.

\begin{REM}
\emph{In \cite{BETTEMB} the subvarieties $V(F)$ are referred to as \emph{even}. This is owing to the fact that the summands of $f^\al$ in \eqref{rhf7hf79h9f83fj03} are all of even degree in the odd variables. More generally, even subspaces have the property that their odd cotangent sheaf is the restriction of the odd cotangent of the ambient superspace. In this article we will only be concerned with even subspaces and thereby drop the prefix `even.' In the case where there is a single polynomial equation $F = (f)$, the subvariety $V(F)$ is referred to as a hypersurface. In \cite{DW1}, hypersurfaces are referred to as \emph{superspace divisors}.}
\end{REM}

\subsection{Splittings}
We begin with a digression on splittings of affine superspaces. 

\subsubsection{Affine Superspace Varieties}
Generally, a splitting of a superspace $\Yfr$ modelled on $(Y, T^*_{Y, -})$ is an isomorphism of $\Yfr$ with its split model $S(Y, T^*_{Y, -})$. Now for a (finite) family $Z = \big(\zeta^\al(x|\q)\big)$ of even, irreducible polynomials in $\Cbb[x|\q]$, the zero set $V(Z)\subset \Abb_\Cbb^{m+1|n}$ is an affine superspace variety. Its coordinate ring is $\Cbb[x|\q]/(\zeta)$. The data modelling $V(Z)$ is $\big( V_0, T_{\Abb_\Cbb^{m+1}, -}^*|_{V_0}\big)$, where $V_0 = \big\{ \zeta^\al(x|0) = 0,\forall \al\big\} \subset \Abb_\Cbb^{m+1}$ and $T_{\Abb_\Cbb^{m+1}, -}^* = \oplus^n \Oc_{\Abb^{m+1}_\Cbb}$. 

\begin{PROP}\label{rhf78hf84hf983f80j30}
Any affine superspace variety is isomorphic to its split model.
\end{PROP}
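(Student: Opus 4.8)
The plan is to exhibit an explicit isomorphism between the coordinate ring $\Cbb[x|\q]/(\zeta)$ and the coordinate ring of the split model $S\big(V_0, T^*_{\Abb^{m+1}_\Cbb, -}|_{V_0}\big)$, which is $\Cbb[x]/(\zeta^\al(x|0)) \otimes \wedge^\bt(\q_1, \ldots, \q_n)$, or equivalently $\big(\Cbb[x]/(\zeta(x|0))\big)[\q_1, \ldots, \q_n]$ with the $\q_j$ anticommuting. The key observation is that over affine space there are no cohomological obstructions: $\Abb^{m+1}_\Cbb$ is Stein (in the analytic category) and $\Cbb[x]$ is a polynomial ring, so the relevant obstruction groups $H^1$ vanish. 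Concretely, I would show that one can change coordinates on $\Abb^{m+1|n}_\Cbb$ itself — by an automorphism of $\Cbb[x|\q]$ of the form $x^\mu \mapsto x^\mu$, $\q_j \mapsto \q_j + (\text{odd nilpotent corrections})$ — so that each $\zeta^\al$ is carried to its body $\zeta^\al(x|0)$.

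First I would set up the induction on the nilpotency degree of the fermionic ideal: write each $\zeta^\al = \zeta^\al(x|0) + h^{\al|2}(x)\q^2 + \cdots$ as in \eqref{rhf7hf79h9f83fj03}. The strategy is to kill the $\q^2$-terms first, then the $\q^4$-terms, and so on, each stage using an automorphism of $\Cbb[x|\q]$ that is the identity modulo higher powers of $\q$. Second, for the $\q^2$ stage, I would look for an automorphism $\q_i \mapsto \q_i - \tfrac12 \sum_j g^{ij}(x)\q_j$ (or, more symmetrically, a shift built from the $h^{\al|ij}$) such that the transformed $\zeta^\al$ has vanishing quadratic part. This reduces to solving, in $\Cbb[x]/(\zeta(x|0))$, a linear system of the shape ``$\sum_j \partial_j \zeta^\al(x|0)\cdot (\text{correction}) \equiv -h^{\al|2}\q^2$'' — and here is where I would invoke the smoothness/regularity that is implicit: for a general affine variety one uses instead that the relations among the $\zeta^\al(x|0)$ can be lifted, i.e. that the conormal sheaf is free, which over affine space / a polynomial ring is automatic. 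Actually the cleanest route avoids solving anything explicitly: work analytically, and observe that the obstruction to removing the degree-$2k$ part of the defining equations is a class in $H^1$ of a coherent sheaf on the Stein space $V_0$, hence zero; patch the local splittings to a global one.

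Third, having cleared all even powers $\q^2, \q^4, \ldots$ successively, the automorphism of $\Cbb[x|\q]$ obtained by composing these stages sends the ideal $(\zeta^\al(x|\q))$ onto the ideal $(\zeta^\al(x|0))$ generated by the bodies, and hence induces an isomorphism $\Cbb[x|\q]/(\zeta) \xrightarrow{\ \sim\ } \big(\Cbb[x]/(\zeta(x|0))\big)[\q]$, which is exactly the coordinate ring of the split model $S\big(V_0, \oplus^n \Oc_{\Abb^{m+1}_\Cbb}|_{V_0}\big)$. I expect the main obstacle to be making the inductive step genuinely work for a \emph{non-smooth} $V_0$: the naive ``solve for the shift using $\partial_j \zeta^\al$'' argument needs $V_0$ smooth, so the honest argument should instead phrase the degree-$2k$ correction as a splitting of the degree-$2k$ piece of the relevant thickening and appeal to vanishing of $H^1$ of coherent sheaves on the affine (Stein) space underlying $V_0$. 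Once that cohomological vanishing is in hand, the rest is bookkeeping with nilpotents.
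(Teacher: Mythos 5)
Your final argument --- that every obstruction to splitting lies in $H^1$ of a coherent sheaf on the affine (Stein) reduced space $V_0$ and hence vanishes --- is exactly the paper's proof, which simply invokes Cartan's Theorem B (analytically) or Serre's affineness criterion (algebraically) and concludes. The explicit coordinate-change induction you sketch first, and correctly flag as requiring smoothness of $V_0$, plays no role in the paper; the cohomological vanishing you retreat to is the whole argument.
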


\begin{proof}
Let $V(Z)\subset \Abb^{m+1|n}_\Cbb$ be an affine superspace variety. Then $\big(V(Z)\big)_\red = V_0$ is an affine variety. Cartan's Theorem B in the complex analytic setting, or Serre's criterion for affineness in the algebraic setting, asserts that  the cohomology of any abelian sheaf on an affine variety is acyclic. Now, any obstruction to splitting $V(Z)$ lies in the first cohomology of the obstruction sheaf on $V_0$, which is an abelian sheaf. Hence this cohomology group vanishes and so any obstructions to splitting $V(Z)$ must vanish. Therefore $V(Z)$ must be split.
\end{proof}

\noindent
Now consider the variety $V_0$ defined by the locus of $\zeta = \big(\zeta^\al(x|0)\big)$. Let $I_\zeta\subset \Cbb[x]$ denote the ideal generated by $\big(\zeta^\al(x|0)\big)$. The coordinate algebra for the split model $S\big(V_0, T_{\Abb_\Cbb^{m|n}, -}^*|_{V_0}\big)$ is $\Cbb[x|\q]/I_\zeta$.\footnote{As an exterior algebra: $\Cbb[x|\q]/I_\zeta = \wedge^\bt_{\Cbb[x]/I_\zeta}\big(\widetilde{J/J^2}\big)$, where $J \subset \Cbb[x|\q]$ is the fermionic ideal; and $J/J^2$ is a $\Cbb[x]$-module and $\widetilde{J/J^2} = (J/J^2)/I_\zeta(J/J^2)$.} If $I_Z \subset \Cbb[x|\q]$ denotes the ideal generated by $Z = \big(\zeta^\al(x|\q)\big)$ then Proposition \ref{rhf78hf84hf983f80j30} implies,
\begin{align}
\frac{\Cbb[x|\q]}{I_Z} 
\cong
\frac{\Cbb[x|\q]}{I_\zeta}.
\label{fbiuuivuhoijijipo3}
\end{align}
Any isomorphism between the algebras in \eqref{fbiuuivuhoijijipo3} is referred to as a \emph{splitting}.

\subsubsection{Projective Superspace Varieties}
We consider the implications of Proposition \ref{rhf78hf84hf983f80j30} now for varieties in projective superspace.

\begin{PROP}\label{rhf784fhf93f893j}
Any variety $V$ in a projective superspace $\Pbb^{m|n}_\Cbb(1|\vec b)$ is itself a superspace.
\end{PROP}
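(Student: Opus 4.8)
The plan is to reduce to the affine situation already settled in Proposition~\ref{rhf78hf84hf983f80j30}, using that ``being a superspace'' is a purely local condition: a superspace is a locally ringed space whose structure sheaf is supercommutative and \emph{locally} isomorphic to an exterior algebra, so it suffices to exhibit an open cover of $V$ over which this holds. The natural cover is $\{V\cap\widetilde U_\mu\}_{\mu=1,\ldots,m+1}$, where the $\widetilde U_\mu = U_\mu/\Cbb^\times\cong\Abb^{m|n}_\Cbb$ are the standard affine charts of $\Pbb^{m|n}_\Cbb(1|\vec b)$ from the proof of Theorem~\ref{rhhf8h98fh93893j3jf0}, with coordinate superalgebra $\Oc(\widetilde U_\mu)=\Cbb[z_{\{\mu\}}\vert\xi^{\{\mu\}}]$.

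First I would identify each $V\cap\widetilde U_\mu$ as an affine superspace variety in the sense of Section~\ref{rjhf894h89j988h3}. Dehomogenising the even, homogeneous polynomials $f^\al(x\vert\q)$ at $x^\mu$ yields even polynomials $f^\al_{\{\mu\}}(z_{\{\mu\}}\vert\xi^{\{\mu\}})$ in the chart coordinates, and $V\cap\widetilde U_\mu$ is exactly their common zero locus, with structure sheaf the quotient $\Oc(\widetilde U_\mu)/(f^\al_{\{\mu\}})$ --- supercommutative, being a quotient of a supercommutative ring. Its reduced space $V_0\cap(\widetilde U_\mu)_\red$ is the affine variety cut out by the $f^\al_{\{\mu\}}(z_{\{\mu\}}\vert 0)$ in $\Abb^m_\Cbb$. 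Hence Proposition~\ref{rhf78hf84hf983f80j30} applies. (The only point to note is irreducibility of the dehomogenised equations, which can in principle fail; but the proof of Proposition~\ref{rhf78hf84hf983f80j30} uses only that the reduced space is affine, so the conclusion persists in any case, and charts disjoint from $V_0$ contribute nothing.)

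Next, by Proposition~\ref{rhf78hf84hf983f80j30}, $V\cap\widetilde U_\mu$ is isomorphic to its split model $S\big(V_0\cap(\widetilde U_\mu)_\red,\ \oplus^n\Oc_{\Abb^m_\Cbb}\vert_{V_0}\big)$, whose structure sheaf $\wedge^\bt_{\Oc}\big(\oplus^n\Oc\big)$ is an exterior algebra over the structure sheaf of an affine variety; in particular it is supercommutative and locally isomorphic to an exterior algebra. Since the $\widetilde U_\mu$ cover $V$, this shows $V$ is a superspace, and along the way one reads off that $V$ is modelled on $\big(V_0,\ \oplus_j\Oc_{\Pbb^m_\Cbb}(-b_j)\vert_{V_0}\big)$, consistently with the discussion of Section~\ref{rjhf894h89j988h3}.

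The hard part is conceptual rather than computational: one must not ask for a \emph{global} splitting of $V$, which in general does not exist --- indeed the non-existence of such splittings is the theme of the rest of the paper. The argument produces splitness only \emph{chart by chart}, but that is precisely the condition ``locally an exterior algebra'' and hence exactly what is needed. The only technical care lies in confirming that the restriction $V\cap\widetilde U_\mu$ really satisfies the hypotheses of Proposition~\ref{rhf78hf84hf983f80j30}: that its structure sheaf is the expected quotient superalgebra and that its reduced space is affine (hence Stein), so that Cartan's Theorem~B --- the engine behind Proposition~\ref{rhf78hf84hf983f80j30} --- is available.
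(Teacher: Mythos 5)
Your proposal is correct and follows essentially the same route as the paper: restrict $V$ to the standard affine charts $\widetilde U_\mu$ of $\Pbb^{m|n}_\Cbb(1|\vec b)$, recognise each $V\cap\widetilde U_\mu$ as an affine superspace variety, invoke Proposition~\ref{rhf78hf84hf983f80j30} to get local splitness, and conclude that $V$ is locally isomorphic to an exterior algebra and hence a superspace. Your additional remarks (dehomogenisation, the immateriality of irreducibility of the dehomogenised equations, and the role of Cartan's Theorem~B) merely flesh out details the paper leaves implicit.
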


\begin{proof}
In the proof of Theorem \ref{rhhf8h98fh93893j3jf0} we described a system of local coordinates which served to show that $\Pbb^{m|n}_\Cbb(1|\vec b)$ is a split supermanifold. Denote by $(U_\mu)_{\mu = 1, \ldots, m+1}$ this coordinate atlas. For any variety $V\subset \Pbb^{m|n}_\Cbb(1|\vec b)$ note that $V\cap U_\mu$ will be an affine superspace variety. Hence it will be split by Proposition \ref{rhf78hf84hf983f80j30}. Hence $V$ will be locally split. The local splitting will be of the form \eqref{fbiuuivuhoijijipo3} and so $V$ and its split model will be locally isomorphic. It is therefore a superspace.
\end{proof}

\noindent
\begin{REM}\label{rhf73hf93hf98h39}
\emph{A splitting of a projective superspace variety $V\subset \Pbb^{m|n}_\Cbb(1|\vec b)$ is now immediate. It is a consistent choice of local splittings, which exist by Proposition \ref{rhf784fhf93f893j}, that integrate to a global splitting, i.e., that give the same splitting on intersections.}
\end{REM}

\subsection{Homogeneous Splittings}
In contrast to other sections, the results here will apply generally to weighted projective superspaces $\Pbb^{m|n}_\Cbb(\vec a|\vec b)$ (c.f., Remark \ref{rhf894hf84jf09j4444}). Subvarieties of weighted projective superspaces are defined analogously to those of projective superspaces, i.e., by homogeneous, prime ideals in the homogenous coordinate ring. By Definition \ref{r78gf874hf89jf0j3}, a superspace $\Xfr$ is \emph{reduced} if it is isomorphic to its reduced space $\Xfr_\red$. If the rank of the odd cotangent sheaf is non-zero then $\Xfr$ cannot be reduced and so the `interesting' superspaces are all non-reduced. To a projective superspace variety, we consider the  notion `homogeneously non-reduced' in what follows.

\begin{DEF}\label{gf784g749h89jf0j9033f}
\emph{Fix the homogeneous coordinate ring $\Cbb[x|\q]$ and let $F = (f^\al)$ be a finite collection of even, irreducible, homogeneous polynomials in $\Cbb[x|\q]$. The projective, superspace variety $V(F)$ is said to be \emph{homogeneously non-reduced} if there exists at least one $\al$ and $k$ such that $\pt f^\al/\pt \q_k\neq0$.}
\end{DEF}

\begin{REM}
\emph{If a projective, superspace variety is homogeneously reduced, then it will be split as a superspace (c.f., \eqref{fbiuuivuhoijijipo3}).}
\end{REM}

\noindent
In the previous section we described splittings of subvarieties of $\Pbb^{m|n}_\Cbb(1|\vec b)$. Presently, we will consider a weaker form of splitting which is more generally applicable to subvarieties of $\Pbb^{m|n}_\Cbb(\vec a|\vec b)$.

\begin{DEF}\label{rfh8hf984f8jf0j30}
\emph{Let $F = (f^\al)$ be a finite collection homogeneous, even, irreducible polynomials in $\Cbb[x|\q]$. The subvariety $V(F)\subset \Pbb_\Cbb^{m|n}(\vec a|\vec b)$ is \emph{homogeneously split} if there exists an automorphism $\vp$ of $\Cbb[x|\q]$ such that:
\begin{enumerate}[(i)]
	\item the induced map $\overline\vp : \Cbb[x|\q]/(\q^2) \ra \Cbb[x|\q]/(\q^2)$ is the identity;
	\item $\vp$ preserves the weight, i.e., 
	\begin{align*}
	\mathrm{wt.}(x^\mu) = \mathrm{wt.}(\vp(x^\mu))&&\mbox{and}&&\mathrm{wt.}(\q_j) = \mathrm{wt.}(\vp(\q_j));
	\end{align*}
	\item $V\big((F\circ \vp)\big) = \big\{ \big(f^\al\circ \vp\big)(x|\q) = 0\mid\forall \al\big\}$ is homogeneously reduced.
\end{enumerate}
If $V(F)$ is \emph{not} homogeneously split, it is said to be \emph{homogeneously non-split}.}
\end{DEF}

\noindent
Restricting to subvarieties of $\Pbb^{m|n}_\Cbb(1|\vec b)$, we have the following relation to splitness following the characterisation in the previous section.

\begin{LEM}\label{rfioeoiejieopsss}
If a subvariety of $\Pbb^{m|n}_\Cbb(1|\vec b)$ is homogeneously reduced, then it is split as a superspace.
\end{LEM}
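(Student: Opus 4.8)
The plan is to reduce the assertion to a statement about local data and then invoke the characterisation of splittings for subvarieties of $\Pbb^{m|n}_\Cbb(1|\vec b)$ developed in the preceding subsections. Suppose $V(F)\subset \Pbb^{m|n}_\Cbb(1|\vec b)$ is homogeneously reduced. By Definition \ref{rfh8hf984f8jf0j30}, this means there is an automorphism $\vp$ of $\Cbb[x|\q]$ which restricts to the identity modulo $(\q^2)$, preserves the weights of all coordinates, and such that $V\big((F\circ\vp)\big)$ has $\pt(f^\al\circ\vp)/\pt\q_k = 0$ for all $\al$ and all $k$; equivalently, each $f^\al\circ\vp$ lies in the subring $\Cbb[x]\subset\Cbb[x|\q]$ after the substitution, so the defining ideal of $V\big((F\circ\vp)\big)$ is generated by polynomials pulled back from $\Cbb[x]$.

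First I would observe that, since $\vp$ preserves weights, it descends to an automorphism $\widetilde\vp$ of the projective superspace $\Pbb^{m|n}_\Cbb(1|\vec b)$ itself, carrying $V\big((F\circ\vp)\big)$ isomorphically onto $V(F)$. Hence it suffices to show that a homogeneously reduced variety whose defining ideal is generated by elements of $\Cbb[x]$ — call it $V(F_0)$ with $F_0=(f^\al(x|0))$ — is split as a superspace. But such a $V(F_0)$ is visibly the pullback of the reduced variety $V_0\subset\Pbb^m_\Cbb$ under the split projection, with odd cotangent sheaf $\oplus_j\Oc_{\Pbb^m_\Cbb}(-b_j)|_{V_0}$ and no cross-terms in the local defining equations. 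Concretely, on each chart $\widetilde U_\mu$ of the atlas of Theorem \ref{rhhf8h98fh93893j3jf0}, the local coordinate algebra of $V(F_0)$ is $\Cbb[z_{\{\mu\}}|\xi^{\{\mu\}}]/I_{\zeta}$ with $I_\zeta$ generated by the dehomogenised $f^\al(z_{\{\mu\}}|0)$, which is exactly the local coordinate algebra of the split model in the sense of \eqref{fbiuuivuhoijijipo3}. One then checks — this is where I would be careful — that these local identifications are the restrictions of the (trivial) local splittings on $\Pbb^{m|n}_\Cbb(1|\vec b)$ and so agree on overlaps $\widetilde U_\mu\cap\widetilde U_\nu$, because the transition functions in \eqref{rhf984hf89f0jf093j09j3f} are already diagonal in the $\xi$'s and independent of the even local splitting choices; hence they integrate to a global splitting of $V(F_0)$ in the sense of Remark \ref{rhf73hf93hf98h39}.

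The main obstacle I anticipate is the bookkeeping in the previous step: one must confirm that ``homogeneously reduced'' really kills all the odd-degree obstruction data and not merely the even-degree cross terms of the explicit equations, and that conjugating by $\widetilde\vp$ does not reintroduce local obstructions elsewhere. This is handled by noting that $\widetilde\vp$ is a global isomorphism of superspaces, so splitness is transported along it verbatim; the only genuine content is therefore the splitness of $V(F_0)$, which follows from Proposition \ref{rhf784fhf93f893j} (it is a superspace) together with the observation that its local splittings \eqref{fbiuuivuhoijijipo3} can be chosen compatibly because the fermionic ideal is generated in each chart by the sections $\xi^{\{\mu\}}_j$ with the clean transition law \eqref{rhf984hf89f0jf093j09j3f}, leaving no room for a nontrivial gluing class. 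Assembling these observations yields the lemma.
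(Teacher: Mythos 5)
Your proof is correct and follows essentially the same route as the paper's three-line argument: the homogeneous reduction (after transporting along the automorphism of the ambient superspace induced by $\vp$) yields local splittings of the form \eqref{fbiuuivuhoijijipo3} that agree on overlaps because the transition data \eqref{rhf984hf89f0jf093j09j3f} is already split, hence a global splitting in the sense of Remark \ref{rhf73hf93hf98h39}. Note that by invoking Definition \ref{rfh8hf984f8jf0j30} you in fact prove the slightly stronger statement that a \emph{homogeneously split} variety is split --- which is also what the paper's own proof addresses --- the stated hypothesis being the special case $\vp = \mathrm{id}$.
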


\begin{proof}
A homogeneous splitting will induce local splittings of the subvariety by Definition \ref{rfh8hf984f8jf0j30}(iii). These local splittings are compatible on intersections by construction. Hence, by Remark \ref{rhf73hf93hf98h39}, we will have a global splitting of the subvariety.
\end{proof}

\noindent
Our objective is to show that homogeneous splittings do not exist if the weighted projective superspace is `positive', defined below.

\begin{DEF}
\emph{The weighted projective superspace $\Pbb^{m|n}_\Cbb(\vec a|\vec b)$ is \emph{positive} if the weights $b_j\in \vec b$ are all positive.}
\end{DEF}

\noindent
We arrive now at the main result of this section.

\begin{THM}\label{rhf74hf98hf98hf803jf0}
Let $\Pbb^{m|n}_\Cbb(\vec a|\vec b)$ be a positive, weighted projective superspace and suppose:
\begin{align}
\mbox{$b_j \geq a^\s$ for all $j$ and $\s$.}
\label{fhueuyebfiuno}
 \end{align}
Then any homogeneously non-reduced subvariety of $\Pbb^{m|n}_\Cbb(\vec a|\vec b)$ will be homogeneously non-split. 
\end{THM}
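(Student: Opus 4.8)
The plan is to analyze the effect of an arbitrary weight-preserving automorphism $\vp$ of $\Cbb[x|\q]$ on a homogeneously non-reduced defining family $F = (f^\al)$, and to show that the lowest-order odd term of $f^\al$ cannot be killed by such a $\vp$ when the weighting is positive and satisfies \eqref{fhueuyebfiuno}. First I would record the structure of $\vp$: because it is a ring automorphism of a polynomial superalgebra respecting weights and reducing to the identity modulo $(\q^2)$, it must act on the even generators by $x^\mu \mapsto x^\mu + (\text{terms of weight } a^\mu \text{ that are at least quadratic in } \q)$ and on the odd generators by $\q_k \mapsto \sum_l c_k^l(x)\q_l + (\text{terms cubic or higher in }\q)$, where $c_k^l(x)$ is homogeneous of weight $b_k - b_l$, and invertibility of $\vp$ forces the matrix $(c_k^l)$ (with polynomial entries) to be invertible over $\Cbb[x]$. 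The key point, which uses positivity, is that $c_k^l(x)$ can be a nonzero constant only when $b_k = b_l$, and is otherwise a polynomial of positive or... well, of degree $b_k - b_l$; in any case the leading $\q$-linear behaviour of $\vp$ on the odd variables is by an invertible matrix, so $\vp$ cannot decrease the minimal odd degree appearing in a polynomial.

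Next I would make precise the notion of ``minimal odd degree.'' For each $\al$ write $2d_\al \geq 2$ for the smallest positive even integer with $h^{\al|2d_\al}(x) \neq 0$ in the expansion \eqref{rhf7hf79h9f83fj03}; homogeneous non-reducedness guarantees that for at least one $\al$ this $d_\al$ exists (indeed $\pt f^\al/\pt\q_k \neq 0$ forces some odd term). I would then show that $(f^\al \circ \vp)(x|\q)$ has the same minimal odd degree: substituting the above form of $\vp$ into $f^\al$, every $\q$ gets replaced by something whose lowest $\q$-order part is $\q$-linear with invertible coefficient matrix, and every $x^\mu$ gets replaced by $x^\mu$ plus terms of $\q$-order $\geq 2$. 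Collecting the $\q$-order-$2d_\al$ part of $f^\al\circ\vp$, the contribution from the $h^{\al|2d_\al}(x)\q^{2d_\al}$ summand (with $x$'s unperturbed and each $\q_i$ replaced by $\sum_l c_i^l(x)\q_l$) is a nonzero polynomial in $\q$ because $(c_i^l)$ is invertible, while the contributions from $f^\al(x|0)$ and from the higher $h$-summands can only land in $\q$-order $\geq 2d_\al$, and in exactly order $2d_\al$ they vanish (the $f^\al(x|0)$ term has odd-degree contributions of order $\geq 2$ coming in multiples matching the perturbation of $x$, and one checks these never produce order exactly $2d_\al$ unless combined with... here is where \eqref{fhueuyebfiuno} enters). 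So $f^\al\circ\vp$ is still homogeneously non-reduced, hence by Definition \ref{rfh8hf984f8jf0j30}(iii) no $\vp$ exhibits $V(F)$ as homogeneously split, which is the claim.

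The role of the hypothesis $b_j \geq a^\s$ for all $j,\s$ deserves its own step, and I expect it to be the main obstacle. The worry is a ``resonance'': a perturbation $x^\mu \mapsto x^\mu + g^\mu(x|\q)$ with $g^\mu$ of $\q$-order $2$ could, when plugged into $f^\al(x|0)$, generate odd terms; and one must ensure that the lowest such term cannot conspire to cancel the genuine lowest odd term of $f^\al$ or, worse, that $\vp$ cannot be built to push the odd content of a non-reduced $f^\al$ entirely into the ideal generated by the other $f^\be$'s. The weight bound $b_j \geq a^\s$ is what makes $g^\mu$ — a weight-$a^\mu$ element that is at least quadratic in the $\q$'s (each carrying weight $b_j \geq a^\mu$) — impossible unless it is actually \emph{zero} when $a^\mu < 2\min_j b_j$, and more generally forces $g^\mu$ to involve at most ``few'' $\q$'s in a way that is too high in $\q$-degree to interfere at order $2d_\al$. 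I would isolate this as a lemma: under \eqref{fhueuyebfiuno}, any weight-preserving $\vp$ with $\overline\vp = \id$ has the form $x^\mu\mapsto x^\mu$ (for all $\mu$ with $a^\mu < 2\min b_j$) and $\q_k \mapsto (\text{invertible linear in }\q) + (\text{higher})$, so that no perturbation of the even variables is available to alter low-order odd terms at all in the most important cases; in the remaining cases the perturbation is controlled enough that the minimal odd degree is still preserved. Granting that lemma, the argument of the previous paragraph goes through cleanly, and combined with Lemma \ref{rfioeoiejieopsss} this also re-proves non-splitness as a superspace in the positive, $a^\mu = 1$ case relevant to the rest of the paper.
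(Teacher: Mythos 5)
Your proposal is correct and follows essentially the same route as the paper: everything reduces to the weight count showing that a perturbation of $x^\mu$ which is at least quadratic in the $\q$'s has weight at least $2\min_j b_j$, while weight-preservation demands it have weight $a^\mu$, and \eqref{fhueuyebfiuno} makes these incompatible --- this is precisely the paper's comparison of $\deg \q_{b_I}\le a^\s$ against $\deg\q_{b_I}=\sum_\ell b_{i_\ell}>a^\s$. The only loose end, your ``remaining cases'' with $a^\mu\ge 2\min_j b_j$, is vacuous under \eqref{fhueuyebfiuno} (since $a^\mu\le\min_j b_j<2\min_j b_j$), so your lemma reads simply $\vp(x^\mu)=x^\mu$ for every $\mu$, with $\vp(\q_j)=\q_j+O(\q^3)$ already forced by Definition \ref{rfh8hf984f8jf0j30}(i), after which the lowest-order odd term $h^{\al|2d_\al}\q^{2d_\al}$ visibly survives in $f^\al\circ\vp$.
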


\begin{proof}
The argument is based on comparing degrees. We will consider hypersurfaces. The generalisation to arbitrary varieties is straightforward. Recall that a hypersurface in $\Pbb^{m|n}_\Cbb(\vec a|\vec b)$ is given by the vanishing locus of an irreducible, homogeneous, even polynomial $f\in \Cbb[x|\q]$. Now, by Definition \ref{rfh8hf984f8jf0j30}(i) a homogeneous splitting $\vp$ will be an automorphism $\vp: \Cbb[x|\q]\ra \Cbb[x|\q]$ given by,
\begin{align}
x^\mu &\longmapsto \vp(x^\mu) = x^\mu + \vp^{\mu|2}(x)\q^2 + \vp^{\mu|4}(x) \q^4 +\cdots
\label{rfj09jf093jf09jf09j30}
\\
\q_j &\longmapsto \vp(\q_j) = \q_j + \vp_{j|3}(x)\q^3 + \vp_{j|5}(x)\q^5 + \cdots
\label{rf894hf8989f398hf93}
\end{align}
Definition \ref{rfh8hf984f8jf0j30}(ii) gives constraints on the coefficients of $\vp$. Suppose $f = g + h\q^k$ for some $k > 0$ and polynomials $g, h\in \Cbb[x]$. If $V(f)$ is homogeneously split then, by Definition \ref{rfh8hf984f8jf0j30}(iii), $V(f\circ \vp)$ is reduced. This means $\pt (f\circ \vp)/\pt \q_j = 0$ for all $j$ by Definition \ref{gf784g749h89jf0j9033f}. As such we must write 
\begin{align}
h = \sum_\s h^\s(x)\frac{\pt g}{\pt x^\s}
\label{fuifiuhiuhihiof3}
\end{align}
for some $h^\s(s)$, in which case $\vp^\s = x^\s - h^\s\q^k + \ldots$, where the ellipses denote terms of order $\q^{k+1}$ and higher. In supposing all of this, we will deduce a contradiction. Firstly, since $f$ is homogeneous we have:
\begin{align}
\deg g = \deg h\q^k.
\label{4hf893hf8jf0j3030}
\end{align}
While the product $h\q^k$ is homogeneous, the individual factors need not be. They are sums of homogeneous polynomials however. Writing out $h\q^k$ explicitly, it is:
\begin{align}
h\q^k
=
\sum_{|I| = k} h^I(x)\q_{b_I}
=
\sum_{|I| = k, \s} h^{I|\s}(x)\frac{\pt g}{\pt x^\s} \q_{b_I},
\label{rfgygf73gf79h389fh30}
\end{align}
where $I = (i_1, \ldots, i_k)$ is a multi-index of length $k$ and $\q_{b_I} = \q_{b_{i_1}}\cdots \q_{b_{i_k}}$. The latter equality in \eqref{rfgygf73gf79h389fh30} follows from \eqref{fuifiuhiuhihiof3}. By \eqref{4hf893hf8jf0j3030} the degree of each summand in \eqref{rfgygf73gf79h389fh30} is constant and equal to $\deg g$. Hence we have:
\begin{align}
\deg g &= \deg h^{I|\s} + \deg g - \deg x^\s + \deg \q_{b_I}
\notag
\\
\iff
\deg \q_{b_I} & =\deg x^\s -  \deg h^{I|\s}.
\label{rgf647f674gf87h398h3}
\end{align}
Since $\deg x^\s$ is positive for all $\s$ and $h^{I|\s}$ is a homogeneous polynomial in $\Cbb[x]$, it follows that the right hand side of \eqref{rgf647f674gf87h398h3} is less-than-or-equal-to $\deg x^\s$. Hence that $\deg \q_{b_I} \leq \deg x^\s$. But now, since $\Pbb^{m|n}_\Cbb(\vec a|\vec b)$ is positive, note that for any $i_\ell\in I$, we have the inequality $b_{i_\ell} < \deg \q_{b_I}$. The inequality is strict since $|I|>0$. Hence, $b_{i_\ell} < \deg x^\s = a^\s$ which contradicts \eqref{fhueuyebfiuno}. The theorem now follows. 
\end{proof}

\begin{EX}
Any homogeneously non-reduced subvariety in $\Pbb_\Cbb^{m|n}(1|\vec b)$, for $\vec b$ positive, will be homogeneously non-split.
\end{EX}

\noindent
A consequence of Theorem \ref{rhf74hf98hf98hf803jf0} is: the property of a variety being homogeneously non-reduced is \emph{independent} of its embedding into the appropriately weighted, positive, projective superspace. However, to clarify, it is more difficult to deduce non-splitness of the variety abstractly. That is, a variety could be abstractly split albeit homogeneously non-split. In the following section we will consider an alternate viewpoint on non-splitting for subvarieties of positive, projective superspaces $\Pbb^{m|n}_\Cbb(1|\vec b)$. We will eventually show that for `quadrics' in a positive, projective superspace, the property of being homogeneously non-reduced implies non-splitness.
\\\\
In Appendix \ref{g6df36d873dh983hd09j39} we have included a brief study of the automorphisms of projective superspaces, building on some of the ideas in this section. As it is irrelevant for the main purposes of this article it is included as an appendix, largely for the sake of interest.

\section{$k$-Normal Embeddings}
\label{rfb48fg78f78hf983f80f}

\subsection{Preliminaries}
We recall some relevant results from \cite{BETTEMB} which we intend on applying here. 
To a smooth embedding of split superspaces $S(Y, T^*_{Y, -})\subset S(X, T^*_{X,-})$ we have the following morphism of exact sequences of sheaves,
\begin{align}
\xymatrix{
0\ar[r]& \Qcl_{T^*_{Y, -}, T^*_{X,-}}\ar[d]
\ar[r] & 
\Qcl_{T^*_{X, -}}\ar[d]
\ar[r] & 
\mathcal R_{T^*_{Y, -}, T^*_{X,-}}\ar[d]
\ar[r] & 
0
\\
0
\ar[r] & 
\Qcl_{T^*_{Y, -}}
\ar[r] & 
\Qcl_{T^*_{X, -}}|_Y
\ar[r] & 
N_{T^*_{Y, -}, T^*_{X,-}}
\ar[r]
& 
0
}
\label{fh7f4hf980f3}
\end{align}
where the vertical maps are the restriction of sheaves on $X$ to $Y\subset X$.

\begin{REM}
\emph{Just like the obstruction sheaf, the sheaves in \eqref{fh7f4hf980f3} are all non-negatively $\Zbb$-graded. They are non-trivial in degrees $0\leq k\leq n$ for $n = \mathrm{rank}~T^*_{X, -}$. As we are only concerned with \emph{even} embeddings, the odd-graded components in \eqref{fh7f4hf980f3} are irrelevant. Indeed, for even embeddings, $N^{(2k+1)}_{T^*_{Y, -}, T^*_{X,-}} = (0)$.}
\end{REM}

\noindent
On cohomology we obtain from \eqref{fh7f4hf980f3} a commutative diagram, a piece of which is:
\begin{align}
\xymatrix{
H^0\big(X, \Qcl_{T^*_{X, -}}\big)\ar[r] \ar[d] & H^0\big(X, \mathcal R_{T^*_{Y, -}, T^*_{X,-}}\big)\ar[r] \ar[d]& H^1\big(X,\Qcl_{T^*_{Y, -}, T^*_{X,-}}\big) \ar[d] 
\\
H^0\big(Y, \Qcl_{T^*_{X, -}}|_Y\big)\ar[r] & H^0\big(Y, N_{T^*_{Y, -}, T^*_{X,-}}\big)\ar[r]^\dt & H^1\big(Y,\Qcl_{T^*_{Y, -}}\big) 
}
\label{rgf784gf79h398fh3}
\end{align}
The following result is proved in \cite{BETTEMB}.

\begin{THM}\label{rf874gf79hf83j093j9f}
Let $\Yfr$ be a supermanifold with split model $S(Y, T^*_{Y, -})$ and suppose there exists a smooth embedding $\iota: \Yfr\subset S(X, T^*_{X, -})$. Then there exists a global section $\phi(\iota)\in H^0\big(Y, N_{T^*_{Y, -}, T^*_{X,-}}\big)$ associated to $\iota$ which, under the boundary map $\dt$ in \eqref{rgf784gf79h398fh3}, maps to an obstruction class to splitting $\Yfr$.\qed
\end{THM}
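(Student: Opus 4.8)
### Proof Proposal

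The plan is to extract the section $\phi(\iota)$ directly from the embedding data and then trace it through the diagram \eqref{rgf784gf79h398fh3}. First I would recall that an embedding $\iota : \Yfr \subset S(X, T^*_{X,-})$ of a supermanifold $\Yfr$ (modelled on $S(Y, T^*_{Y,-})$) into a \emph{split} superspace is, locally on a trivialising cover $(U_\al)$ of $X$, nothing more than a choice of local isomorphism identifying $\Oc_\Yfr|_{U_\al \cap Y}$ with the restriction $\Oc_{S(X,T^*_{X,-})}|_{U_\al \cap Y}$ modulo the ideal cutting out $\Yfr$; since $\Yfr$ is a supermanifold (its reduced space $Y$ is smooth), such local identifications exist by the local triviality of supermanifolds. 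The discrepancy between two such local choices on $U_\al \cap U_\be \cap Y$ is an automorphism of the local split model of $\Yfr$ over $U_\al \cap U_\be \cap Y$ that is the identity modulo the fermionic ideal of $\Yfr$ — but because the embedding is \emph{even} and $\Yfr$ need not be split inside $S(X,T^*_{X,-})$ in the naive way, these discrepancies, read at the leading nontrivial order, assemble into a global object. The key point is \emph{where} they live: the failure of the local identifications to be globally consistent as identifications of $S(X,T^*_{X,-})|_Y$ is measured in the normal-type sheaf $N_{T^*_{Y,-}, T^*_{X,-}}$ from the bottom row of \eqref{fh7f4hf980f3}, and the even embedding hypothesis guarantees (via the stated vanishing $N^{(2k+1)} = 0$) that there is no odd-degree contribution to worry about. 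I would show the leading-order data is in fact a \emph{global section}, i.e.\ a degree-$2$ cocycle condition is automatically satisfied, which is where the naturality of the diagram \eqref{fh7f4hf980f3} is used — the section $\phi(\iota)$ is the image, in $H^0(Y, N_{T^*_{Y,-}, T^*_{X,-}})$, of the embedding-normal data.

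Next I would compute $\dt(\phi(\iota))$ using the long exact sequence coming from the bottom row of \eqref{fh7f4hf980f3}. By definition of the connecting homomorphism, one lifts $\phi(\iota)$ locally to sections of $\Qcl_{T^*_{X,-}}|_Y$ — concretely, to the local automorphisms of $S(X,T^*_{X,-})|_{U_\al\cap Y}$ implementing the local identifications of $\Yfr$ — and then takes the Čech differential, which lands in $Z^1(Y, \Qcl_{T^*_{Y,-}})$ because the normal components cancel. The resulting class in $H^1(Y, \Qcl_{T^*_{Y,-}})$ is precisely the transition cocycle of $\Yfr$ regarded as a thickening of $S(Y,T^*_{Y,-})$, i.e.\ the obstruction class to splitting $\Yfr$ in the sense of Green/Berezin recalled in Section~\ref{rhf784gf74hf98j0jf309}. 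Here I would invoke the identification of $H^1(Y,\Qcl_{T^*_{Y,-}})$ with the receptacle of splitting obstructions, and the fact that the obstruction cocycle of $\Yfr$ is, by construction, read off from exactly the transition automorphisms between local split trivialisations — which are what the Čech differential of the lift produces. Well-definedness (independence of the chosen cover, the chosen local trivialisations, and the chosen lifts) follows from the standard argument that different choices change the cocycle by a coboundary, so the \emph{class} is unambiguous.

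The main obstacle I anticipate is the bookkeeping at the interface between the two rows of \eqref{fh7f4hf980f3}: one must be careful that the even-embedding hypothesis really does kill all the odd-degree obstructions that would otherwise obstruct $\phi(\iota)$ from being a global section, and that the degree-$2$ piece — which is the only one that matters for the Supermanifold Non-Splitting Theorem — is correctly isolated. In particular, the transition functions of $\Yfr$ inside $S(X,T^*_{X,-})$ may a priori involve higher-order corrections that mix into several graded components of $\Qcl_{T^*_{X,-}}|_Y$, and one must verify that reducing modulo $\Qcl_{T^*_{Y,-}}$ and projecting to the leading order gives a genuine cocycle rather than a cochain with a nonzero coboundary obstruction. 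This is handled by the evenness of $\iota$ together with the exactness of the bottom row of \eqref{fh7f4hf980f3}, but it is the step that requires the most care. Everything else — existence of local trivialisations, naturality of the diagram, the definition of $\dt$ — is formal once this is in place, and since the statement is attributed to \cite{BETTEMB} I would present this as a recollection of that argument rather than reprove every diagram chase in full detail.
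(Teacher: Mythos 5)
First, a remark on what you are being compared against: the paper does not prove Theorem \ref{rf874gf79hf83j093j9f} — it is quoted from \cite{BETTEMB} with a \emph{qed} — but Section \ref{rgf87gf87f9h30fj309j} does spell out the construction of $\phi(\iota)$ explicitly for superspace varieties, and that construction exposes a genuine problem with your first paragraph. You build $\phi(\iota)$ out of the \emph{discrepancies between local identifications on double overlaps} and then claim these ``assemble into a global object'' after checking a cocycle condition. That data is a \v Cech $1$-cochain, and no leading-order projection turns a $1$-cochain into a global section; what those overlap discrepancies actually compute, valued in $\Qcl_{T^*_{Y,-}}$, is the obstruction cocycle itself — i.e.\ the \emph{target} $\dt(\phi(\iota))\in H^1(Y,\Qcl_{T^*_{Y,-}})$, not the source $\phi(\iota)\in H^0(Y,N_{T^*_{Y,-},T^*_{X,-}})$. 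The correct source of $\phi(\iota)$ is the globally defined ideal sheaf $\Ic_\Yfr\subset\Oc_{S(X,T^*_{X,-})}$ of the embedded $\Yfr$: writing local generators as $f = g + h\q^k+\cdots$ with $g$ generating the reduced ideal $I_Y$, the assignment $g\mapsto h\q^k$ (the component of the tautological lift $I_Y\dashrightarrow \Ic_\Yfr$ lying in $\wedge^{\geq 1}T^*_{X,-}$, reduced mod $I_Y$) gives an $\Oc_Y$-linear map $I_Y/I_Y^2\ra\wedge^k T^*_{Y,-}$, i.e.\ a section of $N^{(k)}_{T^*_{Y,-},T^*_{X,-}}\cong\mathcal Hom_{\Oc_Y}(I_Y/I_Y^2,\wedge^k T^*_{Y,-})$. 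Globality is automatic because the ideal sheaf is global — there is no ``degree-$2$ cocycle condition'' to verify, and the evenness of $\iota$ is used only to kill the odd-graded components of $N$, not to glue $\phi$.

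Your second paragraph is essentially correct and matches the intended argument: local lifts of $\phi(\iota)$ to $\Qcl_{T^*_{X,-}}|_Y$ are exactly the local ambient automorphisms effecting local splittings of $\Yfr$ (which exist since $\Yfr$ is locally split), their \v Cech differential lands in $Z^1(Y,\Qcl_{T^*_{Y,-}})$ by exactness of the bottom row of \eqref{fh7f4hf980f3}, and that cocycle is by construction the transition/obstruction cocycle of $\Yfr$ relative to its split model; independence of choices is the usual coboundary argument. So the architecture and the computation of $\dt$ are fine; the gap is confined to the construction of $\phi(\iota)$, where the roles of the $0$-cochain (embedding-normal data from the ideal) and the $1$-cochain (overlap discrepancies of local splittings) have been interchanged.
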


\subsection{Normal Embeddings}
Observe that the diagram in \eqref{rgf784gf79h398fh3} depends essentially on the embedding of models $(Y, T^*_{Y, -})\subset (X, T^*_{X, -})$.\footnote{From \cite{BETTEMB}, an embedding of models $(Y, T^*_{Y, -})\subset (X, T^*_{X, -})$ is defined by \emph{(i)} an embedding $j: Y\subset X$ and \emph{(ii)} a surjection $j^*T^*_{X, -}\ra T^*_{Y, -}\ra0$. The embedding is smooth if $j$ is smooth.} This leads to the following definitions.

\begin{DEF}\label{rfj9u4hf89hf89j09fj30}
\emph{Let $(Y, T^*_{Y, -})\subset (X, T^*_{X, -})$ be an embedding of models. We say this embedding is \emph{$k$-normal} if the boundary map $\dt$ in \eqref{rgf784gf79h398fh3} is injective in degree $k$, i.e., if $\dt :  H^0\big(Y, N^{(k)}_{T^*_{Y, -}, T^*_{X,-}}\big)\ra H^1\big(Y,\Qcl^{(k)}_{T^*_{Y, -}}\big)$ is injective. If the embedding of models is $k$-normal for all $k>1$, then it is referred to as \emph{normal}.}
\end{DEF}

\noindent
Any embedding of superspaces $\Yfr\subset \Xfr$ begins with a given embedding of models $(Y,T^*_{Y, -})\subset (X, T^*_{X, -})$. This is explained in more detail in \cite{BETTEMB}. We mention it now only in order to justify the following definition.

\begin{DEF}\label{cnvjbbiufnido}
\emph{An embedding of superspaces $\Yfr\subset \Xfr$ is said to be $k$-normal (resp. normal) if the corresponding embedding of models $(Y, T^*_{Y, -})\subset (X, T^*_{X, -})$ is $k$-normal (resp. normal).}
\end{DEF}

\noindent
In the special case where $k = 2$ the Supermanifold Non-Splitting Theorem will imply the following. 

\begin{LEM}\label{uiefh98hf983fj309jf03}
Let $\Yfr\subset S(X, T^*_{X, -})$ be a smooth, $2$-normal embedding and suppose the global section $\phi$ associated to this embedding lies in $H^0\big(Y, N_{T^*_{Y, -};T^*_{X, -}}^{(2)}\big)$. If $\phi\neq0$ then $\Yfr$ is non-split.\qed
\end{LEM}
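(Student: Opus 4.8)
The plan is to chain together two results that have already been established in the excerpt: Theorem~\ref{rf874gf79hf83j093j9f} and the Supermanifold Non-Splitting Theorem. The hypothesis gives us a smooth embedding $\Yfr\subset S(X,T^*_{X,-})$, so Theorem~\ref{rf874gf79hf83j093j9f} immediately produces a global section $\phi(\iota)\in H^0\big(Y,N_{T^*_{Y,-},T^*_{X,-}}\big)$ whose image under the boundary map $\dt$ in \eqref{rgf784gf79h398fh3} is an obstruction class to splitting $\Yfr$. The extra hypotheses of the lemma are precisely what is needed to promote this to a non-splitting conclusion: first, $\phi$ is assumed to lie in the degree-$2$ piece $H^0\big(Y,N^{(2)}_{T^*_{Y,-},T^*_{X,-}}\big)$, so that $\dt(\phi)$ lives in $H^1\big(Y,\Qcl^{(2)}_{T^*_{Y,-}}\big)$; second, $2$-normality of the embedding says exactly that $\dt$ is injective in degree $2$.

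First I would note that since $\phi\neq 0$ by assumption and $\dt$ is injective in degree $2$ (this is the definition of $2$-normality, Definition~\ref{rfj9u4hf89hf89j09fj30}, combined with Definition~\ref{cnvjbbiufnido}), we conclude $\dt(\phi)\neq 0$ in $H^1\big(Y,\Qcl^{(2)}_{T^*_{Y,-}}\big)$. Then I would invoke Theorem~\ref{rf874gf79hf83j093j9f} to identify $\dt(\phi)$ with an obstruction class to splitting $\Yfr$, arising from an atlas for $\Yfr$. In particular $\Yfr$ admits an atlas defining a non-vanishing obstruction in degree $2$, i.e.\ a non-zero element of $H^1\big(Y,\Qcl^{(2)}_{T^*_{Y,-}}\big)$.

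Finally I would apply the Supermanifold Non-Splitting Theorem, whose hypothesis — a complex supermanifold admitting an atlas with non-vanishing degree-$2$ obstruction — is now verified (one should remark that $Y$ is a complex manifold here since $\Yfr$ is a supermanifold, so $T^*_{Y,-}$ is locally free over a smooth reduced space and the theorem applies). The conclusion is that $\Yfr$ is non-split, which is the statement of the lemma.

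I do not expect any serious obstacle: the lemma is essentially a bookkeeping corollary assembling Theorem~\ref{rf874gf79hf83j093j9f}, the definition of $2$-normality, and the Supermanifold Non-Splitting Theorem. The only point requiring a little care is the compatibility of the two notions of ``obstruction in degree $2$'': the class $\dt(\phi)$ produced by Theorem~\ref{rf874gf79hf83j093j9f} must be the obstruction class associated to an \emph{atlas}, as required by the Supermanifold Non-Splitting Theorem, rather than merely an element of $H^1$ that happens to be an obstruction for some thickening. This is guaranteed by the construction of $\phi(\iota)$ in \cite{BETTEMB}, where the section $\phi(\iota)$ is built from the embedding data and its image under $\dt$ is realized concretely via the transition functions of an induced atlas on $\Yfr$; I would cite that construction rather than reproduce it.
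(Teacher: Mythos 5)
Your proposal is correct and follows exactly the route the paper intends: the lemma is stated with no written proof (it carries a \qed), being presented as an immediate consequence of Theorem~\ref{rf874gf79hf83j093j9f}, the injectivity of $\dt$ in degree $2$ from $2$-normality, and the Supermanifold Non-Splitting Theorem. Your additional remark about $\dt(\phi)$ being the obstruction class of an actual atlas is a sensible point of care, resolved as you say by the construction in \cite{BETTEMB}.
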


\subsection{Projective Superspace Varieties}
In the sections to follow we will be more explicit in our description of subvarieties and splittings. Presently, our objective is to prove the following. 

\begin{THM}\label{rhf87f9h3f98h3fh03}
Smooth, positive, projective superspace varieties are normal.
\end{THM}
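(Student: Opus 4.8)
The plan is to reduce the statement ``smooth, positive, projective superspace varieties are normal'' to a cohomology-vanishing computation on the reduced variety $V_0 \subset \Pbb^m_\Cbb$, and then run a Serre-type vanishing argument driven by the positivity hypothesis. By Definition \ref{rfj9u4hf89hf89j09fj30} and Definition \ref{cnvjbbiufnido}, we must show that for the embedding of models $(V_0, T^*_{V_0, -}) \subset (\Pbb^m_\Cbb, T^*_{\Pbb^m_\Cbb, -})$ underlying $V(F) \subset \Pbb^{m|n}_\Cbb(1|\vec b)$, the boundary map $\dt : H^0\big(V_0, N^{(k)}_{T^*_{V_0, -}, T^*_{\Pbb^m_\Cbb, -}}\big) \to H^1\big(V_0, \Qcl^{(k)}_{T^*_{V_0, -}}\big)$ is injective for every $k > 1$. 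From the long exact cohomology sequence associated with the bottom row of \eqref{fh7f4hf980f3}, injectivity of $\dt$ in degree $k$ is equivalent to surjectivity of the restriction map $H^0\big(V_0, \Qcl^{(k)}_{T^*_{\Pbb^m_\Cbb, -}}|_{V_0}\big) \to H^0\big(V_0, N^{(k)}_{T^*_{V_0, -}, T^*_{\Pbb^m_\Cbb, -}}\big)$; so I would aim to prove this surjectivity instead.

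First I would unwind the relevant sheaves concretely. By Theorem \ref{rhhf8h98fh93893j3jf0}, $T^*_{\Pbb^m_\Cbb, -} = \oplus_j \Oc_{\Pbb^m_\Cbb}(-b_j)$ with all $b_j > 0$, and $T^*_{V_0, -}$ is its restriction to $V_0$. Using the even-degree identification \eqref{4fh84hf983hf89f30}, $\Qcl^{(2k)}_{T^*_{\Pbb^m_\Cbb, -}} \cong T_{\Pbb^m_\Cbb} \otimes \wedge^{2k}\big(\oplus_j \Oc_{\Pbb^m_\Cbb}(-b_j)\big)$, which is a direct sum of sheaves of the form $T_{\Pbb^m_\Cbb}(-b_{i_1} - \cdots - b_{i_{2k}})$; since each $b_{i_\ell} \geq 1$, for $k \geq 1$ these are twists of $T_{\Pbb^m_\Cbb}$ by $\Oc(-d)$ with $d \geq 2k \geq 2$. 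Next I would set up the three-term filtration relating $N^{(k)}_{T^*_{V_0, -}, T^*_{\Pbb^m_\Cbb, -}}$, $\Qcl^{(k)}_{T^*_{\Pbb^m_\Cbb, -}}|_{V_0}$ and $\Qcl^{(k)}_{T^*_{V_0, -}}$ (the bottom row of \eqref{fh7f4hf980f3}), and show $H^1\big(V_0, \Qcl^{(k)}_{T^*_{V_0, -}}\big)$ is ``small enough'' — more precisely, that the composite $H^0(\Qcl^{(k)}_{T^*_{\Pbb^m_\Cbb,-}}|_{V_0}) \to H^0(N^{(k)})$ is onto. The natural tool is: restriction from $\Pbb^m_\Cbb$ to $V_0$ of the ambient obstruction sheaf is surjective on $H^0$ provided $H^1$ of (the ideal sheaf of $V_0$) twisted by $\Qcl^{(k)}_{T^*_{\Pbb^m_\Cbb,-}}$ vanishes, and this in turn follows from Serre vanishing once the twist is sufficiently negative, which is exactly what positivity of $\vec b$ buys us — the $2k$-fold exterior power contributes a twist by $\Oc(-\sum b_{i_\ell})$ that dominates the (fixed, positive-degree) generators of the ideal $I_{V_0}$ and of $T_{\Pbb^m_\Cbb}$ (which is generated in degrees $\leq 0$ after the Euler sequence, so $T_{\Pbb^m_\Cbb}(-d)$ has no $H^0$ and controllable $H^1$ for $d$ large). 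I would also need to handle $N^{(k)}$ for the non-hypersurface case via the Koszul resolution of $I_{V_0}$; the hypersurface case is the clean model.

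The main obstacle — and the place that needs genuine care rather than bookkeeping — is obtaining the vanishing \emph{uniformly in $k \geq 2$} rather than only for $k \gg 0$. For small $k$ (notably $k = 2$) the twist $-\sum_{\ell=1}^{2k} b_{i_\ell}$ may be as mild as $-4$ (when all $b_j = 1$), so a crude ``Serre vanishing for large twist'' will not apply directly; I would instead argue on $\Pbb^m_\Cbb$ using the explicit cohomology of $T_{\Pbb^m_\Cbb}(-d) = \Oc_{\Pbb^m_\Cbb}(-d)^{\oplus(m+1)} / \Oc_{\Pbb^m_\Cbb}(-d-1)$-type presentations (the dual Euler sequence), together with Bott's formula, to see that $H^1(\Pbb^m_\Cbb, T_{\Pbb^m_\Cbb}(-d)) = 0$ for all $d \geq 2$ and $m \geq 2$ (the case $m=1$ is separate and governed by the splitting of bundles on $\Pbb^1_\Cbb$). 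Combining this with the Koszul/ideal-sheaf bookkeeping and the positivity inequality $\sum b_{i_\ell} \geq 2k$, I expect the surjectivity of $H^0(\Qcl^{(k)}_{T^*_{\Pbb^m_\Cbb,-}}|_{V_0}) \to H^0(N^{(k)})$ — hence $k$-normality — to hold for every $k \geq 2$, which is precisely the claim. The smoothness hypothesis enters to ensure the conormal sequence of $V_0 \subset \Pbb^m_\Cbb$ is exact and locally free, so that the Koszul complex and the morphism of short exact sequences \eqref{fh7f4hf980f3} behave as stated.
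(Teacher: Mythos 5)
There is a genuine gap, and it occurs at the very first reduction step. From the long exact sequence attached to the bottom row of \eqref{fh7f4hf980f3},
\[
H^0\big(V_0, \Qcl^{(k)}_{T^*_{V_0,-}}\big) \lra H^0\big(V_0, \Qcl^{(k)}_{T^*_{\Pbb^m_\Cbb,-}}|_{V_0}\big) \lra H^0\big(V_0, N^{(k)}\big) \stackrel{\dt}{\lra} H^1\big(V_0, \Qcl^{(k)}_{T^*_{V_0,-}}\big),
\]
exactness at $H^0(N^{(k)})$ says that the image of $H^0\big(\Qcl^{(k)}_{T^*_{\Pbb^m_\Cbb,-}}|_{V_0}\big) \ra H^0(N^{(k)})$ equals $\ker\dt$. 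Hence $\dt$ is injective if and only if that map is the \emph{zero} map, whereas the surjectivity you set out to prove is equivalent to $\dt=0$. Since $H^0(N^{(k)})$ is nonzero in precisely the cases this theorem is built for (it is $\Cbb^n$ in degree $2$ for a smooth quadric, by \eqref{rhf784gf873hf983h}), your target statement is not merely insufficient --- it is the negation of $k$-normality there, and establishing it would make Lemma \ref{uiefh98hf983fj309jf03} and Theorem \ref{rfh84hf98hf803jf03} vacuous, since every normal obstruction section would then map to a trivial obstruction class. The correct sufficient condition, and the one the paper proves, is the vanishing $H^0\big(V_0, \Qcl^{(2k)}_{T^*_{\Pbb^m_\Cbb,-}}|_{V_0}\big) = 0$ for all $k\geq1$ (equivalently, surjectivity of the \emph{other} map, $H^0\big(\Qcl^{(2k)}_{T^*_{V_0,-}}\big) \ra H^0\big(\Qcl^{(2k)}_{T^*_{\Pbb^m_\Cbb,-}}|_{V_0}\big)$).

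The ingredients you assemble afterwards are essentially the right ones, but they are aimed at the wrong statement, and once re-aimed the argument becomes much shorter than you anticipate: no Koszul resolution of $I_{V_0}$, no control of $H^1$ of $I_{V_0}\otimes\Qcl$, and no uniformity-in-$k$ issue arise. One writes $\Qcl^{(2k)}_{T^*_{\Pbb^m_\Cbb,-}}$ as a direct sum of sheaves $T_{\Pbb^m_\Cbb}(-b_{i_1}-\cdots-b_{i_{2k}})$ with total twist at most $-2k\leq -2$ (note the relevant smallest case is graded degree $2$, twist $-2$, not $-4$ as you state), and then the restriction sequence $0 \ra \Gc(-d)\ra \Gc \ra j_*j^*\Gc\ra 0$ sandwiches $H^0\big(V_0, T_{\Pbb^m_\Cbb}(-2k)|_{V_0}\big)$ between $H^0\big(\Pbb^m_\Cbb, T_{\Pbb^m_\Cbb}(-2k)\big)$ and $H^1\big(\Pbb^m_\Cbb, T_{\Pbb^m_\Cbb}(-2k-d)\big)$, both of which vanish by Bott's formula. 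Smoothness enters only through the identification $H^\ell(V_0,\Fc)\cong H^\ell(\Pbb^m_\Cbb, j_*\Fc)$ and the exactness of the restriction sequence, not through the conormal or Euler-sequence considerations you describe.
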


\begin{proof}
To a smooth embedding $j : Y\subset X$ and a sheaf $\Fc$ on $Y$, we have:
\begin{align}
H^\ell\big(Y, \Fc\big) \cong H^\ell\big(X, j_*\Fc\big).
\label{rhf7hf9h38fh30}
\end{align}
Hence the sheaf cohomology of subspaces $Y$ of $X$ can be calculated on the ambient space $X$. We apply this to the case of a smooth projective variety $V$ of degree $d$. Let $j : V\subset \Pbb^m_\Cbb$ be the smooth embedding. For any abelian sheaf $\Gc$ on $\Pbb^m_\Cbb$ we have the short exact sequence,
\begin{align}
0\lra \Gc(-d)\lra \Gc \lra j_*j^*\Gc \lra0
\label{rjoihhihioiejoe}
\end{align}
where $\Gc(-d)= \Gc\otimes\Oc_{\Pbb_\Cbb^m}(-d)$. Since $j$ is smooth we can identify $j^*\Gc$ with the restriction $\Gc|_V$. By \eqref{rhf7hf9h38fh30} we have the following exact pieces aiding in the calculation of the cohomology of $j^*\Gc = \Gc|_V$:
\begin{align}
H^\ell\big(\Pbb^m_\Cbb, \Gc\big)
\lra 
H^\ell\big(V, \Gc|_V\big)
\lra 
H^{\ell+1}\big(\Pbb^m_\Cbb, \Gc(-d)\big)
\label{rg784gf87hf98309fj9f}
\end{align}
Now set $\Gc = \Qcl_{T^*_{\Pbb_\Cbb^{m}, -}}$, the obstruction sheaf of the ambient superspace $\Pbb^{m|n}_\Cbb$. By exactness of the rows in \eqref{rgf784gf79h398fh3}, this theorem will follow if we can show
 $H^0(V,  \Qcl_{T^*_{\Pbb_\Cbb^{m}, -}}|_V) = (0)$ in even degree and for any $V$. This is what we will show now. By Theorem \ref{rhhf8h98fh93893j3jf0} we know $T^*_{\Pbb^{m}_\Cbb, -} = \Cbb^n\otimes \Oc_{\Pbb^m_\Cbb}(-1) = \oplus_{j = 1}^n\Oc_{\Pbb^m_\Cbb}(-1)$. From the characterisation of obstruction sheaves in \eqref{4fh84hf983hf89f30} we have:
\begin{align*}
\Qcl_{T^*_{\Pbb_\Cbb^{m}, -}}^{(2k)}
= 
T_{\Pbb^m_\Cbb}\otimes \wedge^{2k}T^*_{\Pbb^{m|n}_\Cbb, -}
=
\oplus^{\binom{n}{2k}}  T_{\Pbb^m_\Cbb}(-2k).
\end{align*}
Now by \eqref{rg784gf87hf98309fj9f} with $\Gc = \Qcl_{T^*_{\Pbb_\Cbb^{m}, -}}$ we have the exact piece,
\begin{align}
\Cbb^{\binom{n}{2k}}\otimes H^0\big(\Pbb^m_\Cbb, T_{\Pbb^m_\Cbb}(-2k)\big)
\lra
\Cbb^{\binom{n}{2k}}& \otimes H^0\big(V, T_{\Pbb^m_\Cbb}(-2k)|_V\big)
\label{titu4j9fj49}
\\
&\lra
\Cbb^{\binom{n}{2k}}\otimes H^1\big(\Pbb^m_\Cbb, T_{\Pbb^m_\Cbb}(-2k-d)\big).
\label{rgf784gf79hf983093j}
\end{align}
Bott's formula asserts the left-most and right-most cohomology groups in \eqref{titu4j9fj49} resp., \eqref{rgf784gf79hf983093j} vanish for $k > 0$ and any $d>0$. Hence $H^0\big(V, T_{\Pbb^m_\Cbb}(-2k)|_V\big) = (0)$ for all $k>0$ and \emph{any} degree $d$, smooth projective variety $V$. We can thus conclude that the embedding of models $(V, T_{V, -}^*)\subset (\Pbb^m_\Cbb, T_{\Pbb_\Cbb, -}^*)$ is normal. This argument applies verbatim with $\Pbb^{m|n}_\Cbb$ replaced by $\Pbb^{m|n}_\Cbb(1|\vec b)$ with $\vec b>0$. This theorem now follows. 
\end{proof}

\begin{REM}
\emph{For subvarieties of non-positive projective superspaces, the left-most cohomology group in \eqref{rgf784gf79hf983093j} need not vanish. As such we cannot readily conclude normality. Where the objectives of this article are concerned, this distinction between positive and non-positive projective superspaces is of fundamental importance (c.f., Remark \ref{4fg784gf7hf98h3f803jf30}).}
\end{REM}

\noindent
Before discussing applications of Theorem \ref{rhf87f9h3f98h3fh03} we digress to explain how exactly one assigns sections in $H^0(Y, N_{T^*_{Y, -}; T^*_{X, -}})$ to subspaces $\Yfr\subset S(X, T^*_{X, -})$ in the case where $\Yfr$ is a projective superspace variety, i.e., when $S(X, T^*_{X, -}) = \Pbb^{m|n}_\Cbb(1|\vec b)$.

\section{Normal Obstruction Sections}
\label{rgf87gf87f9h30fj309j}

\noindent
In Theorem \ref{rf874gf79hf83j093j9f} we see that when a supermanifold $\Yfr$ is embedded in a split superspace $S(X, T^*_{X, -})$, the obstruction class to splitting $\Yfr$ will lie in the image of some global section, which we term below.

\begin{DEF}\label{ruifh398fh983fh0f09j3}
\emph{Let $\Yfr$ be a supermanifold and $S(X, T^*_{X, -})$ some split model. To a given smooth embedding $\iota: \Yfr\subset S(X, T^*_{X, -})$, any global section $\phi\in H^0(Y, N_{T^*_{Y, -}; T^*_{X, -}})$ which maps to an obstruction to splitting $\Yfr$ will be referred to as a \emph{normal obstruction section associated to $\iota$}, or simply a \emph{normal obstruction section} with the embedding $\iota$ understood.}
\end{DEF}

\noindent
In order to get a more explicit description of the obstruction normal section we will need to firstly characterise $N_{T^*_{Y, -}; T^*_{X, -}}$ via more recognisable sheaves. This is done in \cite{BETTEMB} and we will only state the characterisation here: let $(Y, T^*_{Y, -})\subset (X, T^*_{X, -})$ be an even embedding of models\footnote{the embedding $(Y, T^*_{Y, -})\subset (X, T^*_{X, -})$ is \emph{even} if $T^*_{X, -}|_Y \cong T^*_{Y, -}$.} and let $I_Y$ be the ideal sheaf of $Y\subset X$. Then,
\begin{align}
N_{T^*_{Y, -}; T^*_{X, -}}^{(2k)}
\cong 
\mathcal Hom_{\Oc_Y}\big(I_Y/I_Y^2, \wedge^{2k}T^*_{Y, -}\big).
\notag
\end{align}
Accordingly, in what follows, we will construct $\Oc_Y$-linear homomorphisms $I_Y/I_Y^2\ra \wedge^{2k}T^*_{Y, -}$ from the data of a superspace variety. These homomorphisms will be the obstruction normal section associated to the variety.

\begin{REM}
\emph{It is instructive compare subspaces of split models with infinitesimal deformations of subschemes. The obstruction normal section is analogous to the class labelling infinitesimal deformations of subschemes, as detailed in \cite{HARTDEF}.}
\end{REM}

\subsection{Affine Superspace Varieties}
Let $\Cbb[x|\q]$ be the coordinate algebra of affine superspace $\Abb_\Cbb^{m+1|n}$ and $F = (f^\al)\in \Cbb[x|\q]$ a finite collection of even, irreducble polynomials. Let $\Yfr = V(F)$ be the variety defined by the locus $\{f^\al = 0\mid \forall \al\}$ and suppose $f^\al = g^\al + h^\al\q^k + \cdots$. The reduced variety $V_0$ is the locus $\{g^\al = 0\mid\forall\al\}$ in $\Abb_\Cbb^{m+1}$. Observe that with $f^\al$ we can tautologically define a lift $g^\al\dashmapsto f^\al$ for all $\al$. This defines a homomorphism of $\Cbb[x]$-modules $I_G\ra \Cbb[x|\q]$ for $I_G\subset \Cbb[x]$ the ideal sheaf generated by $G = (g^\al)$. We now recall some basic properties. Let $J = \big(\q_1, \ldots, \q_n\big)\subset \Cbb[x|\q]$ be the fermionic ideal. Recall $\Cbb[x|\q] = \wedge^\bt_{\Cbb[x]}\big(J/J^2\big)$. Denote by $\pi^k$ the projection $\Cbb[x|\q]\ra \wedge^k_{\Cbb[x]}\big(J/J^2\big)$. Composing this with the lift $G\dashmapsto F$ defines a map $\rho(F): (g^\al)\mapsto (h^\al\q^k)$ and hence a $\Cbb[x]$-module homomorphism $I_G \ra \wedge^k_{\Cbb[x]}\big(J/J^2\big)$. Now recall that we have $p: \Cbb[x]\ra \Cbb[x]/I_G = \Oc(V_0)$ with respect to which we can form the induced module $p_*\wedge^k_{\Cbb[x]}\big(J/J^2\big) = 
 \wedge^k T_{V_0, -}^*$. Composing the lift $\rho(F)$ with the projection $\pi^k$ and the map $\wedge^k_{\Cbb[x]}\big(J/J^2\big)\ra \wedge^kT^*_{V_0, -}$ yields the homomorphism $\overline{\rho(F)} : I_G \ra \wedge^k T_{V_0, -}^*$. By construction $\overline{\rho(F)}$ sends $I_G^2 \ra (0)$. Hence $\overline{\rho(F)} \in \Hom_{\Oc(V_0)}(I_G/I_G^2, \wedge^kT^*_{V_0, -})$. The obstruction normal section associated to the affine superspace variety $V(F)$ is this homomorphism $\overline{\rho(F)}$.

\subsection{Projective Superspace Varieties}
We consider here varieties in projective superspaces of the form $\Pbb_\Cbb^{m|n}(1|\vec b)$. Recall that $\Pbb_\Cbb^{m|n}(1|\vec b)$ is covered by locally affine pieces $\Abb^{m|n}_\Cbb$. Denote by $(U_\mu)_{\mu = 1, \ldots, m+1}$ the affine covering of $\Pbb_\Cbb^{m|n}$ described in the proof of Theorem \ref{rhhf8h98fh93893j3jf0}. If $V\subset \Pbb_\Cbb^{m|n}(1|\vec b)$ is a subvariety, then $V\cap U_\mu\subset \Abb^{m|n}_\Cbb$ is an affine superspace variety. Write $V = V(F)$ for $F = (f^\al)$ a finite collection of even, irreducible, homogeneous polynomials in $\Cbb[x|\q]$. As in the previous section, suppose $f^\al = g^\al + h^\al\q^k + \cdots$, and denote by $V_0\subset \Pbb_\Cbb^m$ the variety $\{g^\al = 0\mid \forall \al\}$. Let $I_{V_0}\subset \Oc_{\Pbb^m_\Cbb}$ be the ideal sheaf defining $V_0\subset \Pbb^m_\Cbb$. With respect to the covering $(U_\mu)$, denote $V(f)_\mu := V(f)\cap U_\mu$ the subvariety in $\Abb^{m|n}_\Cbb$. The construction in the previous section assigns to $V(F)_\mu$ a morphism $\overline{\rho(F)}_\mu : \big(I_{V_0}/I_{V_0}^2\big)(U_\mu) \ra \wedge^kT^*_{V_0, -}(U_\mu)$. Clearly,
\begin{align}
\left.\overline{\rho(F)}\right|_{U_\mu} = \overline{\rho(F)}_\mu.
\label{r784gf78hf983fj030}
\end{align}
We therefore obtain from $V(F)$ a morphism of sheaves $\overline{\rho(F)} : I_{V_0}/I_{V_0}^2\ \ra \wedge^kT^*_{V_0, -}$. This is the normal obstruction section associated to $V(F)$.

\begin{REM}
\emph{The obstruction normal section $\overline{\rho(F)}$ depends on choice of homogeneous coordinate ring $\Cbb[x|\q]$ and hence on the embedding $V(F)\subset \Pbb^{m|n}_\Cbb(1|\vec b)$. If the variety $V(F)$ is positive, i.e., a subvariety of a positive, projective superspace $\Pbb_\Cbb^{m|n}(1|\vec b)$, then homogeneous non-splitting in Theorem \ref{rhf74hf98hf98hf803jf0} implies that $\overline{\rho(F)}$ depends on $\Cbb[x|\q]$ only upto re-scalings by non-zero, complex numbers. In particular, the line defined by $\overline{\rho(F)}$ in the projectivisation $\Pbb\big(\Hom_{\Oc_{V_0}}(I_{V_0}/I_{V_0}^2, \wedge^kT^*_{V_0, -})\big)$ is an invariant of $V(F)$. These statements can also be deduced from normality of positive, projective superspace varieties in Theorem \ref{rhf87f9h3f98h3fh03}.}
\end{REM}

\noindent
At the beginning of Section \ref{rjhf894h89j988h3} it was observed that the odd variables in $\Cbb[x|\q]$, if positively weighted, ought to be viewed as `formal homogeneous coordinates' for projective superspace. Hence any positive, projective, superspace variety $V(F)$ ought to also be viewed formally. The sheaf morphism $\overline{\rho(F)}$ need not necessarily be formal however. It is formal if and only if the sheaf $\mathcal Hom_{\Oc_{V_0}}\big(I_{V_0}/I_{V_0}^2, \wedge^kT^*_{V_0, -}\big)$ does \emph{not} have any global sections.

\begin{PROP}\label{rh89f98f0j309jf03}
Let $V(F)$ be a positive, projective, superspace variety. Suppose there does not exist any global section $\vp$ such $\vp|_{U_\mu} = \overline{\rho(F)}_\mu$. Then $V(F)$ is split.
\end{PROP}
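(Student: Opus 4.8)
The plan is to translate the hypothesis into an algebraic statement about the defining equations $F = (f^\al)$ and then to strip off their odd-variable dependence degree by degree, ending with $(F) = (G)$ in $\Cbb[x|\q]$, where $G = (g^\al)$ and $g^\al = f^\al(x|0)$. This identifies $V(F)$ with the split model $S(V_0, T^*_{V_0, -})$, and in particular shows it is split.

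First I would write $f^\al = g^\al + h^\al\q^k + (\text{terms of } \q\text{-degree} \geq k+2)$, where $k \geq 2$ is the least $\q$-degree occurring (if none occurs then $F = G$ already). The normal obstruction section $\overline{\rho(F)}$ of Section \ref{rgf87gf87f9h30fj309j} is a morphism of sheaves $I_{V_0}/I_{V_0}^2 \ra \wedge^k T^*_{V_0, -}$, i.e.\ a global section of $\mathcal Hom_{\Oc_{V_0}}\big(I_{V_0}/I_{V_0}^2, \wedge^k T^*_{V_0, -}\big)$, and by \eqref{r784gf78hf983fj030} it restricts to the $\overline{\rho(F)}_\mu$; so the hypothesis --- that $\overline{\rho(F)}$ is formal, equivalently that this sheaf has no nonzero global sections --- forces $\overline{\rho(F)} = 0$. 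Unwinding the construction, this says exactly that each coefficient of $\q_I$ in $h^\al\q^k$ lies in the homogeneous ideal $I_{V_0} = (g^\al)$. I would then absorb the order-$k$ term: replace each $f^\al$ by $f_1^\al = \sum_\beta(\delta^\al_\beta - \gamma^{\al\beta})f^\beta$, where $\gamma^{\al\beta}\in\Cbb[x|\q]$ has $\q$-degree $k$ and is chosen (using the membership just established) so that $\sum_\beta \gamma^{\al\beta}g^\beta$ equals the order-$k$ part of $f^\al$. A short check shows the $f_1^\al$ remain homogeneous of the original degrees, that $(f_1^\al) = (f^\al)$ since $\id - (\gamma^{\al\beta})$ is unipotent ($\q$'s being nilpotent), and that $f_1^\al = g^\al + (\text{terms of } \q\text{-degree}\geq k+2)$; so $V(F_1) = V(F)$, with the odd dependence pushed up by two.

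The remaining point --- and the step I expect to be the main obstacle --- is to iterate, which requires $H^0\big(V_0, \mathcal Hom_{\Oc_{V_0}}(I_{V_0}/I_{V_0}^2, \wedge^{k'} T^*_{V_0, -})\big) = 0$ for \emph{every} $k' \geq k$, so that each successive normal obstruction section vanishes automatically. This is precisely where positivity of $\Pbb^{m|n}_\Cbb(1|\vec b)$ is needed: since $T^*_{V_0, -} = \oplus_j\Oc_{\Pbb^m_\Cbb}(-b_j)|_{V_0}$ with all $b_j > 0$, every line-bundle summand of $\wedge^{k'} T^*_{V_0, -}$ (for $k' > k$) is a summand of $\wedge^k T^*_{V_0, -}$ tensored with $\Oc_{\Pbb^m_\Cbb}(-e)|_{V_0}$ for some $e > 0$; and since $\mathcal Hom_{\Oc_{V_0}}(I_{V_0}/I_{V_0}^2, \wedge^k T^*_{V_0, -})$ is torsion-free on the projective variety $V_0$ and has no global sections, neither does any such negative twist (a nonzero section of the twist, times a form of the complementary degree not vanishing on $V_0$, would give a nonzero section of the untwisted sheaf). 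So the vanishing propagates upward; the construction of the previous paragraph then applies to $F_1$ in degree $k+2$, to $F_2$ in degree $k+4$, and so on, and since $\q_1\cdots\q_{n+1} = 0$ it terminates after finitely many steps with $f_N^\al = g^\al$ and $(f_N^\al) = (F)$. Hence $(F) = (G)$, so $\Cbb[x|\q]/(F)$ is the homogeneous coordinate ring of $S(V_0, T^*_{V_0, -})$ and $V(F)$ is split. (Alternatively one can route this through Theorem \ref{rf874gf79hf83j093j9f}: $\overline{\rho(F)} = 0$ annihilates the primary obstruction to splitting $V(F)$, and the propagation above annihilates every higher one; the elimination argument simply makes this concrete.)
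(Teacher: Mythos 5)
Your argument is correct and follows essentially the same route as the paper's proof: the hypothesis forces the normal obstruction section $\overline{\rho(F)}$ to vanish, the membership $h^\al\q^k \in I_{V_0}\cdot\Cbb[x|\q]$ then lets you rewrite the generators so that $(F)=(G)$, and splitness follows as in Lemma \ref{rfioeoiejieopsss} / \eqref{fbiuuivuhoijijipo3}. The one point where you go beyond the paper is the explicit iteration over the terms of $\q$-degree greater than $k$, justified by propagating the $H^0$-vanishing to higher exterior powers via positivity and torsion-freeness; the paper's proof writes $f = g + h\q^k$ and treats only the leading term, so your elimination argument supplies a step that the paper leaves implicit.
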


\begin{proof}
Let $(U_\mu)_{\mu = 1, \ldots, m+1}\ra \Pbb^{m|n}_\Cbb(1|\vec b)$ be the affine covering in Theorem \ref{rhhf8h98fh93893j3jf0} with coordinates $\big(z_{\{\mu\}}|\xi^{\{\mu\}}\big)$ on $U_\mu$. Suppose $V(f) = \big(f = 0\big)$ with $f = g + h\q^k$. We will argue, under the hypotheses of this proposition, that $V(f) = (g = 0)\cap (h = 0)$, which means $V(f)$ must be split (c.f., \eqref{fbiuuivuhoijijipo3}). Over each $U_\mu$ we have the affine variety $f\big(z_{\{\mu\}}|\xi^{\{\mu\}}\big) = 0$. We view $V(f) = \big(f(x|\q) = 0\big)$ as being glued together by these affine varieties $V(f)\cap U_\mu = \big(f\big(z_{\{\mu\}}|\xi^{\{\mu\}}\big) = 0 \big)$ in $\Pbb^{m|n}_\Cbb(1|\vec b)$. Indeed, we have: 
\begin{align}
\big(V(f)\cap U_\mu\big)|_{U_\mu\cap U_\mu} = \big(V(f)\cap U_\nu\big)|_{U_\mu\cap U_\mu}.
\label{rjfoif3jf90j390fj30}
\end{align}
Now to each $V(f)\cap U_\mu$ we have the homomorphism $\overline{\rho(f)}_\mu$. By \eqref{rjfoif3jf90j390fj30} we have $\overline{\rho(f)}_\mu|_{U_\mu\cap U_\mu} = \overline{\rho(f)}_\nu|_{U_\mu\cap U_\mu}$. This is precisely the condition that there exist a global section $\vp = \overline{\rho(f)}$ in $\Hom_{\Oc_{V_0}}(I_g/I_g^2, \wedge^kT^*_{V_0, -}) = H^0\big(V(f), \mathcal Hom_{\Oc_{V_0}}(I_g/I_g^2, \wedge^kT^*_{V_0, -})\big)$ satisfying \eqref{r784gf78hf983fj030}. But this violates our assumption that no such global section exists, thereby violating \eqref{rjfoif3jf90j390fj30} unless $\overline{\rho(f)}_\mu = 0$ for all $\mu$, in which case $f = 0$ iff $g = 0$ and $h = 0$. Now if $h =0$, then $V(f)$ will be homogeneously reduced. Hence by Lemma \ref{rfioeoiejieopsss}, $V(f)$ is split.
\end{proof}

\begin{COR}\label{rgf78gf873f9h30fj30}
Let $V(F)$ be a positive, projective, superspace variety and suppose $\overline{\rho(F)}$ is formal. Then $V(F)$ is split.\qed
\end{COR}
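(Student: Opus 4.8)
The plan is to derive Corollary \ref{rgf78gf873f9h30fj30} directly from Proposition \ref{rh89f98f0j309jf03}. Recall from the preceding discussion that $\overline{\rho(F)}$ is called \emph{formal} precisely when the sheaf $\mathcal Hom_{\Oc_{V_0}}\big(I_{V_0}/I_{V_0}^2, \wedge^k T^*_{V_0, -}\big)$ has no global sections. So the first step is simply to unwind this definition: formality of $\overline{\rho(F)}$ means $H^0\big(V_0, \mathcal Hom_{\Oc_{V_0}}(I_{V_0}/I_{V_0}^2, \wedge^k T^*_{V_0, -})\big) = (0)$.

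Next I would observe that this hypothesis is stronger than — and in particular implies — the hypothesis of Proposition \ref{rh89f98f0j309jf03}. That proposition assumes that there is no global section $\vp$ with $\vp|_{U_\mu} = \overline{\rho(F)}_\mu$ for all $\mu$; but if the whole $H^0$ group vanishes, then a fortiori no such particular global section exists. Hence the hypotheses of Proposition \ref{rh89f98f0j309jf03} are satisfied, and its conclusion gives that $V(F)$ is split. That is essentially the entire argument.

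Since this is a one-line deduction, the only thing to be careful about is matching up notation: the local homomorphisms $\overline{\rho(F)}_\mu$ constructed for each affine piece $V(F)\cap U_\mu$ are sections of $\mathcal Hom_{\Oc_{V_0}}(I_{V_0}/I_{V_0}^2, \wedge^k T^*_{V_0, -})$ over $U_\mu$, and a global section restricting to all of them would be an element of the $H^0$ in question; so vanishing of that $H^0$ is exactly what rules out the global section that Proposition \ref{rh89f98f0j309jf03} forbids. I do not anticipate any real obstacle here — the corollary is a formal restatement of the proposition under a cleaner hypothesis, and the substantive content (the gluing analysis showing $\overline{\rho(f)}_\mu = 0$ forces $V(f)$ to be a scheme-theoretic intersection, hence split via Lemma \ref{rfioeoiejieopsss}) has already been carried out in the proof of Proposition \ref{rh89f98f0j309jf03}.
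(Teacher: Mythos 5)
Your proposal is correct and matches the paper's intended argument: the corollary is stated with a \qed precisely because, once ``formal'' is unwound as the vanishing of $H^0\big(V_0, \mathcal Hom_{\Oc_{V_0}}(I_{V_0}/I_{V_0}^2, \wedge^k T^*_{V_0,-})\big)$, the hypothesis of Proposition \ref{rh89f98f0j309jf03} is immediate and its conclusion gives splitness. Your remark that the degenerate case $\overline{\rho(F)}_\mu = 0$ is already absorbed into the proposition's proof (via Lemma \ref{rfioeoiejieopsss}) is exactly the right point of care.
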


\begin{REM}
\emph{Proposition \ref{rh89f98f0j309jf03} and Corollary \ref{rgf78gf873f9h30fj30} clarify why the superspace extension of the rational normal curve of degree $d$ described in \cite{BETTEMB} is split for $d> 2$. When $d = 2$ it is an irreducible quadric and, as we shall see, these will generically be non-split.}
\end{REM}

\noindent
We will conclude with a general characterisation which we intend on applying in the section to follow. To present the characterisation, we introduce the notion of `homogeneous order' for varieties.

\begin{DEF}
\emph{Fix a homogeneous coordinate ring $\Cbb[x|\q]$ and let $F = (f^\al)$ be a finite collection of even, irreducible polynomials. We define the \emph{homogeneous order} of $V(F)$, denoted $\mathrm{ord}(V(F))$, to be the integer:
\[
\mathrm{ord}\left(V(F)\right) = \sup_{\{k\mid k\geq 2\}} \left\{\mbox{$V(F)$ is homogeneously reduced modulo $J^k$}\right\}
\]
where $J\subset \Cbb[x|\q]$ is the fermionic ideal.}
\end{DEF}

\begin{LEM}
Let $V(F)$ be a positive, projective, superspace variety. Then the following are equivalent:
\begin{enumerate}[(i)]
	\item the obstruction normal section of $V(F)$ satisfies, $\overline{\rho(F)}\neq0$;
	\item $V(F)$ is homogeneously non-reduced;
	\item $2\leq \mathrm{ord}\left(V(F)\right) < \mathrm{rank}~J/J^2$.
\end{enumerate}\qed
\end{LEM}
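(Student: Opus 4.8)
The plan is to establish the cyclic chain of implications (i) $\Rightarrow$ (ii) $\Rightarrow$ (iii) $\Rightarrow$ (i), working with a single polynomial $f = g + h^{|k}\q^k + \cdots$ for a hypersurface (the general case $F = (f^\al)$ being componentwise identical), where $k$ is the smallest positive integer with $\pt f/\pt \q_j \not\equiv 0$ for some $j$ modulo lower-order terms. First I would recall from the construction in Section \ref{rgf87gf87f9h30fj309j} that $\overline{\rho(F)}$ is obtained locally on each $U_\mu$ by lifting $g^\al \dashmapsto f^\al$ and projecting onto $\wedge^k_{\Cbb[x]}(J/J^2)$, so that $\overline{\rho(F)}_\mu$ records precisely the coefficients $h^{\al|k}$ of the lowest-degree odd term. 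This makes the equivalence of (i) and (ii) nearly tautological: $\overline{\rho(F)}\neq 0$ iff some $h^{\al|k}\not\equiv 0$ on $V_0$, iff some $\pt f^\al/\pt \q_j\neq 0$ in $\Cbb[x|\q]$, which is exactly Definition \ref{gf784g749h89jf0j9033f}. The one point requiring a sentence of care is that $\overline{\rho(F)}_\mu$ could a priori vanish after passing to $\Oc(V_0)$ even if $h^{\al|k}\not\equiv0$ in $\Cbb[x]$; but since $f^\al$ is irreducible with $g^\al = f^\al(x|0)$ irreducible, and $\Pbb^{m|n}_\Cbb(1|\vec b)$ is positive, one argues as in Theorem \ref{rhf74hf98hf98hf803jf0} that $h^{\al|k}$ cannot lie in the ideal $I_{V_0}$ for degree reasons (the odd generators carry strictly positive weight), so the reduction is faithful.

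Next, for (ii) $\Leftrightarrow$ (iii), the key observation is that $\mathrm{ord}(V(F))$ is exactly the largest $k'$ such that $f^\al \equiv g^\al \pmod{J^{k'}}$ up to homogeneous automorphisms fixing things mod $J^2$ — but by Theorem \ref{rhf74hf98hf98hf803jf0} (positivity), no such automorphism can eliminate any odd term, so $\mathrm{ord}(V(F))$ is simply the least degree $k$ in which an odd term of some $f^\al$ actually appears (minus, or relative to, the convention in the definition). Thus $V(F)$ is homogeneously non-reduced iff $\mathrm{ord}(V(F))$ is finite and at least $2$, i.e.\ iff $2\leq \mathrm{ord}(V(F))$; and the upper bound $\mathrm{ord}(V(F)) < \mathrm{rank}\ J/J^2 = n$ holds automatically because a nonzero even polynomial in $n$ odd variables with a nonvanishing $\q^k$-term has $k \leq n$, and if $V(F)$ is homogeneously non-reduced the reduction cannot persist all the way to $J^n$. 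Conversely, if $V(F)$ were homogeneously reduced, then by the remark after Definition \ref{gf784g749h89jf0j9033f} it would be split with all $h^{\al|k}$ removable, forcing $\mathrm{ord}(V(F)) = \infty$, contradicting (iii).

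The circle closes via (iii) $\Rightarrow$ (i): finiteness of $\mathrm{ord}(V(F))$ means some $f^\al$ genuinely contains an odd term in degree $k = \mathrm{ord}(V(F))$ that is not removable, hence $h^{\al|k}\neq 0$ in $\Oc(V_0)$ by the positivity/degree argument above, hence $\overline{\rho(F)}\neq 0$. The main obstacle I anticipate is pinning down the precise relationship between the supremum defining $\mathrm{ord}(V(F))$ and the lowest nonvanishing odd degree $k$ — one must be careful that "homogeneously reduced modulo $J^{k}$" refers to reducedness of the truncated variety and to use Theorem \ref{rhf74hf98hf98hf803jf0} to rule out that a clever automorphism could raise the apparent order; once positivity is invoked to kill that possibility, everything else is bookkeeping. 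I would keep the argument short by citing Theorem \ref{rhf74hf98hf98hf803jf0}, Lemma \ref{rfioeoiejieopsss}, and the explicit local description of $\overline{\rho(F)}$ rather than reproving them.
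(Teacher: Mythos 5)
The paper states this lemma with no proof --- it is presented as immediate from the definitions --- so the task is to unpack those definitions, and your proposal mostly does this correctly. Your central observation is the right one and is the only genuinely non-tautological point: $\overline{\rho(F)}\neq 0$ is a condition on the coefficients $h^{\al|I}$ \emph{modulo} $I_{V_0}$, whereas Definition \ref{gf784g749h89jf0j9033f} is a condition on them as honest polynomials, and positivity bridges the two because $\deg h^{\al|I} = \deg g^\al - \sum_{i\in I}b_i < \deg g^\al$ forces $h^{\al|I}\notin (g^\al)$ unless it vanishes. For hypersurfaces this closes (i) $\Leftrightarrow$ (ii). Your claim that the general case is ``componentwise identical'' is too quick, however: when $I_{V_0}$ has several generators of different degrees, $\deg h^{\al|I}<\deg g^\al$ does not prevent $h^{\al|I}$ from lying in the ideal generated by a lower-degree $g^\beta$, so the faithfulness of the reduction needs a separate argument there.

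The more serious gap is in (ii) $\Rightarrow$ (iii). Your assertion that $\mathrm{ord}(V(F)) < \mathrm{rank}\,J/J^2 = n$ ``holds automatically'' is not established by your argument, and the sentence ``the reduction cannot persist all the way to $J^n$'' has the logic reversed: if the lowest nonvanishing odd term of some $f^\al$ sits in exterior degree $k_0$, then $V(F)$ is homogeneously reduced modulo $J^k$ precisely for $k\leq k_0$, so $\mathrm{ord}(V(F))=k_0$, and nothing prevents $k_0=n$. Indeed the quadric $(x^1)^2+(x^2)^2+(x^3)^2+\q_1\q_2$ in $\Pbb^{2|2}_\Cbb$ --- the paper's own motivating example --- has $\mathrm{ord}=2=\mathrm{rank}\,J/J^2$, so the strict inequality in (iii) fails for it; a proof must either establish the bound or observe that the inequality should be $\leq$. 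A smaller point: in arguing (ii) $\Leftrightarrow$ (iii) you quantify over automorphisms and invoke Theorem \ref{rhf74hf98hf98hf803jf0}, but the homogeneous order is defined via ``homogeneously reduced'' (Definition \ref{gf784g749h89jf0j9033f}), which involves no automorphism at all --- you are importing the definition of ``homogeneously split'' where it is not needed, and the equivalence of (ii) with $\mathrm{ord}\geq 2$ is pure bookkeeping about where the lowest odd term appears.
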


\begin{THM}\label{rhf73f78hf98f0j09fj209jf2}
Let $V(F)$ be a positive, projective, superspace variety with reduced space $V_0$. Suppose $H^0\big(V_0, \mathcal Hom_{\Oc_{V_0}}\big(I_{V_0}/I_{V_0}^2, \wedge^kT^*_{V_0, -}\big)\big)\neq(0)$. Then if $\mathrm{ord}\left(V(F)\right) = k$, the $k$-th obstruction to splitting $V(F)$ non-vanishing. \qed
\end{THM}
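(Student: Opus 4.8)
The plan is to combine three ingredients already established in the paper: (a) the normality of smooth, positive, projective superspace varieties (Theorem \ref{rhf87f9h3f98h3fh03}), which guarantees that the boundary map $\dt$ in \eqref{rgf784gf79h398fh3} is injective in every even degree $k>1$; (b) the explicit construction of the normal obstruction section $\overline{\rho(F)}$ attached to $V(F)$ in Section \ref{rgf87gf87f9h30fj309j}, together with Theorem \ref{rf874gf79hf83j093j9f}, which tells us that $\dt(\overline{\rho(F)})$ is precisely an obstruction class to splitting $V(F)$; and (c) the hypothesis $\mathrm{ord}(V(F)) = k$, which pins down the degree in which the first nontrivial obstruction lives. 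The hypothesis $H^0\big(V_0, \mathcal Hom_{\Oc_{V_0}}(I_{V_0}/I_{V_0}^2, \wedge^kT^*_{V_0, -})\big)\neq(0)$ is what makes $\overline{\rho(F)}$ a genuine (possibly non-formal) global section rather than something that is automatically zero, so that the preceding lemma applies.

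First I would unwind $\mathrm{ord}(V(F)) = k$: by definition $V(F)$ is homogeneously reduced modulo $J^k$ but not modulo $J^{k+1}$, so after an automorphism of $\Cbb[x|\q]$ normalising the lower-order terms we may write the defining equations as $f^\al = g^\al + h^\al\q^k + (\text{higher order in } J)$ with the degree-$k$ part $h^\al\q^k$ not identically removable. This is exactly the situation in which the construction of Section \ref{rgf87gf87f9h30fj309j} produces the homomorphism $\overline{\rho(F)} : I_{V_0}/I_{V_0}^2 \ra \wedge^k T^*_{V_0,-}$, landing in $H^0\big(V_0, \mathcal Hom_{\Oc_{V_0}}(I_{V_0}/I_{V_0}^2, \wedge^kT^*_{V_0,-})\big) = H^0\big(V_0, N^{(k)}_{T^*_{V_0,-};\,T^*_{\Pbb^m_\Cbb,-}}\big)$. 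The equivalences (i)$\iff$(ii)$\iff$(iii) in the Lemma immediately preceding the theorem give $\overline{\rho(F)}\neq 0$, since $2\le k = \mathrm{ord}(V(F)) < \mathrm{rank}\,J/J^2$ (the strict upper bound coming from the fact that $V(F)$ is genuinely non-reduced at order $k$, i.e.\ $\partial f^\al/\partial\q_j\neq 0$ at that order).

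Next, by Theorem \ref{rf874gf79hf83j093j9f} applied to the smooth embedding $\iota : V(F)\subset S(\Pbb^m_\Cbb, \oplus_j\Oc_{\Pbb^m_\Cbb}(-b_j)) = \Pbb^{m|n}_\Cbb(1|\vec b)$ (smoothness of $V(F)$ being equivalent to smoothness of $V_0$, and the split structure of the ambient space coming from Theorem \ref{rhhf8h98fh93893j3jf0}), the image $\dt(\overline{\rho(F)})\in H^1\big(V_0, \Qcl^{(k)}_{T^*_{V_0,-}}\big)$ is an obstruction class to splitting $V(F)$. Now invoke Theorem \ref{rhf87f9h3f98h3fh03}: $V(F)$ is normal, hence the embedding of models is $k$-normal, i.e.\ $\dt$ is injective in degree $k$. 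Since $\overline{\rho(F)}\neq 0$ we conclude $\dt(\overline{\rho(F)})\neq 0$, so the $k$-th obstruction to splitting $V(F)$ is non-vanishing, which is the claim. I expect the only delicate point to be bookkeeping the identification $N^{(k)}_{T^*_{V_0,-};\,T^*_{\Pbb^m_\Cbb,-}} \cong \mathcal Hom_{\Oc_{V_0}}(I_{V_0}/I_{V_0}^2, \wedge^k T^*_{V_0,-})$ and checking that the section produced by the construction in Section \ref{rgf87gf87f9h30fj309j} is exactly the section $\phi(\iota)$ of Theorem \ref{rf874gf79hf83j093j9f} — that these two descriptions of "the section associated to the embedding" coincide is implicit in the cited results of \cite{BETTEMB}, and is where one must be careful; everything else is a short diagram chase through \eqref{rgf784gf79h398fh3}.
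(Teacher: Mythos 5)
Your proposal is correct and follows essentially the same route as the paper, which simply combines the non-vanishing of the normal obstruction section $\overline{\rho(F)}$ (via Proposition \ref{rh89f98f0j309jf03} and the preceding Lemma) with the injectivity of the boundary map $\dt$ supplied by normality (Theorem \ref{rhf87f9h3f98h3fh03}); your write-up merely unpacks the degree bookkeeping and the identification of $N^{(k)}$ with the $\mathcal Hom$ sheaf that the paper leaves implicit. The only caveat, inherited from the paper itself, is that Theorem \ref{rhf87f9h3f98h3fh03} requires smoothness of $V_0$, which you correctly flag but which the theorem statement omits.
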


\begin{proof}
This follows by combining  Proposition \ref{rh89f98f0j309jf03} with normality of positive, projective superspace varieties in Theorem \ref{rhf87f9h3f98h3fh03}.
\end{proof}

\section{Superspace Quadrics}
\label{r8f93hf983f8930}

\subsection{Quadrics and Non-Splitting}
A classical construct in algebraic geometry is the irreducible quadric. It is a projective variety of degree 2. In projective space $\Pbb^m_\Cbb$ any smooth (i.e., non-singular) quadric can be written in homogeneous coordinates in the form $(x^1)^2 + \cdots + (x^{m+1})^2 = 0$. In projective superspace $\Pbb^{m|n}_\Cbb$ we have the natural analogue of a quadric, being the locus of a degree two, homogeneous polynomial:
\begin{align}
Q(x|\q)
= 
\sum Q_{\mu\nu} x^\mu x^\nu
+
\sum_{ij} Q^{ij}\q_i\q_j
\label{rhf983hf9h389fj30fj3}
\end{align}
where $(Q_{\mu\nu})$ and $(Q^{ij})$ are symmetric resp., antisymmetric matrices over $\Cbb$. In general we propose the following definition of a superspace quadric in projective superspaces:

\begin{DEF}\label{rohf98h89fj039jf093}
\emph{Let $\Cbb[x|\q]$ be the homogeneous coordinate ring of a positive, projective superspace $\Pbb^{m|n}_\Cbb(1|\vec b)$ and let $J = \big(\q_1, \ldots, \q_n\big)\subset \Cbb[x|\q]$ be the fermionic ideal. A positive, projective superspace variety $Q\subset \Pbb^{m|n}_\Cbb(1|\vec b)$, with ideal sheaf $I_Q\subset \Cbb[x|\q]$, is a \emph{superspace quadric} if and only if there exists a unique homomorphism $I_Q \ra \Cbb[x|\q]/J^3$ commuting the following diagram,
\[
\xymatrix{
\ar@{.>}[dr]_{\exists !} I_Q \ar[rr] & & \Cbb[x|\q]
\\
& \Cbb[x|\q]/J^3 \ar[ur]& 
}
\]
where the map $\Cbb[x|\q]/J^3\ra \Cbb[x|\q]$ is a splitting of the exact sequence $J^3\ra\Cbb[x|\q]\ra \Cbb[x|\q]/J^3$.}
\end{DEF}

\noindent
Loci of polynomials of the form \eqref{rhf983hf9h389fj30fj3} are superspace quadrics in $\Pbb_\Cbb^{m|n}$ as per Definition \ref{rohf98h89fj039jf093}. Note however that, more generally, the reduced space of superspace quadrics $Q\subset \Pbb^{m|n}_\Cbb(1|\vec b)$ need not itself be a quadric in $\Pbb^m_\Cbb$.\footnote{e.g., this cannot be the case if, for all pairs of odd weighings $(b_i, b_j)$, we have $b_i + b_j\neq 2$} In what follows we will consider superspace quadric hypersurfaces.

\begin{THM}\label{rfh84hf98hf803jf03}
Let $Q$ be a homogeneously non-reduced, smooth superspace quadric hypersurface. Then $Q$ is non-split.
\end{THM}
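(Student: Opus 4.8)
The plan is to deduce Theorem \ref{rfh84hf98hf803jf03} from the machinery assembled in Sections \ref{rfb48fg78f78hf983f80f} and \ref{rgf87gf87f9h30fj309j}, specifically from Lemma \ref{uiefh98hf983fj309jf03} together with the normality of positive, projective superspace varieties in Theorem \ref{rhf87f9h3f98h3fh03}. Since $Q$ is a smooth, positive projective superspace variety, Theorem \ref{rhf87f9h3f98h3fh03} tells us the embedding $Q\subset \Pbb^{m|n}_\Cbb(1|\vec b)$ is normal, hence in particular $2$-normal: the boundary map $\dt: H^0\big(Q_0, N^{(2)}_{T^*_{Q_0, -}, T^*_{X, -}}\big)\ra H^1\big(Q_0, \Qcl^{(2)}_{T^*_{Q_0, -}}\big)$ is injective. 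So by Lemma \ref{uiefh98hf983fj309jf03} it suffices to show that the normal obstruction section $\phi = \overline{\rho(Q)}$ associated to $Q$ is non-zero \emph{and} lives in degree $2$, i.e.\ in $H^0\big(Q_0, N^{(2)}_{T^*_{Q_0, -}, T^*_{X, -}}\big)$.

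First I would establish that $\phi$ lies in degree exactly $2$. By Definition \ref{rohf98h89fj039jf093}, a superspace quadric hypersurface is characterised by the existence of a (unique) lift $I_Q\ra \Cbb[x|\q]/J^3$; writing the defining polynomial as $Q(x|\q) = g(x) + h(x)\q^2 + (\text{higher } \q\text{-order})$, the hypothesis of being a quadric forces the $\q$-expansion of the defining equation to terminate, modulo $J^3$, at the $\q^2$ term — there is no $\q^0\cdots\q^1$ part since the polynomial is even, and the quadric condition pins down the structure so that $\mathrm{ord}(V(Q)) = 2$ when $Q$ is homogeneously non-reduced. Under the identification $N^{(2)}_{T^*_{Q_0, -}, T^*_{X, -}}\cong \mathcal Hom_{\Oc_{Q_0}}\big(I_{Q_0}/I_{Q_0}^2, \wedge^2 T^*_{Q_0, -}\big)$ recalled in Section \ref{rgf87gf87f9h30fj309j}, the normal obstruction section $\overline{\rho(Q)}$ is precisely the $\Oc_{Q_0}$-linear map sending $g\dashmapsto h\q^2$, which is a degree-$2$ section by construction. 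Thus $\phi\in H^0\big(Q_0, N^{(2)}\big)$, matching the hypothesis of Lemma \ref{uiefh98hf983fj309jf03}.

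Next I would show $\phi\neq 0$. This is exactly where the hypothesis that $Q$ is homogeneously non-reduced enters: by Definition \ref{gf784g749h89jf0j9033f} there exists $k$ with $\pt Q/\pt \q_k\neq 0$, and since $Q$ is a quadric this means the $\q^2$-term $h(x)\q^2$ is non-trivial; equivalently $\mathrm{ord}(V(Q)) = 2$ rather than $\geq 3$. The construction of $\overline{\rho(Q)}$ in Section \ref{rgf87gf87f9h30fj309j} then yields a non-zero homomorphism $I_{Q_0}/I_{Q_0}^2\ra \wedge^2 T^*_{Q_0, -}$, since on each affine chart $U_\mu$ the local piece $\overline{\rho(Q)}_\mu$ sends $g$ to $h\q^2\neq 0$ and these glue to the global $\phi$ by \eqref{r784gf78hf983fj030}. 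Alternatively one can invoke the equivalence in the Lemma preceding Theorem \ref{rhf73f78hf98f0j09fj209jf2}, which gives directly that homogeneous non-reducedness is equivalent to $\overline{\rho(F)}\neq 0$. Either way, $\phi\neq 0$.

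Finally I would assemble the pieces: $\phi\neq 0$ lies in $H^0\big(Q_0, N^{(2)}_{T^*_{Q_0, -}, T^*_{X, -}}\big)$, and $2$-normality (a consequence of Theorem \ref{rhf87f9h3f98h3fh03}) ensures $\dt$ is injective in degree $2$, so $\dt(\phi)\neq 0$ is a genuine non-vanishing obstruction to splitting $Q$ in degree $2$; by the Supermanifold Non-Splitting Theorem — invoked through Lemma \ref{uiefh98hf983fj309jf03} — $Q$ is non-split. The step I expect to be the main obstacle is the careful verification that the normal obstruction section of a homogeneously non-reduced superspace quadric genuinely sits in degree $2$ and is non-zero — in other words, that Definition \ref{rohf98h89fj039jf093} really does force $\mathrm{ord}(V(Q)) = 2$ and that the quadric condition is compatible with the homogeneity constraints on the weights $\vec b$ (so that a non-trivial $h(x)\q^2$ term of the correct degree can exist at all); once that bookkeeping is settled, the non-splitting conclusion is a direct application of the earlier theorems.
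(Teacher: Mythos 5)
Your proposal is correct and follows essentially the same route as the paper: both reduce the statement to the non-vanishing of the degree-$2$ normal obstruction section of $Q$, combined with $2$-normality of smooth, positive, projective superspace varieties (Theorem \ref{rhf87f9h3f98h3fh03}) and the Supermanifold Non-Splitting Theorem. The only real difference is in how the non-vanishing is certified: the paper verifies the hypothesis of Theorem \ref{rhf73f78hf98f0j09fj209jf2} by explicitly computing $H^0\big(Q_0, \mathcal Hom_{\Oc_{Q_0}}(\Ic_{Q_0}/\Ic_{Q_0}^2, \wedge^2 T^*_{\Pbb^m_\Cbb,-}|_{Q_0})\big)\neq (0)$ using the identification $\Ic_{Q_0}\cong \Oc_{\Pbb^m_\Cbb}(-d)$ with $d = b_i+b_j$ constant, whereas you exhibit the non-zero section directly from the local pieces $g\dashmapsto h\q^2$; these amount to the same verification.
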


\begin{proof}
Let $Q\subset \Pbb^{m|n}_\Cbb(1|\vec b)$ be a smooth superspace quadric. Then $\vec b = (b_j)$ is a tuple of positive integers. By homogeneity, $b_i + b_j$ must be constant for all $i, j$. Say, $b_i + b_j = d$. Let $Q_0\subset \Pbb^m_\Cbb$ be the reduced space of $Q$. By smoothness and homogeneity, $Q_0$ will be a smooth projective variety of degree $d$. If $\Ic_{Q_0}$ is the ideal sheaf defining $Q_0\subset \Pbb^m_\Cbb$, we identify $\Ic_{Q_0} = \Oc_{\Pbb_\Cbb^m}(-d)$. Observe now that, by smoothness, the global sections of obstruction normal sheaf can be calculated on $\Pbb^m_\Cbb$. Hence, we have:
\begin{align}
H^0\big(Q_0,\mathcal Hom_{\Oc_{Q_0}}\big( \Ic_{Q_0}/\Ic_{Q_0}^2,&\wedge^2 T^*_{\Pbb_\Cbb^m, -}|_{Q_0}\big)\big)
\notag
\\
&\cong
\notag
\\
\oplus_j H^0\big(\Pbb^m_\Cbb,& \mathcal Hom_{\Oc_{\Pbb^m_\Cbb}}\big(\Oc_{\Pbb^m_\Cbb}(-d), \Oc_{\Pbb^m_\Cbb}(-d)\big)\big)
\notag
\\
&
\cong 
\Cbb^n.
\label{rhf784gf873hf983h}
\end{align}
In particular, the obstruction normal sheaf admits non-vanishing, global sections. Hence, we can apply Theorem \ref{rhf73f78hf98f0j09fj209jf2}. The present theorem now follows from the Supermanifold Non-Splitting Theorem and Theorem \ref{rhf73f78hf98f0j09fj209jf2}.
\end{proof}

\noindent
As an immediate application we can deduce that the following class of hypersurfaces, appearing in \cite{SETHI} as potential mirrors to Landau-Ginzberg orbifolds, are non-split.

\begin{EX}\label{rfh84fg78hf893hf09j903}
Let $\Cbb[x|\q]$ be the homogeneous coordinate ring for the projective superspace $\Pbb_\Cbb^{3N|2N-2}(1|1,2, \ldots, 1, 2)$, where there are $(N-1)$-many pairs $(1, 2)$. Then the hypersurface defined by $\sum_i x_i^3 + \sum_{k\geq0}\q_{2k+1}\q_{2k+2} = 0$ is a non-singular, superspace quadric. As it is homogeneously non-reduced, it is non-split as a superspace by Theorem $\ref{rfh84hf98hf803jf03}$.
\end{EX}

\subsection{Quadrics in Products}
The category of superspaces admit products. That is, given superspaces $\Xfr = (X, \Oc_\Xfr)$ and $\Xfr^\p = (X^\p, \Oc_{\Xfr^\p})$ we can form the product $\Xfr\times\Xfr^\p$ as the locally ringed space $(X\times X^\p, \Oc_\Xfr\boxtimes\Oc_{\Xfr^\p})$, where $\Oc_\Xfr\boxtimes\Oc_{\Xfr^\p} = p_X^*\Oc_\Xfr\otimes p_{\Xfr^\p}^*\Oc_{\Xfr^\p}$ for projections $p_X : X\times X^\p\ra X$ resp. $p_{X^\p}: X\times X^\p\ra X^\p$. Note that $\Xfr\times\Xfr^\p$ will be a superspace since, locally, we have $\Oc_\Xfr\boxtimes\Oc_{\Xfr^\p}\cong_{\mathrm{loc.}} p_X^*\wedge^\bt T^*_{X, -}\otimes p_{X^\p}^*\wedge^\bt T^*_{X^\p, -} \cong \wedge^\bt p_X^*T^*_{X, -}\otimes \wedge^\bt p_{X^\p}^*T^*_{X^\p, -} \cong \wedge^\bt\big(p_X^*T^*_{X, -}\oplus p_{X^\p}^*T^*_{X^\p, -}\big)$, i.e., that $\Oc_\Xfr\boxtimes\Oc_{\Xfr^\p}$ is locally isomorphic to a sheaf of exterior algebras. If $\Xfr$ and $\Xfr^\p$ are $(m|n)$- resp., $(m^\p|n^\p)$-dimensional, then the product $\Xfr\times\Xfr^\p$ will be $(m+m^\p|n+n^\p)$-dimensional. Clearly, $(\Xfr\times\Xfr^\p)_\red=X\times X^\p$ and the odd cotangent sheaf is $T^*_{X\times X^\p, -} = T^*_{X, -}\boxplus T^*_{X^\p, -} = p_X^*T^*_{X, -}\oplus p_{X^\p}^*T^*_{X^\p, -}$.

\begin{EX}\label{rj89h7fg397fh3o}
\[
\Pbb^{m|n}_\Cbb(1|\vec b) \times \Pbb^{m^\p|n^\p}_\Cbb(1|\vec b^\p) = S\big(\Pbb^m_\Cbb\times\Pbb^{m^\p}_\Cbb, \Oc_{\Pbb_\Cbb^m}(-\vec b) \boxplus\Oc_{\Pbb_\Cbb^{m^\p}}(-\vec b^\p)\big).
\] 
\end{EX}
~\\

\noindent
Let $V$ and $W$ be $m$- and $m^\p$-dimensional, complex vector spaces. The projectivisation of the tensor product map $V\times W\ra V\otimes W$ is referred to as the Segre embedding. Phrased alternately, it is an embedding of projective spaces $\Pbb_\Cbb^m\times \Pbb^{m^\p}_\Cbb \subset  \Pbb^{(m+1)(m^\p+1) - 1}_\Cbb$. This generalises to projective superspaces in the following way.

\begin{THM}\label{rfh784gf7f9838fj309}
There exists a smooth embedding of positive, projective superspaces, 
\[
\Pbb^{m|n}_\Cbb(1|\vec b)\times \Pbb^{m^\p|n^\p}_\Cbb(1|\vec b^\p) \hookrightarrow \Pbb^{m^{\p\p}|n^{\p\p}}_\Cbb(1|\vec b^{\p\p}),
\]
for some $m^{\p\p}, n^{\p\p}$ and $\vec b^{\p\p}$ positive. Explicitly, see \eqref{rfh894f98hf89jf03},  \eqref{virhviurvioejopope} and \eqref{jfkrjvjkrnnveeee} respectively.
\end{THM}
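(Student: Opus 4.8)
The plan is to construct the superspace Segre embedding explicitly, mimicking the classical construction at the level of homogeneous coordinate rings, and then to verify that the map so defined is a well-defined morphism of superspaces that is moreover a closed embedding and smooth. First I would set up notation: write $\Cbb[x|\q]$ and $\Cbb[y|\eta]$ for the homogeneous coordinate rings of $\Pbb^{m|n}_\Cbb(1|\vec b)$ and $\Pbb^{m^\p|n^\p}_\Cbb(1|\vec b^\p)$ respectively, where $x = (x^0, \ldots, x^m)$ have weight $1$, $\q = (\q_1, \ldots, \q_n)$ have weights $\vec b = (b_1, \ldots, b_n)$, and similarly for $y, \eta$. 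The target superspace $\Pbb^{m^{\p\p}|n^{\p\p}}_\Cbb(1|\vec b^{\p\p})$ will have even coordinates indexed by pairs $(\mu, \nu)$ with $0 \le \mu \le m$, $0 \le \nu \le m^\p$ (so $m^{\p\p} + 1 = (m+1)(m^\p+1)$ as in the classical case), and odd coordinates of two types: one family indexed by $(a, \nu)$ coming from $\q_a \cdot y^\nu$, the other indexed by $(\mu, c)$ coming from $x^\mu \cdot \eta_c$; thus $n^{\p\p} = n(m^\p+1) + (m+1)n^\p$. The map on coordinate rings is the evident one, $z_{\mu\nu} \mapsto x^\mu y^\nu$, $\xi_{a\nu} \mapsto \q_a y^\nu$, $\xi_{\mu c} \mapsto x^\mu \eta_c$; the referenced equations \eqref{rfh894f98hf89jf03}, \eqref{virhviurvioejopope}, \eqref{jfkrjvjkrnnveeee} should be exactly these formulas together with the specification of $m^{\p\p}, n^{\p\p}$ and $\vec b^{\p\p}$, where $\vec b^{\p\p}$ assigns weight $b_a$ to $\xi_{a\nu}$ and weight $b^\p_c$ to $\xi_{\mu c}$ — note these are automatically positive since $\vec b$ and $\vec b^\p$ are.

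Next I would check that this assignment descends to a morphism of the quotient superspaces, i.e., that it is compatible with the respective $\Cbb^\times$-actions used in Construction \ref{rf894hf894jf80jf09jf0}. Under the two-torus acting by $(\lambda, \mu)$, the product $\Pbb^{m|n}_\Cbb(1|\vec b) \times \Pbb^{m^\p|n^\p}_\Cbb(1|\vec b^\p)$ carries a $\Cbb^\times \times \Cbb^\times$ action; the Segre monomials $x^\mu y^\nu$, $\q_a y^\nu$, $x^\mu \eta_c$ are each acted on by a single character of $\Cbb^\times \times \Cbb^\times$, and one checks that restricting the target's $\Cbb^\times$ to the anti-diagonal (or rather: the map is equivariant for the obvious homomorphism of tori) makes the assignment well-defined on the quotients. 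Concretely, it is cleanest to work chart-by-chart: on the standard affine chart $U_{\mu} \times U_{\mu'}^\p$ of the product, with affine coordinates $(z^\nu_{\{\mu\}} | \xi^{\{\mu\}}_a)$ and $(w^{\nu'}_{\{\mu'\}} | \zeta^{\{\mu'\}}_c)$, write down the image in the corresponding standard chart of the target and verify that it is a polynomial map whose Jacobian has maximal rank; the local equations cutting out the image are the super-analogues of the classical Segre relations (bilinear in the even coordinates, plus the mixed relations forcing $\xi$-type coordinates to be the appropriate products), and one checks these generate a prime ideal with the correct quotient, so the image is a subvariety and the map onto it is an isomorphism.

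For smoothness, by Definition \ref{rhf874g87h398f30j03} and the discussion following it, it suffices that the underlying map of reduced spaces $\Pbb^m_\Cbb \times \Pbb^{m^\p}_\Cbb \hookrightarrow \Pbb^{m^{\p\p}}_\Cbb$ is a smooth (closed) embedding — which is the classical Segre embedding, hence smooth — together with the observation that, by Theorem \ref{rhhf8h98fh93893j3jf0} and Example \ref{rj89h7fg397fh3o}, all superspaces in sight are split, so the embedding of superspaces is determined by the embedding of reduced spaces plus the induced surjection on odd cotangent sheaves. Thus the final bookkeeping step is to identify this surjection: one checks that the pullback of $\Oc_{\Pbb^{m^{\p\p}}_\Cbb}(-\vec b^{\p\p})$ under the Segre map surjects onto $\Oc_{\Pbb^m_\Cbb}(-\vec b) \boxplus \Oc_{\Pbb^{m^\p}_\Cbb}(-\vec b^\p)$, which follows from the standard fact that Segre pulls back $\Oc(-1)$ to $\Oc(-1,-1)$ together with the explicit form of the weights above — the $\xi_{a\nu}$-coordinates restrict to sections of $\Oc(-b_a) \boxtimes \Oc(0)$ and the $\xi_{\mu c}$-coordinates to sections of $\Oc(0) \boxtimes \Oc(-b^\p_c)$. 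I expect the main obstacle to be purely organizational rather than conceptual: namely, pinning down the combinatorics of the index sets and weight vectors so that the homogeneity is exactly right, and verifying that the mixed Segre-type relations among the odd coordinates (e.g. $z_{\mu\nu}\xi_{a\nu'} = z_{\mu\nu'}\xi_{a\nu}$ and cross-relations between the two families of odd coordinates) do cut out precisely the image as a \emph{reduced}, prime subvariety in the appropriate chart — this is where the non-commutativity of the odd variables requires a little care, though since all the relevant monomials are of low odd-degree it reduces to a finite check. The actual verification that the map is an embedding (injectivity and immersion) then proceeds exactly as in \cite{LEBRUN} for the classical case, and the proof is deferred to Appendix \ref{lfkfkjfhgrye}.
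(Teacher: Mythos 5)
Your overall strategy (Segre on reduced spaces, plus an induced surjection of odd cotangent sheaves, using that everything in sight is split) is the same as the paper's, which reduces the theorem to producing an embedding of models and invokes a lemma of Lebrun--Poon type to upgrade that to an embedding of split supermanifolds. However, your explicit coordinate formulas contain a genuine error that invalidates the construction for general positive weights. You send $\xi_{a\nu}\mapsto \q_a y^\nu$ and assign it weight $b_a$ in the target. Check equivariance: the even Segre monomials $x^\mu y^\nu$ scale by $\lam\mu$ under the torus $(\lam,\mu)\in\Cbb^\times\times\Cbb^\times$, so the map must intertwine the source $2$-torus with the target $\Cbb^\times$ via $(\lam,\mu)\mapsto\lam\mu$; an odd target coordinate of weight $w$ must then scale by $(\lam\mu)^w$. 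But $\q_a y^\nu$ scales by $\lam^{b_a}\mu^{1}$, which is of the form $(\lam\mu)^w$ only if $b_a=1$. So your assignment is not homogeneous for the weighted action unless $\vec b=\vec b^\p=\vec 1$, i.e., you have only reproduced the Lebrun--Poon case. Correspondingly, your count $n^{\p\p}=n(m^\p+1)+(m+1)n^\p$ disagrees with the paper's formula \eqref{virhviurvioejopope}.

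The fix, which is what the paper's argument encodes sheaf-theoretically, is to pair $\q_a$ not with the linear coordinates $y^\nu$ but with all degree-$b_a$ monomials in $y$: the monomial $\q_a\, y^{\alpha}$ with $|\alpha|=b_a$ scales by $\lam^{b_a}\mu^{b_a}=(\lam\mu)^{b_a}$, hence is a legitimate weight-$b_a$ odd coordinate on the target. In sheaf language, $p^*\Oc_{\Pbb^m_\Cbb}(-b_a)\cong j^*\Oc_{\Pbb^{m^{\p\p}}_\Cbb}(-b_a)\otimes\big(\Oc\boxtimes\Oc_{\Pbb^{m^\p}_\Cbb}(b_a)\big)$, and global generation of $\Oc_{\Pbb^{m^\p}_\Cbb}(b_a)$ by its $\binom{m^\p+b_a}{b_a}$ sections yields the needed surjection $\oplus^{h_{\Pbb^{m^\p}_\Cbb}(b_a)}j^*\Oc_{\Pbb^{m^{\p\p}}_\Cbb}(-b_a)\twoheadrightarrow p^*\Oc_{\Pbb^m_\Cbb}(-b_a)$; this is why each $\q_a$ contributes $\binom{m^\p+b_a}{b_a}$ odd coordinates to the target rather than $m^\p+1$. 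Your further programme of exhibiting the image as cut out by super-Segre relations and checking primality is unnecessary for the statement being proved (the model-embedding lemma does that work), but it is not where the problem lies; the problem is the weight bookkeeping.
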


\begin{proof}
The proof involves some aspects of supermanifold theory that have not yet been introduced so we defer it to Appendix \ref{lfkfkjfhgrye}.
\end{proof}

\noindent
A corollary of the embedding in Theorem \ref{rfh784gf7f9838fj309} is the analogue of Theorem \ref{rfh84hf98hf803jf03} for quadrics in products of projective superspaces.

\begin{COR}\label{djofheofjoieoe}
Let $Q$ be a homogeneously non-reduced, quadric hypersurface in a product of positive, projective superspaces. Then $Q$ is non-split.
\end{COR}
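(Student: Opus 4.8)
The plan is to reduce Corollary \ref{djofheofjoieoe} to the already-established hypersurface case Theorem \ref{rfh84hf98hf803jf03} by transporting a quadric $Q$ in a product of positive, projective superspaces over to a quadric in a single positive, projective superspace via the Segre-type embedding of Theorem \ref{rfh784gf7f9838fj309}. The key observation is that the notions involved — being homogeneously non-reduced, and being a superspace quadric in the sense of Definition \ref{rohf98h89fj039jf093} — are intrinsic to the variety together with its normal obstruction data, and that a smooth embedding of ambient superspaces restricts to a smooth embedding of $Q$ which preserves exactly this data. So the first step is to record that, by Example \ref{rj89h7fg397fh3o} and Example \ref{rfh784gf7f9838fj309}'s explicit formulas, the embedding $\iota:\Pbb^{m|n}_\Cbb(1|\vec b)\times \Pbb^{m^\p|n^\p}_\Cbb(1|\vec b^\p)\hookrightarrow \Pbb^{m^{\p\p}|n^{\p\p}}_\Cbb(1|\vec b^{\p\p})$ is an \emph{even} smooth embedding of positive, projective superspaces: in particular it identifies the odd cotangent sheaf of the product with the restriction of $T^*_{\Pbb^{m^{\p\p}|n^{\p\p}}_\Cbb(1|\vec b^{\p\p}), -}$, and on homogeneous coordinate rings it is induced by a surjection of graded rings sending the $\Pbb^{m^{\p\p}}_\Cbb$-homogeneous coordinates to the bilinear Segre monomials.

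Next I would show that if $Q$ is a homogeneously non-reduced, smooth quadric hypersurface in the product, then its image $\iota(Q)$ is a homogeneously non-reduced, smooth superspace quadric hypersurface inside $\Pbb^{m^{\p\p}|n^{\p\p}}_\Cbb(1|\vec b^{\p\p})$. Smoothness is immediate since $\iota$ is a closed embedding of smooth superspaces and $Q$ is smooth. For the quadric condition: a quadric hypersurface in a product is cut out by a single even homogeneous polynomial of bidegree $(1,1)$ in the $x$-variables plus quadratic terms in the odd variables (up to the weighting conventions forced by homogeneity), and pulling back Serre's sheaf $\Oc(1)$ on $\Pbb^{m^{\p\p}}_\Cbb$ along the Segre map yields the bilinear sheaf $\Oc_{\Pbb^m_\Cbb}(1)\boxtimes\Oc_{\Pbb^{m^\p}_\Cbb}(1)$, so a bidegree-$(1,1)$ defining equation for $Q$ becomes a genuine degree-$2$ equation for $\iota(Q)$; the uniqueness-of-lift condition in Definition \ref{rohf98h89fj039jf093} is preserved because $\iota$ identifies $\Cbb[x|\q]/J^3$ for the two ambient rings compatibly (the $J^3$-truncations match under the surjection of coordinate rings). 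For homogeneous non-reducedness, the hypothesis gives some $\pt f/\pt\q_k\neq0$ for the defining polynomial $f$ of $Q$; since $\iota$ is even it identifies the odd variables of the product with (a subset of) those of the larger superspace, and the normal obstruction section $\overline{\rho(f)}$ of $Q$ is carried to that of $\iota(Q)$ under the identification $N^{(2)}_{T^*_{Q,-};T^*_{\Pbb^{m^{\p\p}|n^{\p\p}}_\Cbb,-}}\cong N^{(2)}_{T^*_{Q,-};T^*_{\Pbb^m_\Cbb\times\Pbb^{m^\p}_\Cbb,-}}$, so $\overline{\rho(f)}\neq0$ transports to $\overline{\rho(\iota_*f)}\neq0$, which by the equivalences preceding Theorem \ref{rhf73f78hf98f0j09fj209jf2} is exactly homogeneous non-reducedness of $\iota(Q)$.

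With these two facts in hand, applying Theorem \ref{rfh84hf98hf803jf03} to $\iota(Q)$ shows $\iota(Q)$ is non-split. Finally, since $\iota$ is an isomorphism onto its image, $\iota(Q)\cong Q$ as superspaces, and non-splitness is an isomorphism invariant, so $Q$ is non-split, which is the claim.

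The main obstacle I anticipate is the bookkeeping around the weightings: one must check that the Segre-type embedding of Theorem \ref{rfh784gf7f9838fj309} really does send a bidegree-$(1,1)$ (in the appropriate weighted sense) hypersurface to an honest superspace quadric — i.e., that the odd-weighting tuple $\vec b^{\p\p}$ produced by that theorem is compatible with the sum-of-odd-weights-equals-$d$ constraint forced by homogeneity of a quadric — and that homogeneous non-reducedness is genuinely stable under the embedding rather than merely the weaker statement that the abstract superspace is unchanged. Most of this is a matter of unwinding the explicit formulas \eqref{rfh894f98hf89jf03}, \eqref{virhviurvioejopope}, \eqref{jfkrjvjkrnnveeee} cited in Theorem \ref{rfh784gf7f9838fj309}, together with the functoriality of the normal obstruction section construction of Section \ref{rgf87gf87f9h30fj309j} under even smooth embeddings; the conceptual content is light, but care is needed so that the reduction to Theorem \ref{rfh84hf98hf803jf03} is clean.
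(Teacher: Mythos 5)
Your proposal takes essentially the same route as the paper: transport $Q$ via the Segre-type embedding of Theorem \ref{rfh784gf7f9838fj309} into a single positive, projective superspace, observe that it remains a homogeneously non-reduced quadric hypersurface there and that positivity is preserved, and then apply Theorem \ref{rfh84hf98hf803jf03}. The paper's proof is a terse version of yours; the additional care you take with the weightings and the transport of the normal obstruction section is detail the paper leaves implicit.
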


\begin{proof}
If $Q\subset \Pbb^{m|n}_\Cbb(1|\vec b)\times \Pbb^{m|n}_\Cbb(1|\vec b^\p)$ is homogeneously non-reduced then, via the embedding in Theorem \ref{rfh784gf7f9838fj309}, it will be a homogeneously non-reduced, quadric hypersurface in $\Pbb^{m^{\p\p}|n^{\p\p}}_\Cbb(1|\vec b^{\p\p})$. If $\Pbb^{m|n}_\Cbb(1|\vec b)\times \Pbb^{m|n}_\Cbb(1|\vec b^\p)$ is a product of positive, projective superspaces, then so is $\Pbb^{m^{\p\p}|n^{\p\p}}_\Cbb(1|\vec b^{\p\p})$. The proof now follows from Theorem \ref{rfh84hf98hf803jf03}.
\end{proof}

\noindent
With Corollary \ref{djofheofjoieoe} above we can deduce non-splitness of the mirror superspace quadric obtained by Aganagic and Vafa in \cite{AGAVAFA}.

\begin{EX}\label{rf784gf87hf83j03j}
If $(x|\q)$ and $(y|\eta)$ are homogeneous coordinates for $\Pbb^{m|n}_\Cbb\times \Pbb^{m|n}_\Cbb$, the quadric defined by the locus $\sum_\mu x^\mu y^\mu + \sum_j \q_j\eta_j = 0$ will be non-split.
\end{EX}

\appendix
\numberwithin{equation}{section}

\section{Automorphisms of Superspaces}
\label{g6df36d873dh983hd09j39}

\subsection*{Preliminaries}
We present a brief study here of the automorphisms of projective superspace. We begin with some preliminary theory, starting with the following definition.

\begin{DEF}
\emph{An automorphism of a superspace $\Xfr$ which fixes the modelling data $(X, T^*_{X, -})$ is referred to as a \emph{framed automorphism}. The group of framed automorphisms of $\Xfr$ is denoted $\Aut_{\mathrm{fr}}\Xfr$.}
\end{DEF}

\noindent
When $\Xfr = S(X, T^*_{X, -})$ is the split model, $\Oc_\Xfr\cong \wedge^\bt T^*_{X, -}$. In this case we have a short exact sequence of sheaves of groups:
\begin{align}
\{1\} \lra \Gc_{T^*_{X, -}} \lra \mathcal Aut_{\Zbb_2}\wedge^\bt T^*_{X, -} \lra \mathcal Aut_{\Oc_X}T^*_{X, -}\lra \{1\}
\label{rnfui4hf3h98fh389}
\end{align}
where $\mathcal Aut_{\Zbb_2}\wedge^\bt T^*_{X, -}$ are the automorphisms of $\wedge^\bt T^*_{X, -}$ as a sheaf of supercommutative algebra which preserves the global, $\Zbb_2$-grading. Denote by $\mathrm{Aut}_0S(X, T^*_{X, -})$ the automorphisms of $S(X, T^*_{X,-})$ which act trivially on the reduced space $S(X, T^*_{X, -})_\red = X$. In terms of sheaves then, $\mathrm{Aut}_0S(X, T^*_{X, -})$ are the global sections of $\mathcal Aut_{\Zbb_2}\wedge^\bt T^*_{X, -}$, i.e, $\Aut_0S(X, T^*_{X, -}) = H^0(X, \mathcal Aut_{\Zbb_2}\wedge^\bt T^*_{X, -})$. The framed automorphisms are then:
\begin{align}
\Aut_{\mathrm{fr}} S(X, T^*_{X, -}) = H^0\big(X, \Gc_{T^*_{X,-}}).
\label{rh784g78fh9fh39}
\end{align}
From the long exact sequence on cohomology induced by \eqref{rnfui4hf3h98fh389} we see that as groups: 
\begin{enumerate}[(i)]
	\item $\Aut_{\mathrm{fr}} S(X, T^*_{X, -})$ is a subgroup of $\Aut_0S(X, T^*_{X, -})$ and;
	\item there exists a natural homomorphism 
	\begin{align}
	\eta: \Aut~ S(X, T^*_{X, -}) \ra \mathrm{GL}(T^*_{X, -}),
	\label{fhfhfhfhfhfhfhsss}
	\end{align}
	where $\mathrm{GL}(T^*_{X, -}) = H^0(X, \mathcal Aut_{\Oc_X}T^*_{X, -})$.
\end{enumerate}
In what follows we will consider projective superspaces.

\subsection*{Automorphisms of Projective Superspace}
If $\Cbb[x|\q]$ denotes the homogeneous coordinate ring of $\Pbb^{m|n}_\Cbb(\vec a|\vec b)$, then automorphisms of $\Pbb^{m|n}_\Cbb(\vec a|\vec b)$ are induced by automorphisms $\vp: \Cbb[x|\q]\ra \Cbb[x|\q]$ which preserve the $\Zbb_2$-grading and the weights of the variables $x$ and $\q$, i.e., that
\begin{align}
\mathrm{wt.}(x^\mu) = \mathrm{wt.}(\vp(x^\mu))&&\mbox{and}&&\mathrm{wt.}(\q_j) = \mathrm{wt.}(\vp(\q_j)).
\label{rhf9hf93hf98j30}
\end{align}
Our objective is to prove:

\begin{THM}\label{fg674gf6gf73h93}
Let $\Pbb^{m|n}_\Cbb(1|\vec b)$ be a positive, projective superspace. The automorphisms $\mathrm{Aut}_0\Pbb^{m|n}_\Cbb(1|\vec b)$ is a subgroup of $\mathrm{GL}_n(\Cbb)$.
\end{THM}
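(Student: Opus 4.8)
The plan is to analyze an automorphism $\vp \in \Aut_0\Pbb^{m|n}_\Cbb(1|\vec b)$ through its action on the homogeneous coordinate ring $\Cbb[x|\q]$, exploiting the weight constraints \eqref{rhf9hf93hf98j30} together with positivity of $\vec b$. Since $\vp$ acts trivially on the reduced space $\Pbb^m_\Cbb$, the induced automorphism of $\Cbb[x]$ is the identity up to scaling, so modulo the fermionic ideal $J$ we may normalize $\vp(x^\mu) = x^\mu$. The general form of $\vp$ is then
\begin{align*}
x^\mu &\longmapsto x^\mu + \vp^{\mu|2}(x)\q^2 + \vp^{\mu|4}(x)\q^4 + \cdots,
\\
\q_j &\longmapsto \sum_k A_{jk}(x)\q_k + \vp_{j|3}(x)\q^3 + \vp_{j|5}(x)\q^5 + \cdots,
\end{align*}
where the coefficients are homogeneous polynomials in $\Cbb[x]$ forced by weight-preservation. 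The claim is that all higher-order terms vanish and that each $A_{jk}$ is a constant, so that $\vp$ reduces to the linear action $\q_j \mapsto \sum_k A_{jk}\q_k$ with $(A_{jk}) \in \GL_n(\Cbb)$; this gives the embedding $\Aut_0\Pbb^{m|n}_\Cbb(1|\vec b) \hookrightarrow \GL_n(\Cbb)$.

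The key step is a degree count entirely analogous to the one in the proof of Theorem \ref{rhf74hf98hf98hf803jf0}. For the coefficient $\vp^{\mu|2}(x)\q_i\q_j$ appearing in $\vp(x^\mu)$, weight-preservation forces $\deg \vp^{\mu|2} + b_i + b_j = \deg x^\mu = 1$; since $b_i, b_j > 0$ and $\deg \vp^{\mu|2} \geq 0$, the left side is at least $2 > 1$, a contradiction, so $\vp^{\mu|2} = 0$, and likewise all higher terms in $\vp(x^\mu)$ vanish. For the linear coefficient $A_{jk}(x)$ in $\vp(\q_j)$, weight-preservation gives $\deg A_{jk} + b_k = b_j$; this can be nonzero only when $b_j \geq b_k$, and in that case $A_{jk}$ is homogeneous of degree $b_j - b_k$. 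For the cubic term $\vp_{j|3}(x)\q_i\q_k\q_\ell$, we need $\deg \vp_{j|3} + b_i + b_k + b_\ell = b_j$; but positivity of $\vec b$ and the bound $\deg\vp_{j|3}\geq 0$ force $b_j \geq b_i + b_k + b_\ell$, which, combined again with positivity, is only possible in degenerate situations — and more to the point, I would argue that iterating the ring-automorphism condition (invertibility of $\vp$, i.e. the existence of a two-sided inverse) kills these terms. The cleanest route: first establish $\vp(x^\mu) = x^\mu$ as above, then observe that $\vp$ restricted to the subring generated by the $\q$'s must be an algebra automorphism of $\wedge^\bullet(\bigoplus_j \Oc(-b_j))$ fixing the base, hence lands in $H^0(\Pbb^m_\Cbb, \mathcal Aut_{\Zbb_2}\wedge^\bullet T^*_{\Pbb^m_\Cbb,-})$; the map $\eta$ of \eqref{fhfhfhfhfhfhfhsss} sends this to $H^0(\Pbb^m_\Cbb, \mathcal Aut_{\Oc}T^*_{\Pbb^m_\Cbb,-}) = \GL(T^*_{\Pbb^m_\Cbb, -})$, and I would show $\eta$ is injective here because its kernel is $H^0(\Pbb^m_\Cbb, \Gc_{T^*_{\Pbb^m_\Cbb,-}})$, whose sections in positive fermionic degree are cut out by global sections of sheaves of the form $T_{\Pbb^m_\Cbb}(-2k)$ or $\Oc(-2k)\otimes(\text{twists})$ with strictly negative twist — hence zero by Bott's formula, exactly as in the proof of Theorem \ref{rhf87f9h3f98h3fh03}.

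I expect the main obstacle to be bookkeeping the structure of $\GL(T^*_{\Pbb^m_\Cbb, -}) = H^0(\Pbb^m_\Cbb, \mathcal Aut_{\Oc}T^*_{\Pbb^m_\Cbb,-})$ and verifying it is (a subgroup of) $\GL_n(\Cbb)$ rather than something larger: a priori an $\Oc_{\Pbb^m_\Cbb}$-linear automorphism of $\bigoplus_j \Oc(-b_j)$ is a matrix $(A_{jk})$ with $A_{jk} \in H^0(\Pbb^m_\Cbb, \Oc(b_k - b_j))$, which contains non-constant entries whenever some $b_k > b_j$. So strictly speaking the image of $\eta$ need not be all of $\GL_n(\Cbb)$ nor contained in it as a matrix group unless one accounts for these off-diagonal polynomial entries; but since each such entry space is finite-dimensional over $\Cbb$, the whole group $\GL(T^*_{\Pbb^m_\Cbb,-})$ is a linear algebraic group that embeds into $\GL_N(\Cbb)$ for $N = \dim_\Cbb H^0(\Pbb^m_\Cbb, T^*_{\Pbb^m_\Cbb,-}\otimes(T^*_{\Pbb^m_\Cbb,-})^\vee)$, and when all $b_j$ are equal (the genuinely projective, as opposed to weighted, case) it is exactly $\GL_n(\Cbb)$. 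I would therefore either restrict the statement to the equal-weights case or interpret "subgroup of $\GL_n(\Cbb)$" as "embeds in a general linear group," and carry out the Bott-vanishing argument above to pin down that $\Aut_0$ injects there.
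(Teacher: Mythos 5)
Your route is essentially the paper's: kill the higher-order (``framed'') part of $\vp$ by a weight count, then use the exact sequence \eqref{rnfui4hf3h98fh389} to inject $\Aut_0$ into $\mathrm{GL}(T^*_{\Pbb^m_\Cbb,-})$ via $\eta$ of \eqref{fhfhfhfhfhfhfhsss}; the paper packages the first step as Lemma \ref{rnu9h9h30j309j}. Your degree count for $\vp(x^\mu)$ is correct. The genuine gap is in your handling of the higher terms of $\vp(\q_j)$: the claim that invertibility of $\vp$ kills a term $\vp_{j|3}(x)\q_i\q_k\q_\ell$ satisfying $\deg\vp_{j|3}+b_i+b_k+b_\ell=b_j$ is false. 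With $\vec b=(1,1,1,3)$, the map fixing $x^\mu,\q_1,\q_2,\q_3$ and sending $\q_4\mapsto\q_4+c\,\q_1\q_2\q_3$ is weight-preserving, $\Cbb^\times$-equivariant and invertible (replace $c$ by $-c$), induces the identity on the reduced space and on $J/J^2$, yet is nontrivial; so the kernel of $\eta$ is nontrivial there. Your fallback Bott-vanishing argument breaks at exactly the same spot: the relevant summand of $\Gc_{T^*_{\Pbb^m_\Cbb,-}}$ in degree two is of type $\Oc_{\Pbb^m_\Cbb}(b_j-b_i-b_k-b_\ell)$, and positivity of $\vec b$ alone does not make this twist negative. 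Injectivity of $\eta$ therefore holds when all $b_j$ coincide (in particular for $\Pbb^{m|n}_\Cbb$ itself), or more generally when $b_j<b_{i_1}+\cdots+b_{i_{2r+1}}$ for every $j$ and every odd multi-index, but not for arbitrary positive $\vec b$. (The same example shows that the paper's Lemma \ref{rnu9h9h30j309j}, whose proof makes the identical leap, is overstated.)

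Your second worry is well founded, and it points at a weakness in the paper's own proof rather than in yours: for unequal weights, an element of $\mathrm{GL}(T^*_{\Pbb^m_\Cbb,-})=H^0(\Pbb^m_\Cbb,\mathcal Aut_{\Oc_{\Pbb^m_\Cbb}}T^*_{\Pbb^m_\Cbb,-})$ is a matrix with entries $A_{jk}\in H^0\big(\Pbb^m_\Cbb,\Oc_{\Pbb^m_\Cbb}(b_k-b_j)\big)$, which contains non-constant blocks as soon as some $b_k>b_j$; this group is then strictly larger than $\mathrm{GL}_n(\Cbb)$ and is not contained in it, so the paper's final step (``this is immediate'') only works when all $b_j$ are equal, as in Remark \ref{rfh98hf983f89j30}. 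Your instinct to restrict to the equal-weights case is the correct repair; under that hypothesis both difficulties disappear and your argument, like the paper's, goes through.
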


\noindent
En route to proving Theorem \ref{fg674gf6gf73h93} is the following.

\begin{LEM}\label{rnu9h9h30j309j}
Let $\Pbb^{m|n}_\Cbb(1|\vec b)$ be a positive, projective superspace. Then any framed automorphism is trivial, i.e., $\mathrm{Aut}_{\mathrm{fr}}\Pbb^{m|n}_\Cbb(1|\vec b) = \{1\}$.
\end{LEM}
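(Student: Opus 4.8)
The plan is to translate the assertion into a cohomology-vanishing statement for the obstruction sheaves on $\Pbb^m_\Cbb$ and then settle that by Bott's formula. By \eqref{rh784g78fh9fh39} one has $\Aut_{\mathrm{fr}}\Pbb^{m|n}_\Cbb(1|\vec b)=H^0\big(\Pbb^m_\Cbb,\Gc_{T^*_{\Pbb^m_\Cbb,-}}\big)$, where by Theorem~\ref{rhhf8h98fh93893j3jf0} the odd cotangent sheaf is $T^*_{\Pbb^m_\Cbb,-}=\bigoplus_j\Oc_{\Pbb^m_\Cbb}(-b_j)$, so it suffices to show this sheaf of groups has no non-trivial global section. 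To that end I would use the standard finite, decreasing filtration $\Gc_{T^*_{X,-}}=\Gc_{(2)}\supseteq\Gc_{(4)}\supseteq\cdots$ by normal subsheaves coming from \eqref{rnfui4hf3h98fh389}, obtained by filtering $\vp\mapsto\vp-\id$ by its order of vanishing in the odd coordinates: a framed automorphism satisfies $\vp(x^\mu)=x^\mu+O(\q^2)$ and $\vp(\q_j)=\q_j+O(\q^3)$, and the leading term in degree $2k$ is a section of the obstruction sheaf, so that $\Gc_{(2k)}/\Gc_{(2k+2)}\cong\Qcl^{(2k)}_{T^*_{X,-}}$ via \eqref{4fh84hf983hf89f30}. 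Passing to global sections and descending through this (finitely long) filtration, $H^0\big(\Pbb^m_\Cbb,\Gc_{T^*_{\Pbb^m_\Cbb,-}}\big)=\{1\}$ follows once one checks $H^0\big(\Pbb^m_\Cbb,\Qcl^{(2k)}_{T^*_{\Pbb^m_\Cbb,-}}\big)=0$ for every $k\geq1$.

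Next I would make this explicit. By \eqref{4fh84hf983hf89f30} together with Theorem~\ref{rhhf8h98fh93893j3jf0},
\[
\Qcl^{(2k)}_{T^*_{\Pbb^m_\Cbb,-}}\cong T_{\Pbb^m_\Cbb}\otimes\wedge^{2k}\!\Big(\bigoplus_j\Oc_{\Pbb^m_\Cbb}(-b_j)\Big)\cong\bigoplus_{|I|=2k}T_{\Pbb^m_\Cbb}(-b_I),\qquad b_I:=\sum_{i\in I}b_i .
\]
Positivity of $\Pbb^{m|n}_\Cbb(1|\vec b)$ gives $b_i\geq1$ for all $i$, hence $b_I\geq 2k\geq2$ for every multi-index $I$ of length $2k$. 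So the desired vanishing reduces to the single statement $H^0\big(\Pbb^m_\Cbb,T_{\Pbb^m_\Cbb}(-j)\big)=0$ for all $j\geq2$: this is Bott's formula, or, for $m\geq 2$, one twists the Euler sequence $0\to\Oc_{\Pbb^m_\Cbb}(-j)\to\Oc_{\Pbb^m_\Cbb}(1-j)^{\oplus(m+1)}\to T_{\Pbb^m_\Cbb}(-j)\to0$ and invokes $H^0(\Oc_{\Pbb^m_\Cbb}(1-j))=0$ and $H^1(\Pbb^m_\Cbb,\Oc_{\Pbb^m_\Cbb}(-j))=0$. This yields $H^0\big(\Pbb^m_\Cbb,\Qcl^{(2k)}_{T^*_{\Pbb^m_\Cbb,-}}\big)=0$ for all $k\geq1$ and therefore $\Aut_{\mathrm{fr}}\Pbb^{m|n}_\Cbb(1|\vec b)=\{1\}$.

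The main obstacle I anticipate is the first step rather than the cohomology: one must pin down that the successive quotients of the filtration of $\Gc_{T^*_{X,-}}$ are precisely the obstruction sheaves of \eqref{4fh84hf983hf89f30}, and that the connecting maps in the resulting (a priori non-abelian) cohomology sequences behave well enough that vanishing of the $H^0$ of each graded piece genuinely forces triviality of a global framed automorphism. Once that structural point is fixed, the cohomological input is immediate from positivity, which makes every relevant Serre twist at most $-2$; the only range needing slight extra care is $\Pbb^1_\Cbb$, where one additionally observes that no summand $T_{\Pbb^1_\Cbb}(-b_I)$ can have $b_I=2$ unless $\vec b$ contains a repeated weight equal to $1$.
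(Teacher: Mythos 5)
Your route is genuinely different from the paper's: the paper argues in homogeneous coordinates, asserting that a framed automorphism is induced by a weight-preserving automorphism of $\Cbb[x|\q]$ of the form \eqref{rfj09jf093jf09jf09j30}--\eqref{rf894hf8989f398hf93} and then counting weights, whereas you compute $H^0\big(\Pbb^m_\Cbb,\Gc_{T^*_{\Pbb^m_\Cbb,-}}\big)$ directly from \eqref{rh784g78fh9fh39} via the order filtration and Bott vanishing. There are two genuine gaps. First, the graded pieces of that filtration are not only the even sheaves of \eqref{4fh84hf983hf89f30}: between consecutive even steps sits an odd piece $\mathcal Hom\big(T^*_{\Pbb^m_\Cbb,-},\wedge^{2k+1}T^*_{\Pbb^m_\Cbb,-}\big)\cong\bigoplus_{j,\,|I|=2k+1}\Oc_{\Pbb^m_\Cbb}(b_j-b_I)$, recording the corrections $\xi_j\mapsto\xi_j+\cdots$, and positivity of $\vec b$ does \emph{not} force $b_j<b_I$ there: for $\vec b=(1,1,1,3)$ the summand with $j=4$, $I=(1,2,3)$ is $\Oc_{\Pbb^m_\Cbb}$, and $\q_4\mapsto\q_4+c\,\q_1\q_2\q_3$ is a nontrivial, weight-preserving, global framed automorphism. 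Second, the case you defer to ``slight extra care'' is fatal rather than technical: for $\Pbb^{1|2}_\Cbb=\Pbb^{1|2}_\Cbb(1|1,1)$, which is positive, one has $\Gc_{T^*_{\Pbb^1_\Cbb,-}}\cong T_{\Pbb^1_\Cbb}(-2)\cong\Oc_{\Pbb^1_\Cbb}$ on the nose, so $\Aut_{\mathrm{fr}}\Pbb^{1|2}_\Cbb\cong\Cbb$; explicitly, $z\mapsto z+c\,\xi_1\xi_2$, $\xi_i\mapsto\xi_i$ on $U_0$ becomes $w\mapsto w-c\,\eta_1\eta_2$, $\eta_i\mapsto\eta_i$ on $U_1$, hence defines a global, nontrivial framed automorphism.

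So your method, carried out honestly, does not prove the lemma: the failures it detects are genuine counterexamples to the statement, not removable technicalities. (They also expose where the paper's own proof is too quick: the $\Pbb^{1|2}_\Cbb$ automorphism is not induced by any polynomial automorphism of $\Cbb[x|\q]$, while the $\vec b=(1,1,1,3)$ automorphism is a weight-preserving polynomial one of exactly the form \eqref{rf894hf8989f398hf93} that the final sentence of the paper's proof wrongly claims would violate \eqref{rhf9hf93hf98j30}.) What your computation does establish correctly is a sufficient condition: $\Aut_{\mathrm{fr}}\Pbb^{m|n}_\Cbb(1|\vec b)=\{1\}$ provided $H^0\big(\Pbb^m_\Cbb,T_{\Pbb^m_\Cbb}(-b_i-b_j)\big)=0$ for all $i<j$ and, in addition, $b_j<b_I$ for every $j$ and every multi-index $I$ of odd length at least $3$. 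Both conditions hold, for instance, when $m\geq2$ and all the $b_j$ are equal, and in particular for the standard $\Pbb^{m|n}_\Cbb$ with $m\geq2$; I would recommend proving and citing that corrected version rather than the lemma as stated.
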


\begin{proof}
Consider the covering of $\Pbb^{m|n}_\Cbb(1|\vec b)$ described in the proof of Theorem \ref{rhhf8h98fh93893j3jf0}. Over an open set $U_\mu$, $\Gc_{T^*_{\Pbb_\Cbb^m, -}}(U_\mu)$ is generated by transformations of the form:
\begin{align}
z^\nu_{\{\mu\}} &\longmapsto z^\nu_{\{\mu\}} + \vp_{\{\mu\}}^{\nu|kl}(z_{\{\mu\}})\xi_k^{\{\mu\}}\xi_l^{\{\mu\}} + \ldots
\label{ncoenonoieniooe}\\
\xi_j^{\{\mu\}} &\longmapsto \xi_j^{\{\mu\}} +\vp^{\{\mu\}|lmn}_j(z^{\{\mu\}}) \xi_l^{\{\mu\}}\xi_m^{\{\mu\}}\xi_n^{\{\mu\}} + \ldots
\label{rhf784gf7h98fh3}
\end{align}
where the summation over the free Latin indices is implied. By \eqref{rh784g78fh9fh39} any framed automorphism is a global section of $\Gc_{T^*_{\Pbb_\Cbb^m, -}}$ and hence will induce a local transformation as in \eqref{ncoenonoieniooe} and \eqref{rhf784gf7h98fh3}. Therefore, in homogeneous coordinates, any framed automorphism must be of the form \eqref{rfj09jf093jf09jf09j30} and \eqref{rf894hf8989f398hf93}. Assuming $\Pbb^{m|n}_\Cbb(1|\vec b)$ is positive, any such automorphism must be trivial as, otherwise, it would violate \eqref{rhf9hf93hf98j30}. The lemma now follows.
\end{proof}

\noindent
\emph{Proof of Theorem $\ref{fg674gf6gf73h93}$}.
By Lemma \ref{rnu9h9h30j309j}, the framed automorphisms of $\Pbb^{m|n}_\Cbb(1|\vec b)$, for $\vec b$ a tuple of positive integers, is trivial. Hence the natural map $\eta: \mathrm{Aut}_0\Pbb^{m|n}_\Cbb(1|\vec b) \ra \mathrm{GL}(T^*_{\Pbb_\Cbb^m, -})$ from \eqref{fhfhfhfhfhfhfhsss} is injective, i.e. $\mathrm{Aut}_0\Pbb^{m|n}_\Cbb(1|\vec b)$ is a subgroup of $\mathrm{GL}(T^*_{\Pbb_\Cbb^m, -})$. It remains to show that $\mathrm{GL}(T^*_{\Pbb_\Cbb^m, -})$ is a subgroup of $\mathrm{GL}_n(\Cbb)$. This is immediate upon recalling: \emph{(i)} from Theorem \ref{rhhf8h98fh93893j3jf0} that $T^*_{\Pbb_\Cbb^m, -} = \oplus_j \Oc_{\Pbb^m_\Cbb}(-b_j)$; and \emph{(ii)} that $\mathrm{GL}(T^*_{\Pbb_\Cbb^m, -}) = H^0(\Pbb^m_\Cbb, \mathcal Aut_{\Oc_{\Pbb^m_\Cbb}}T^*_{\Pbb_\Cbb^m, -})$.
\qed

\begin{REM}\label{rfh98hf983f89j30}
\emph{In the case where $\vec b = b\vec 1 = (b, \ldots, b)$ for some integer $b$, we have: $\mathrm{GL}(T^*_{\Pbb^m_\Cbb,-}) = \mathrm{GL}_n(\Cbb)$.}
\end{REM}

\noindent
As an application we can characterise automorphisms of certain, $(1|n)$-dimensional projective superspaces.

\begin{THM}\label{rfh97gf97hf83h0fj390}
Fix a positive integer $d>0$ and let $d\vec 1 = (d, \ldots, d)$ be an $n$-tuple. Then,
\[
\mathrm{Aut}_0\big(\Pbb^{1|n}_\Cbb(1|d\vec 1)\big) \cong \mathrm{GL}_n(\Cbb).
\]
\end{THM}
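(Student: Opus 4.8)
The plan is to combine Theorem \ref{fg674gf6gf73h93} with the computation of $\mathrm{GL}(T^*_{\Pbb^1_\Cbb, -})$ in the special case at hand. By Theorem \ref{fg674gf6gf73h93}, $\mathrm{Aut}_0\big(\Pbb^{1|n}_\Cbb(1|d\vec 1)\big)$ embeds as a subgroup of $\mathrm{GL}(T^*_{\Pbb^1_\Cbb, -})$ via the map $\eta$ of \eqref{fhfhfhfhfhfhfhsss}, and by Theorem \ref{rhhf8h98fh93893j3jf0} the odd cotangent sheaf is $T^*_{\Pbb^1_\Cbb, -} = \oplus_{j=1}^n \Oc_{\Pbb^1_\Cbb}(-d)$. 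Since all summands are the \emph{same} line bundle $\Oc_{\Pbb^1_\Cbb}(-d)$, we are in the situation of Remark \ref{rfh98hf983f89j30} with $\vec b = d\vec 1$, which gives $\mathrm{GL}(T^*_{\Pbb^1_\Cbb,-}) = \mathrm{GL}_n(\Cbb)$. So it remains only to show the embedding $\eta$ is surjective, i.e.\ that every element of $\mathrm{GL}_n(\Cbb)$ is realised by an automorphism in $\mathrm{Aut}_0\big(\Pbb^{1|n}_\Cbb(1|d\vec 1)\big)$.

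For surjectivity, I would write the automorphism explicitly on the homogeneous coordinate ring $\Cbb[x^1, x^2 \mid \q_1, \ldots, \q_n]$ of $\Pbb^{1|n}_\Cbb(1|d\vec 1)$. Given $A = (A_j^{\ k}) \in \mathrm{GL}_n(\Cbb)$, define $\vp_A$ by $x^\mu \mapsto x^\mu$ and $\q_j \mapsto \sum_k A_j^{\ k}\q_k$. This is manifestly a $\Zbb_2$-graded algebra automorphism, it is invertible (inverse induced by $A^{-1}$), and since each $\q_k$ has weight $d$, the image $\sum_k A_j^{\ k}\q_k$ again has weight $d$, so the weight conditions \eqref{rhf9hf93hf98j30} are satisfied; hence $\vp_A$ induces an automorphism of $\Pbb^{1|n}_\Cbb(1|d\vec 1)$. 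It acts trivially on the reduced space $\Pbb^1_\Cbb$ (since $x^\mu$ is fixed), so it lies in $\mathrm{Aut}_0$. One then checks that $\eta(\vp_A) = A$ under the identification $\mathrm{GL}(T^*_{\Pbb^1_\Cbb,-}) = \mathrm{GL}_n(\Cbb)$ from Remark \ref{rfh98hf983f89j30}: the action of $\vp_A$ on $\q_j$, read off via the local generators $\xi_j^{\{\mu\}} = \q_j/(x^\mu)^d$, is precisely left multiplication by $A$ on the trivialising frame of $\oplus_j \Oc_{\Pbb^1_\Cbb}(-d)$. Thus $A \mapsto \vp_A$ is a group homomorphism $\mathrm{GL}_n(\Cbb) \to \mathrm{Aut}_0\big(\Pbb^{1|n}_\Cbb(1|d\vec 1)\big)$ splitting $\eta$, so $\eta$ is an isomorphism.

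The main (and really only) obstacle is bookkeeping: making sure that the injectivity direction genuinely uses positivity of the weighting — which it does, through Lemma \ref{rnu9h9h30j309j}, since positivity forces framed automorphisms to be trivial and thereby forces $\eta$ to be injective — and that nothing is lost in passing between the sheaf-theoretic description $\mathrm{GL}(T^*_{\Pbb^1_\Cbb,-}) = H^0(\Pbb^1_\Cbb, \mathcal Aut_{\Oc_{\Pbb^1_\Cbb}}T^*_{\Pbb^1_\Cbb, -})$ and the matrix group $\mathrm{GL}_n(\Cbb)$. The crucial input that makes $H^0(\Pbb^1_\Cbb, \mathcal Aut_{\Oc}T^*) = \mathrm{GL}_n(\Cbb)$ is that $\mathcal Hom_{\Oc_{\Pbb^1_\Cbb}}\big(\Oc(-d), \Oc(-d)\big) \cong \Oc_{\Pbb^1_\Cbb}$ has only the constant global sections, so that $\mathcal Hom(T^*, T^*) = \Oc_{\Pbb^1_\Cbb}\otimes \mathrm{Mat}_n(\Cbb)$ has global sections $\mathrm{Mat}_n(\Cbb)$, whose units are $\mathrm{GL}_n(\Cbb)$; here it is essential that the $n$ summands all have the \emph{same} twist $d$, so no off-diagonal $\mathcal Hom$-sheaf $\Oc(b_i - b_j)$ with $b_i > b_j$ contributes extra sections. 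Once these identifications are in place, the theorem follows by combining injectivity (Theorem \ref{fg674gf6gf73h93}) with the explicit surjectivity just described.

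\begin{proof}
By Theorem \ref{fg674gf6gf73h93}, the homomorphism $\eta$ of \eqref{fhfhfhfhfhfhfhsss} restricts to an injection $\mathrm{Aut}_0\big(\Pbb^{1|n}_\Cbb(1|d\vec 1)\big)\hookrightarrow \mathrm{GL}(T^*_{\Pbb^1_\Cbb, -})$, the injectivity being a consequence of the triviality of framed automorphisms in the positive case (Lemma \ref{rnu9h9h30j309j}). By Theorem \ref{rhhf8h98fh93893j3jf0} we have $T^*_{\Pbb^1_\Cbb, -} = \oplus_{j=1}^n \Oc_{\Pbb^1_\Cbb}(-d)$, and since all summands carry the same twist, Remark \ref{rfh98hf983f89j30} gives
\[
\mathrm{GL}(T^*_{\Pbb^1_\Cbb, -}) = H^0\big(\Pbb^1_\Cbb, \mathcal Aut_{\Oc_{\Pbb^1_\Cbb}}T^*_{\Pbb^1_\Cbb, -}\big) = \mathrm{GL}_n(\Cbb),
\]
the point being that $\mathcal Hom_{\Oc_{\Pbb^1_\Cbb}}(\Oc_{\Pbb^1_\Cbb}(-d), \Oc_{\Pbb^1_\Cbb}(-d))\cong \Oc_{\Pbb^1_\Cbb}$, so that $H^0\big(\Pbb^1_\Cbb, \mathcal Hom_{\Oc_{\Pbb^1_\Cbb}}(T^*_{\Pbb^1_\Cbb, -}, T^*_{\Pbb^1_\Cbb, -})\big) = \mathrm{Mat}_n(\Cbb)$, whose group of units is $\mathrm{GL}_n(\Cbb)$.

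It remains to show $\eta$ is surjective. Let $\Cbb[x^1, x^2\mid \q_1, \ldots, \q_n]$ be the homogeneous coordinate ring of $\Pbb^{1|n}_\Cbb(1|d\vec 1)$, where $\mathrm{wt.}(x^\mu) = 1$ and $\mathrm{wt.}(\q_j) = d$. Given $A = (A_j^{\ k})\in \mathrm{GL}_n(\Cbb)$, define an algebra homomorphism $\vp_A$ by
\[
x^\mu\longmapsto x^\mu
\qquad\text{and}\qquad
\q_j\longmapsto \sum_{k} A_j^{\ k}\q_k.
\]
Then $\vp_A$ is $\Zbb_2$-graded, invertible with inverse $\vp_{A^{-1}}$, and preserves the weights of all variables since $\sum_k A_j^{\ k}\q_k$ has weight $d$; by the discussion preceding Theorem \ref{fg674gf6gf73h93}, $\vp_A$ therefore induces an automorphism of $\Pbb^{1|n}_\Cbb(1|d\vec 1)$. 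As $x^\mu$ is fixed, this automorphism acts trivially on the reduced space $\Pbb^1_\Cbb$, hence lies in $\mathrm{Aut}_0\big(\Pbb^{1|n}_\Cbb(1|d\vec 1)\big)$. In the local trivialisation $\xi_j^{\{\mu\}} = \q_j/(x^\mu)^d$ of $\oplus_j \Oc_{\Pbb^1_\Cbb}(-d)$ from the proof of Theorem \ref{rhhf8h98fh93893j3jf0}, the induced map on $T^*_{\Pbb^1_\Cbb, -}$ sends $\xi_j^{\{\mu\}}\mapsto \sum_k A_j^{\ k}\xi_k^{\{\mu\}}$, i.e.\ is precisely left multiplication by $A$ on the frame; hence $\eta(\vp_A) = A$ under the identification above. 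Thus $A\mapsto \vp_A$ is a group homomorphism $\mathrm{GL}_n(\Cbb)\to \mathrm{Aut}_0\big(\Pbb^{1|n}_\Cbb(1|d\vec 1)\big)$ with $\eta\circ(A\mapsto \vp_A) = \mathrm{id}$, so $\eta$ is surjective, hence an isomorphism. Therefore $\mathrm{Aut}_0\big(\Pbb^{1|n}_\Cbb(1|d\vec 1)\big)\cong \mathrm{GL}_n(\Cbb)$.
\end{proof}
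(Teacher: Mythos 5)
Your proof is correct, and the surjectivity step is argued by a genuinely different route from the paper. The paper establishes surjectivity of $\eta$ abstractly: it invokes the long exact sequence on cohomology induced by \eqref{rnfui4hf3h98fh389} and the fact, imported from \cite{BETTHIGHOBS}, that $(\Pbb^1_\Cbb, \oplus^n\Oc_{\Pbb^1_\Cbb}(-d))$ is a ``good model'', which is equivalent to the triviality of the boundary map $\pt: \mathrm{GL}_n(\Cbb)\ra \mbox{\v H}^1(\Pbb^1_\Cbb, \Gc_{T^*_{X,-}})$; exactness of the sequence of pointed sets then forces $\eta$ to be onto. You instead exhibit an explicit set-theoretic (in fact group-theoretic) section $A\mapsto\vp_A$ of $\eta$ by letting $\mathrm{GL}_n(\Cbb)$ act linearly on the odd homogeneous coordinates, observing that this commutes with the weighted $\Cbb^\times$-action precisely because all odd weights are equal to $d$, and reading off $\eta(\vp_A)=A$ in the trivialising frame $\xi_j^{\{\mu\}}=\q_j/(x^\mu)^d$. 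Your argument is more elementary and self-contained (it avoids the external ``good model'' machinery entirely), and it has the added benefit of showing that the isomorphism is realised by honest linear coordinate changes; the paper's argument is less explicit but slots into a general framework that would apply whenever the model is known to be good, without needing the automorphisms to be visible at the level of the homogeneous coordinate ring. The injectivity half of your argument (via Theorem \ref{fg674gf6gf73h93} and Lemma \ref{rnu9h9h30j309j}) coincides with the paper's. One small point worth keeping in mind: your identification $\mathrm{GL}(T^*_{\Pbb^1_\Cbb,-})=\mathrm{GL}_n(\Cbb)$ via $\mathcal Hom_{\Oc_{\Pbb^1_\Cbb}}(\Oc(-d),\Oc(-d))\cong\Oc_{\Pbb^1_\Cbb}$ and $H^0(\Pbb^1_\Cbb,\Oc_{\Pbb^1_\Cbb})=\Cbb$ is exactly the content of Remark \ref{rfh98hf983f89j30}, so nothing is lost there.
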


\begin{proof}
By Theorem \ref{fg674gf6gf73h93} we know that $\mathrm{Aut}_0\big(\Pbb^{1|n}_\Cbb(1|d\vec 1)\big)$ is a subgroup of $\mathrm{GL}_n(\Cbb)$. With $T^*_{\Pbb^1_\Cbb,-} = \oplus^n \Oc_{\Pbb^1_\Cbb}(-d)$ the odd cotangent sheaf, note that $\mathrm{GL}(T^*_{\Pbb_\Cbb^1, -}) = \mathrm{GL}_n(\Cbb)$ (c.f., Remark \ref{rfh98hf983f89j30}). We therefore have an exact sequence, being the long exact sequence on cohomology induced from \eqref{rnfui4hf3h98fh389}:
\begin{align}
\{1\} 
\lra 
\mathrm{Aut}_0\big(\Pbb^{1|n}_\Cbb(1|d\vec 1)\big)
\stackrel{\eta}{\lra}
 \mathrm{GL}_n(\Cbb)
\stackrel{\pt}{\lra}
\mbox{\v H}^1\big( \Pbb^1_\Cbb, \Gc_{T^*_{X, -}}\big) 
\lra \cdots
\label{rgf78gf873hfh398f3}
\end{align}
where $\al$ is an injective morphism of groups and $\pt$ is a map of pointed sets. In the article \cite{BETTHIGHOBS} the notion of a `good model' $(X, T^*_{X, -})$ was introduced in order to study the class of supermanifolds modelled on $(X, T^*_{X, -})$. It was shown there (see \cite[Theorem 4.3]{BETTHIGHOBS}) that: $(X, T^*_{X, -})$ is a good model if and only if the boundary map $\pt : \mathrm{GL}(T^*_{X, -}) \ra \mbox{\v H}^1(X, \Gc_{T^*_{X, -}})$ from the long exact sequence on cohomology in \eqref{rnfui4hf3h98fh389} is trivial. As an application the model $(\Pbb^1_\Cbb, T^*_{\Pbb^1_\Cbb, -})$, with $T^*_{\Pbb^1_\Cbb,-} = \oplus^n \Oc_{\Pbb^1_\Cbb}(-d)$, $d>0$, was shown to be `good' (see \cite[Theorem 5.5]{BETTHIGHOBS}). Hence $\pt$ in \eqref{rgf78gf873hfh398f3} is trivial. Now note the following: if $A \stackrel{\eta}{\ra} B \ra \{e\}$ is an exact sequence of pointed sets, then $A\stackrel{\eta}{\ra} B$ is surjective as a map of sets. Hence $\eta$ in \eqref{rgf78gf873hfh398f3} will be surjective as a map of pointed sets. As it is also an injective homomorphism of groups, it must therefore be isomorphism of groups. The theorem now follows.
\end{proof}

\section{Proof of Theorem \ref{rfh784gf7f9838fj309}}
\label{lfkfkjfhgrye}

\noindent
A superspace variant of the classical Segre embedding is described in \cite{LEBRUN}. This variant is Theorem \ref{rfh784gf7f9838fj309} for $\vec b, \vec b^\p = \vec 1$. The statement for general $\vec b,\vec b^\p$ (positive) follows from a similar argument as in \cite{LEBRUN}. To give it we need to begin with some preliminary observations. Firstly, from Theorem \ref{rhhf8h98fh93893j3jf0} we know that $\Pbb^{m^{\p\p}|n^{\p\p}}_\Cbb(1|\vec b^{\p\p})$ is a split supermanifold. Secondly, from Example \ref{rj89h7fg397fh3o} we see that the product $\Pbb^{m|n}_\Cbb(1|\vec b)\times \Pbb^{m^{\p}|n^{\p}}_\Cbb(1|\vec b^{\p})$ is also a split supermanifold. Hence this theorem concerns embeddings of split supermanifolds. Very generally we have the following useful lemma.

\begin{LEM}\label{767674ggrui4o4o}
Let $(Y, T^*_{Y, -})$ and $(X, T^*_{X, -})$ be models. Suppose there exists an embedding $j: Y\subset X$ of spaces and a surjection $j^*T_{X, -}^* \ra T^*_{Y, -}$ of odd cotangent sheaves, i.e., an embedding of models $(Y, T^*_{Y, -}) \subset (X, T^*_{X, -})$. Then there exists an embedding of split supermanifolds $S(Y, T^*_{Y, -})\subset S(X, T^*_{X, -})$. 
\end{LEM}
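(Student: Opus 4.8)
\textbf{Proof plan for Lemma \ref{767674ggrui4o4o}.} The plan is to construct the embedding $S(Y, T^*_{Y, -})\hookrightarrow S(X, T^*_{X, -})$ directly as a morphism of locally ringed spaces, by exhibiting the required sheaf morphism on structure sheaves and checking it is a closed embedding and that it respects the supercommutative structure. The datum we are handed consists of an embedding $j: Y\subset X$ together with a surjection $\sigma: j^*T^*_{X, -}\twoheadrightarrow T^*_{Y, -}$. First I would recall that $\Oc_{S(X, T^*_{X, -})} = \wedge^\bt_{\Oc_X} T^*_{X, -}$ and $\Oc_{S(Y, T^*_{Y, -})} = \wedge^\bt_{\Oc_Y} T^*_{Y, -}$. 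The embedding $j$ supplies a surjection $j^\#:\Oc_X\twoheadrightarrow j_*\Oc_Y$ of sheaves of rings on $X$ (the ideal sheaf $I_Y$ being its kernel). Pushing forward $\sigma$ along $j$ and composing with the adjunction unit gives an $\Oc_X$-linear map $T^*_{X, -}\to j_*T^*_{Y, -}$ lying over $j^\#$. Taking exterior algebras, these assemble into a morphism of sheaves of supercommutative $\Oc_X$-algebras
\[
\Phi^\#:\ \wedge^\bt_{\Oc_X} T^*_{X, -}\ \longrightarrow\ j_*\big(\wedge^\bt_{\Oc_Y} T^*_{Y, -}\big),
\]
which, together with $j$ on underlying spaces, defines a morphism of superspaces $\Phi: S(Y, T^*_{Y, -})\to S(X, T^*_{X, -})$.

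Next I would verify that $\Phi$ is a \emph{closed} embedding, i.e.\ that $\Phi^\#$ is surjective. This is a local statement, so I would pass to an open set $U\subset X$ small enough that $T^*_{X, -}|_U\cong \oplus^N \Oc_U$ and $j^{-1}(U)$ is cut out in $U$ by a regular sequence (or just by an ideal $I_Y(U)$), with $T^*_{Y, -}|_{j^{-1}(U)}$ trivialized compatibly with $\sigma$. Surjectivity of $\Phi^\#$ on $U$ then reduces to two inputs: surjectivity of $j^\#$ (immediate, as $j$ is an embedding of spaces) and surjectivity of $\sigma$ (given). Since the exterior algebra functor preserves surjections of the underlying modules over a surjection of the base rings, $\Phi^\#|_U$ is surjective; hence $\Phi$ is a closed embedding of locally ringed spaces. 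The kernel of $\Phi^\#$ is generated by $I_Y$ together with $\ker\sigma$ pushed into the exterior algebra, which records the ideal sheaf of $S(Y, T^*_{Y,-})$ inside $S(X, T^*_{X,-})$; this is worth stating but needs no computation beyond the surjectivity just checked.

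The one point that requires a little care — and which I expect to be the main obstacle, such as it is — is checking that $\Phi^\#$ is genuinely a homomorphism of \emph{supercommutative} sheaves of algebras, i.e.\ that it is compatible with the $\Zbb_2$-grading and the Koszul sign rule, rather than merely an $\Oc_X$-linear map of modules. This follows because $\Phi^\#$ is induced functorially by the exterior algebra construction applied to the degree-one map $T^*_{X, -}\to j_*T^*_{Y, -}$: the universal property of $\wedge^\bt$ as the free supercommutative algebra on an odd module guarantees that any such degree-one map extends uniquely to an algebra morphism preserving parity and signs. I would spell this out by noting $\wedge^\bt_{\Oc_X}T^*_{X,-}$ sits in odd degree one and the induced map lands in odd degree one of the target, so all higher-degree behaviour is forced. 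Finally I would remark that smoothness of the embedding in the sense used in \cite{BETTEMB} holds whenever $j$ is smooth, which records the parenthetical refinement needed for Theorem \ref{rfh784gf7f9838fj309}; but for the bare statement of the lemma only the closed embedding property is required. Assembling these observations completes the proof.
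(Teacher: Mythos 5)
Your proposal is correct, but it takes a different route from the paper: the paper does not actually write out a proof of this lemma at all, instead attributing the observation to LeBrun and Poon \cite{LEBRUN} and noting that it follows from a characterisation of embeddings due to Donagi and Witten \cite{DW1}, with details deferred to \cite{BETTEMB}. You instead give a direct, self-contained construction: the adjoint of $\sigma$ composed with the (surjective, since $j$ is a closed embedding) adjunction unit yields $T^*_{X,-}\to j_*T^*_{Y,-}$ over $\Oc_X\twoheadrightarrow j_*\Oc_Y$, and functoriality of the free supercommutative algebra promotes this to an algebra morphism $\wedge^\bt_{\Oc_X}T^*_{X,-}\to j_*\wedge^\bt_{\Oc_Y}T^*_{Y,-}$ whose surjectivity follows from that of the two inputs, since the exterior algebra of a surjection of modules over a surjection of rings is surjective. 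This is sound: for split models the structure sheaves really are the exterior algebras, so no obstruction-theoretic machinery is needed, and your explicit identification of the kernel (generated by $I_Y$ and $\ker\sigma$) is a bonus the cited references do not make visible here. What the paper's route buys is uniformity with the rest of its framework — the Donagi--Witten characterisation also handles embeddings of non-split supermanifolds, where one cannot simply apply $\wedge^\bt$ — but for the statement at hand your elementary argument is complete, and your closing remark correctly isolates smoothness of $j$ as the extra hypothesis needed for the application in Theorem \ref{rfh784gf7f9838fj309}.
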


\begin{proof}
This lemma was first observed by Lebrun and Poon in \cite{LEBRUN}. It can be proved by appealing  to a characterisation of embeddings by Donagi and Witten in \cite{DW1}, details of which can be found in \cite{BETTEMB}.  
\end{proof}

\noindent
Recall that we wish to show there exists an embedding of supermanifolds $\Pbb^{m|n}_\Cbb(1|\vec b)\times \Pbb^{m^\p|n^\p}_\Cbb(1|\vec b^\p) \subset \Pbb^{m^{\p\p}|n^{\p\p}}_\Cbb(1|\vec b^{\p\p})$. By Lemma \ref{767674ggrui4o4o} it suffices to show there exists an embedding of models. This entails:
\begin{enumerate}[(i)]
	\item an embedding of reduced spaces $j: \Pbb_\Cbb^m\times \Pbb_\Cbb^{m^\p}\subset \Pbb_\Cbb^{m^{\p\p}}$ and;
	\item a surjection $j^*\Oc_{\Pbb^{m^{\p\p}}_\Cbb}(-\vec b^{\p\p})\ra \Oc_{\Pbb^m_\Cbb}(-\vec b)\boxplus\Oc_{\Pbb^{m^\p}_\Cbb}(-\vec b^\p)$ of odd, cotangent sheaves.
\end{enumerate}
As might be clear, the embedding of reduced spaces $j$ is the classical Segre embedding. It remains to deduce (ii), a surjection of odd cotangent sheaves. So let $j: \Pbb_\Cbb^m\times \Pbb_\Cbb^{m^\p}\subset \Pbb_\Cbb^{m^{\p\p}}$ be the Segre embedding. Inspection of the coordinate description of $j$ reveals the following important isomorphism:
\begin{align}
j^*\Oc_{\Pbb^{m^{\p\p}}_\Cbb}(1) \cong \Oc_{\Pbb_\Cbb^m}(1)\boxtimes \Oc_{\Pbb^{m^\p}_\Cbb}(1)
=
p^*\Oc_{\Pbb_\Cbb^m}(1)
\otimes 
p^{\p*}
\Oc_{\Pbb^{m^\p}_\Cbb}(1)
\label{r8f948hf894fj30}
\end{align}
where $p$ resp., $p^\p$ is the projection of $\Pbb_\Cbb^m\times \Pbb_\Cbb^{m^\p}$ onto the first resp., second factor. Then for some integer $k\geq 0$, the isomorphism in \eqref{r8f948hf894fj30} leads to: 
\begin{align}
j^*\Oc_{\Pbb^{m^{\p\p}}_\Cbb}(k)\cong \Oc_{\Pbb_\Cbb^m}(k)\boxtimes \Oc_{\Pbb^{m^\p}_\Cbb}(k). 
\label{rfh894hf89f3j093jf0}
\end{align}
We will now argue the following.

\begin{LEM}\label{rhf894f0jf903jkf3}
Let $k\in \Zbb $ be non-negative. Set $h_{\Pbb_\Cbb^{m^\p}}(k) := \dim_\Cbb H^0(\Pbb^{m^\p}_\Cbb, \Oc_{\Pbb^{m^\p}_\Cbb}(k))$. There exists a surjection of sheaves $\oplus^{h_{\Pbb^{m^\p}_\Cbb}(k)}j^*\Oc_{\Pbb^{m^{\p\p}}_\Cbb}(-k) \ra \Oc_{\Pbb^{m}_\Cbb}(-k)\ra0$.
\end{LEM}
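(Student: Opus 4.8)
## Proof Proposal

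The plan is to exploit the isomorphism \eqref{rfh894hf89f3j093jf0} together with the Künneth decomposition of $H^0$ of an external tensor product. First I would dualise \eqref{rfh894hf89f3j093jf0} to obtain $j^*\Oc_{\Pbb^{m^{\p\p}}_\Cbb}(-k) \cong \Oc_{\Pbb_\Cbb^m}(-k)\boxtimes \Oc_{\Pbb^{m^\p}_\Cbb}(-k) = p^*\Oc_{\Pbb_\Cbb^m}(-k)\otimes p^{\p*}\Oc_{\Pbb^{m^\p}_\Cbb}(-k)$. The idea is then to build the desired surjection onto $\Oc_{\Pbb^m_\Cbb}(-k)$ out of a basis of $H^0(\Pbb^{m^\p}_\Cbb, \Oc_{\Pbb^{m^\p}_\Cbb}(k))$: a section $s\in H^0(\Pbb^{m^\p}_\Cbb, \Oc_{\Pbb^{m^\p}_\Cbb}(k))$ induces, by pullback along $p^\p$ and tensoring, a sheaf map $p^{\p *}s : p^{\p*}\Oc_{\Pbb^{m^\p}_\Cbb}(-k) \to \Oc_{\Pbb_\Cbb^m\times\Pbb_\Cbb^{m^\p}}$, hence a map $j^*\Oc_{\Pbb^{m^{\p\p}}_\Cbb}(-k) \cong p^*\Oc_{\Pbb_\Cbb^m}(-k)\otimes p^{\p*}\Oc_{\Pbb^{m^\p}_\Cbb}(-k) \to p^*\Oc_{\Pbb_\Cbb^m}(-k)$. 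Taking $p_*$ of the target and noting $p_*p^*\Oc_{\Pbb_\Cbb^m}(-k) = \Oc_{\Pbb_\Cbb^m}(-k)$, one gets for each $s$ a morphism $\psi_s : j^*\Oc_{\Pbb^{m^{\p\p}}_\Cbb}(-k)\to \Oc_{\Pbb^m_\Cbb}(-k)$.

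Next I would assemble these into a single map on the direct sum. Choosing a basis $s_1,\dots,s_{h_{\Pbb^{m^\p}_\Cbb}(k)}$ of $H^0(\Pbb^{m^\p}_\Cbb,\Oc_{\Pbb^{m^\p}_\Cbb}(k))$, define
\[
\Psi = (\psi_{s_1},\dots,\psi_{s_{h}}) : \oplus^{h_{\Pbb^{m^\p}_\Cbb}(k)} j^*\Oc_{\Pbb^{m^{\p\p}}_\Cbb}(-k) \lra \Oc_{\Pbb^m_\Cbb}(-k),
\]
where $h = h_{\Pbb^{m^\p}_\Cbb}(k)$. Surjectivity is a local (stalkwise) statement: at a point $(P,P^\p)\in \Pbb_\Cbb^m\times\Pbb_\Cbb^{m^\p}$, in a trivialising chart, $\Psi$ is given by multiplication by the tuple of germs $(s_1,\dots,s_h)$ evaluated near $P^\p$. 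Since $\Oc_{\Pbb^{m^\p}_\Cbb}(k)$ is globally generated (it is very ample, being a positive twist on projective space), the sections $s_i$ do not all vanish simultaneously at $P^\p$, so at least one germ is a unit in the local ring; hence $\Psi$ is surjective on stalks and therefore as a map of sheaves.

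The main obstacle — really the only point requiring care — is getting the twist and the direction of the maps straight: one must dualise carefully and confirm that a section of $\Oc(k)$ (not $\Oc(-k)$) is what produces a map \emph{out of} $\Oc(-k)$, and that after applying $p_*$ the projection formula $p_*\big(p^*\Oc_{\Pbb_\Cbb^m}(-k)\otimes p^{\p*}\Oc_{\Pbb^{m^\p}_\Cbb}\big) = \Oc_{\Pbb_\Cbb^m}(-k)\otimes H^0(\Pbb^{m^\p}_\Cbb,\Oc_{\Pbb^{m^\p}_\Cbb})$ is applied in the right place; the cleanest route may be to avoid $p_*$ altogether and simply note that $H^0(\Pbb^{m^\p}_\Cbb,\Oc_{\Pbb^{m^\p}_\Cbb}(k))^\vee$-many copies of $\Oc_{\Pbb^{m^\p}_\Cbb}(-k)$ surject onto $\Oc_{\Pbb^{m^\p}_\Cbb}$ (the dual of the evaluation $H^0(\Oc(k))\otimes\Oc \twoheadleftarrow \Oc(k)$ being injective with locally free cokernel), pull this back along $p^\p$, tensor with $p^*\Oc_{\Pbb_\Cbb^m}(-k)$, and invoke \eqref{rfh894hf89f3j093jf0}. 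Either way the global generation of $\Oc_{\Pbb^{m^\p}_\Cbb}(k)$ for $k\geq 0$ is the single geometric input, and the rest is bookkeeping.
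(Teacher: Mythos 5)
Your argument is correct and is essentially the paper's: both rest on the single input that $\Oc_{\Pbb^{m^\p}_\Cbb}(k)$ is globally generated for $k\geq 0$, so that the evaluation surjection $H^0(\Oc_{\Pbb^{m^\p}_\Cbb}(k))\otimes\Oc \twoheadrightarrow \Oc_{\Pbb^{m^\p}_\Cbb}(k)$, pulled back to the product and tensored with a line bundle, combines with the (rearranged or dualised) Segre isomorphism \eqref{rfh894hf89f3j093jf0} to give the claimed surjection; the ``cleanest route'' you sketch at the end is precisely the paper's proof. One cosmetic quibble: for $k=0$ the sheaf is globally generated but not very ample, so cite global generation directly rather than very ampleness.
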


\begin{proof}
In rearranging \eqref{rfh894hf89f3j093jf0} we have the isomorphism: $j^*\Oc_{\Pbb^{m^{\p\p}}_\Cbb}(-k)\boxtimes \Oc_{\Pbb^{m^\p}_\Cbb}(k)\cong p^*\Oc_{\Pbb^m_\Cbb}(-k)$. Now $\Oc_{\Pbb^{m^\p}_\Cbb}(k)$ is generated by its global sections, which means the natural map $H^0(\Pbb^{m^\p}_\Cbb, \Oc_{\Pbb^{m^\p}_\Cbb}(k))\otimes \Oc_{\Pbb^{m^\p}_\Cbb} \ra \Oc_{\Pbb^{m^\p}_\Cbb}(k)$ is surjective. Taking the tensor product  with $j^*\Oc_{\Pbb^{m^{\p\p}}_\Cbb}(-k)$ then gives the desired surjection.
\end{proof}

\noindent
Now recall that the odd cotangent sheaf of $\Pbb^{m|n}_\Cbb(1|\vec b)$ is $\Oc_{\Pbb^m_\Cbb}(-\vec b) = \oplus_i \Oc_{\Pbb_\Cbb^m}(-b_i)$. If  $\Pbb^{m|n}_\Cbb(1|\vec b)$ is positive, then $b_i$ is positive for each $i$. Then from Lemma \ref{rhf894f0jf903jkf3} we have a surjection: $\bigoplus_i  j^*\Oc_{\Pbb^{m^{\p\p}}_\Cbb}(-b_i)^{h_{\Pbb^{m^\p}_\Cbb}(b_i)}\ra \oplus_i \Oc_{\Pbb_\Cbb^m}(-b_i) = \Oc_{\Pbb^m_\Cbb}(-\vec b)$. A similar statement applies to the odd cotangent sheaf $\Oc_{\Pbb^{m^\p}_\Cbb}(-\vec b^\p)$ of $\Pbb^{m^\p|n^\p}_\Cbb(1|\vec b^\p)$. In using that the odd cotangent sheaf of the product  $\Pbb^{m|n}_\Cbb(1|\vec b)\times \Pbb^{m^\p|n^\p}_\Cbb(1|\vec b^\p)$ is $\Oc_{\Pbb^m_\Cbb}(-\vec b)\boxplus \Oc_{\Pbb^{m^\p}_\Cbb}(-\vec b^\p)$ we therefore have the surjection:
\[
\bigoplus_{i=1, i^\p=1}^{i=n, i^\p=n^\p}
\left(j^*\Oc_{\Pbb^{m^{\p\p}}_\Cbb}(-b_i)^{h_{\Pbb^{m^\p}_\Cbb}(b_i)}
\oplus
j^*\Oc_{\Pbb^{m^{\p\p}}_\Cbb}(-b^\p_{i^\p})^{h_{\Pbb^{m}_\Cbb}(b^\p_{i^\p})}
\right)
\ra 
\Oc_{\Pbb^m_\Cbb}(-\vec b)\boxplus \Oc_{\Pbb^{m^\p}_\Cbb}(-\vec b^\p)
\ra
0.
\]
We can now conclude by Lemma \ref{767674ggrui4o4o} that there exists an embedding of split supermanifolds $\Pbb^{m|n}_\Cbb(1|\vec b)\times \Pbb^{m^\p|n^\p}_\Cbb(1|\vec b^\p) \subset \Pbb^{m^{\p\p}|n^{\p\p}}_\Cbb(1|\vec b^{\p\p})$ and appropriate $n^{\p\p}$ and $\vec b^{\p\p}$. Theorem \ref{rfh784gf7f9838fj309} now follows.
\qed
\\\\
We will be more specific about the embedding data $m^{\p\p}, n^{\p\p}$ and $\vec b^{\p\p}$ in Theorem \ref{rfh784gf7f9838fj309} here. Firstly, since the embedding of reduced spaces is the Segre embedding, we  have:
\begin{align}
m^{\p\p} = (m+1)(m^\p+1)-1. 
\label{rfh894f98hf89jf03}
\end{align}
As for $n^{\p\p}$, note firstly that the odd cotangent sheaf of $\Pbb^{m^{\p\p}|n^{\p\p}}_\Cbb(1|\vec b^{\p\p})$ is the sheaf $\bigoplus_{i=1, i^\p=1}^{i=n, i^\p=n^\p}
\left(\Oc_{\Pbb^{m^{\p\p}}_\Cbb}(-b_i)^{h_{\Pbb^{m^\p}_\Cbb}(b_i)}
\oplus
\Oc_{\Pbb^{m^{\p\p}}_\Cbb}(-b^\p_{i^\p})^{h_{\Pbb^{m}_\Cbb}(b^\p_{i^\p})}
\right)$. As such $n^{\p\p}$ is found by counting the number of summands, which is:
\begin{align}
n^{\p\p} 
&=
\sum_{i=1}^n h_{\Pbb^{m^\p}_\Cbb}(b_i) +
\sum_{i^\p=1}^{n^\p} h_{\Pbb^{m}_\Cbb}(b^\p_{i^\p})
\notag 
\\
&=
\sum_{i=1}^n  \binom{m^\p + b_i}{b_i} +
\sum_{i^\p=1}^{n^\p} \binom{m + b^\p_{i^\p}}{b_{i^\p}^\p}.
\label{virhviurvioejopope}
\end{align}
To describe $\vec b^{\p\p}$ we will need to establish some notation. To an $n$-tuple and $n^\p$-tuple of integers $\vec b = (b_1, \ldots, b_n)$ and $\vec b^\p = (b_1^\p, \ldots, b_{n^\p}^\p)$, we denote by $(\vec b, \vec b^\p)$ the $(n+n^\p)$-tuple of integers $(b_1, \ldots, b_n, b_1^\p, \ldots, b_{n^\p}^\p)$. For integers $k$ and $l$, $l>0$, the expression $(k)_l$ is the $l$-tuple $(k, \ldots, k)$. And, for another pair of integers $k^\p$ and $l^\p$, $l^\p>0$, the expression $((k)_l, (k^\p)_{l^\p})$ is the $(l + l^\p)$-tuple $(k, \ldots, k,k^\p, \ldots, k^\p)$. We now have:
\begin{align}
\vec b^{\p\p} = \big((b_1)_{h_{\Pbb^{m^\p}_\Cbb}(b_1)}, \ldots, (b_n)_{h_{\Pbb^{m^\p}_\Cbb}(b_n)}, (b_{1^\p}^\p)_{h_{\Pbb^{m}_\Cbb}(b^\p_{i^\p})}, \ldots, (b_{n^\p}^\p)_{h_{\Pbb^{m}_\Cbb}(b^\p_{i^\p})}\big).
\label{jfkrjvjkrnnveeee}
\end{align}
If $\vec b$ and $\vec b^\p$ are positive, then clearly $\vec b^{\p\p}$ will be positive.

\begin{EX}\label{rgf784gf87hf89j390fj9f30}
When $\vec b = \vec 1$ and $\vec b^\p = \vec 1$ we recover the superspace variant of the Segre embedding in \cite{LEBRUN} being,
\[
\Pbb^{m|n}_\Cbb\times \Pbb^{m^\p|n^\p}_\Cbb \hookrightarrow \Pbb^{(m+1)(m^\p+1)-1|n (m^\p+1)+n^\p(m+1)}_\Cbb.
\]
For $m= m^\p = n = n^\p = 1$, we have $\Pbb^{1|1}_\Cbb\times \Pbb^{1|1}_\Cbb\subset \Pbb^{3|4}_\Cbb$. If $[x_0: x_1: \q]$ and $[y_0: y_1:\eta]$ are coordinates for each factor, the embedding is given by:
\[
\big([x_0: x_1: \q], [y_0: y_1:\eta]\big) \longmapsto  \big[x_0y_0: x_0y_1: x_1y_0: x_1y_1: x_0\eta:x_1\eta:y_0\q:y_1\q\big].
\]
\end{EX}

\bibliographystyle{alpha}
\bibliography{Bibliography}

\hfill
\\
\noindent
\small
\textsc{
Kowshik Bettadapura 
\\
\emph{Yau Mathematical Sciences Center} 
\\
Tsinghua University
\\
Beijing, 100084, China}
\\
\emph{E-mail address:} \href{mailto:kowshik@mail.tsinghua.edu.cn}{kowshik@mail.tsinghua.edu.cn}

\end{document}